\theoremstyle{definition}
\newtheorem{definition}{Definition}
\theoremstyle{theorem}
\newtheorem{proposition}[definition]{Proposition}
\newtheorem{theorem}[definition]{Theorem}
\newtheorem{corollary}[definition]{Corollary}
\theoremstyle{remark}
\newtheorem{remark}[definition]{Remark}
\numberwithin{equation}{section}
\numberwithin{definition}{section}
\def\PP{\mathsf P}
\def\ZZ{\mathbb Z}
\def\EE{\mathsf E}
\def\id{\mathrm{id}}
\def\RR{\mathbb R}
\def\PPm{\mathsf{P}^\natural}
\def\pr{\mathrm{pr}}
\def\Zh{\mathbb{Z}_h}
\def\sc{W}
\def\Zq{Z^{(q)}}
\def\Wq{W^{(q)}}
\def\Cr{\mathbb{C}^{\rightarrow}}
\def\Crr{\overline{\mathbb{C}^{\rightarrow}}}
\def\Cll{\overline{\mathbb{C}^{\leftarrow}}}
\def\Cd{\mathbb{C}^{\downarrow}}
\def\Cdd{\overline{\mathbb{C}^{\downarrow}}}
\def\Cuu{\overline{\mathbb{C}^{\uparrow}}}
\def\Exp{\mathrm{Exp}}
\def\geom{\mathrm{geom}}
\def\supp{\mathrm{supp}}
\def\measure{\lambda}
\begin{document}
\title{Fluctuation theory for upwards skip-free L\'evy chains}

\author{Matija Vidmar}
\address{Department of Mathematics, University of Ljubljana, Slovenia}
\email{matija.vidmar@fmf.uni-lj.si}

\thanks{The support of the Slovene Human Resources Development and Scholarship Fund under contract number 11010-543/2011 is acknowledged. I thank Andreas Kyprianou for suggesting to me some of the investigations in this paper.}

\begin{abstract}
A fluctuation theory and, in particular, a  theory of scale functions is developed for upwards skip-free L\'evy chains, i.e. for right-continuous random walks embedded into continuous time as compound Poisson processes. This is done by analogy to the spectrally negative class of L\'evy processes -- several results, however, can be made more explicit/exhaustive in our compound Poisson setting. In particular, the scale functions admit a linear recursion, of constant order when the support of the jump measure is bounded, by means of which they can be calculated -- some examples are considered.
\end{abstract}

\keywords{L\'evy processes, non-random overshoots, skip-free random walks, fluctuation theory, scale functions}

\subjclass[2010]{Primary: 60G51; Secondary: 60G50} 

\maketitle

\section{Introduction}\label{section:Introduction}
It was shown in \cite{vidmar} that precisely two types of L\'evy processes exhibit the property of non-random overshoots: those with no positive jumps a.s., and compound Poisson processes, whose jump chain is (for some $h>0$) a random walk on $\Zh:=\{hk\!\!:k\in\ZZ\}$, skip-free to the right. The latter class was then referred to as ``upwards skip-free L\'evy chains''. Also in the same paper it was remarked that this common property which the two classes share results in a more explicit fluctuation theory (including the Wiener-Hopf factorization) than for a general L\'evy process, this being rarely the case (cf. \cite [p. 172, Subsection 6.5.4]{kyprianou}).

Now, with reference to existing literature on fluctuation theory, the spectrally negative case (when there are no positive jumps, a.s.) is dealt with in detail in \cite[Chapter VII]{bertoin} \cite[Section 9.46]{sato} and especially \cite[Chapter 8]{kyprianou}. On the other hand no equally exhaustive treatment of the right-continuous random walk seems to have been presented thus far, but see \cite{quine,brown,marchal}  \cite[Section~4]{vylder} \cite[Section~7]{dickson} \cite[Section 9.3]{doney} \cite[\emph{passim}]{spitzer}. In particular, no such exposition appears forthcoming for the continuous-time analogue of such random walks, wherein the connection and analogy to the spectrally negative class of L\'evy processes becomes most transparent and direct. 

In the present paper we proceed to do just that, i.e. we develop, by analogy to the spectrally negative case, a complete fluctuation theory (including theory of scale functions) for upwards skip-free L\'evy chains. Indeed, the transposition of the results from the spectrally negative to the skip-free setting is mostly straightforward. Over and above this, however, and beyond what is purely analogous to the exposition of the spectrally negative case, (i) further specifics of the reflected process (Theorem~\ref{theorem:locals}\ref{USF:reflectedprocess}), of the excursions from the supremum (Theorem~\ref{theorem:locals}\ref{USF:excursion}) and of the inverse of the local time at the maximum (Theorem~\ref{theorem:locals}\ref{USF:localtime:further}) are identified, (ii) the class of subordinators that are the descending ladder heights processes of such upwards skip-free L\'evy chains is precisely characterized (Theorem~\ref{theorem:subordinator}), and (iii) a linear recursion is presented which allows us to directly compute the families of scale functions (Eq.~\eqref{equation:recursion:Wq}, \eqref{equation:recursion:Zq}, Proposition~\ref{proposition:calculation} and Corollary~\ref{proposition:calculating_scale_functions}).

Application-wise, note e.g. that the classical continuous-time Bienaym\'e-Galton-Watson branching process is associated to upwards skip-free L\'evy chains via a suitable time change \cite[Section~1.3.4]{kyprianou}. Upwards skip-free L\'evy chains  also feature as a natural continuous-time approximation of the more subtle spectrally negative family \cite{vidmar:scales}. 

The organisation of the rest of this paper is as follows. Section~\ref{section:Setting} introduces the setting and notation. Then Section~\ref{section:Fluctuation_theory} develops the relevant fluctuation theory, in particular details of the Wiener-Hopf factorization. Finally, Section~\ref{section:Theory_of_scale_functions} deals with the two-sided exit problem and the accompanying families of scale functions. 

\section{Setting and notation}\label{section:Setting}
Let $(\Omega,\mathcal{F},\mathbb{F}=(\mathcal{F}_t)_{t\geq 0},\PP)$ be a filtered probability space supporting a L\'evy process \cite[p. 2, Definition~1.1]{kyprianou} $X$ ($X$ is assumed to be $\mathbb{F}$-adapted and to have independent increments relative to $\mathbb{F}$). The L\'evy measure \cite[p. 38, Definition~8.2]{sato} of $X$ is denoted by $\measure$. Next, recall from \cite{vidmar} (with $\supp(\nu)$ denoting the support \cite[p. 9]{kallenberg} of a measure $\nu$ defined on the Borel $\sigma$-field of some topological space): 

\begin{definition}[Upwards skip-free L\'evy chain]
$X$ is an \emph{upwards skip-free L\'evy chain}, if it is a compound Poisson process \cite[p. 18, Definition~4.2]{sato}, and for some $h>0$, $\supp(\lambda)\subset \Zh$, whereas $\supp(\lambda\vert_{\mathcal{B}((0,\infty))})=\{h\}$. 
\end{definition}
In the sequel, $X$ will be assumed throughout an upwards skip-free L\'evy chain, with $\measure(\{h\})>0$ ($h>0$) and characteristic exponent $\Psi(p)=\int (e^{ipx}-1)\measure(dx)$ ($p\in\mathbb{R}$). In general, we insist on (i) every sample path of $X$ being c\`adl\`ag (i.e. right-continuous, admitting left limits) and (ii) $(\Omega,\mathcal{F},\mathbb{F},\PP)$ satisfying the standard assumptions  (i.e. the $\sigma$-field $\mathcal{F}$ is $\PP$-complete, the filtration $\mathbb{F}$ is right-continuous and $\mathcal{F}_0$ contains all $\PP$-null sets). Nevertheless, we shall, sometimes and then only provisionally, relax assumption (ii), by transferring $X$ as the coordinate process onto the canonical space $\mathbb{D}_h:=\{\omega\in \Zh^{[0,\infty)}:\omega\text{ is c\`adl\`ag}\}$ of c\`adl\`ag paths, mapping $[0,\infty)\to \Zh$, equipping $\mathbb{D}_h$ with the $\sigma$-algebra and natural filtration of evaluation maps; this, however, will always be made explicit. We allow $e_1$ to be exponentially distributed, mean one, and independent of $X$; then define $e_p:=e_1/p$ ($p\in (0,\infty)\backslash\{1\}$). 

Further, for $x\in\mathbb{R}$, introduce $T_x:=\inf \{t\geq 0:X_t\geq x\}$, the first entrance time of $X$ into $[x,\infty)$. Note that $T_x$ is an $\mathbb{F}$-stopping time \cite[p. 101, Theorem 6.7]{kallenberg}. The supremum or maximum (respectively infimum or minimum) process of $X$ is denoted  $\overline{X}_t:=\sup\{X_s:s\in [0,t]\}$ (respectively $\underline{X}_t:=\inf\{X_s:s\in [0,t]\}$) ($t\geq 0$). $\underline{X}_{\infty}:=\inf\{X_s:s\in [0,\infty)\}$ is the overall infimum. 

With regard to miscellaneous general notation we have: 
\begin{enumerate}
\item The nonnegative, nonpositive, positive and negative real numbers are denoted by $\mathbb{R}_+:=\{x\in\mathbb{R}:x\geq 0\}$, $\mathbb{R}_-:=\{x\in\mathbb{R}:x\leq 0\}$, $\mathbb{R}^+:= \mathbb{R}_+\backslash \{0\}$ and $\mathbb{R}^-:= \mathbb{R}_-\backslash \{0\}$, respectively. Then $\mathbb{Z}_+:=\mathbb{R}_+\cap\mathbb{Z}$, $\mathbb{Z}_-:=\mathbb{R}_-\cap\mathbb{Z}$, $\mathbb{Z}^+:=\mathbb{R}^+\cap\mathbb{Z}$ and $\mathbb{Z}^-:=\mathbb{R}^-\cap\mathbb{Z}$ are the nonnegative, nonpositive, positive and negative integers, respectively.
\item Similarly,  for $h>0$: $\Zh^+:=\Zh\cap\mathbb{R}_+$, $\Zh^{++}:=\Zh\cap\mathbb{R}^+$, $\Zh^-:=\Zh\cap \mathbb{R}_-$ and $\Zh^{--}:=\Zh\cap \mathbb{R} ^-$ are the apposite elements of $\Zh$. 
\item The following introduces notation for the relevant half-planes of $\mathbb{C}$; the arrow notation is meant to be suggestive of which half-plane is being considered: $\mathbb{C}^{\rightarrow}:=\{z\in\mathbb{C}:\Re z>0\}$, $\mathbb{C}^{\leftarrow}:=\{z\in\mathbb{C}:\Re z<0\}$, $\mathbb{C}^{\downarrow}:=\{z\in\mathbb{C}:\Im z<0\}$ and $\mathbb{C}^{\uparrow}:=\{z\in\mathbb{C}:\Im z>0\}$. $\Crr$, $\Cll$, $\Cdd$ and $\Cuu$ are then the respective closures of these sets. 
\item $\mathbb{N}=\{1,2,\ldots\}$ and $\mathbb{N}_0=\mathbb{N}\cup \{0\}$ are the positive and nonnegative integers, respectively. $\lceil x\rceil:=\inf\{k\in\mathbb{Z}: k\geq x\}$ ($x\in \mathbb{R}$) is the ceiling function. For $\{a,b\}\subset [-\infty,+\infty]$: $a\land b:=\min\{a,b\}$ and $a\lor b:=\max\{a,b\}$. 
\item The Laplace transform of a measure $\mu$ on $\mathbb{R}$, concentrated on $[0,\infty)$, is denoted $\hat{\mu}$: $\hat{\mu}(\beta)=\int_{[0,\infty)}e^{-\beta x}\mu(dx)$ (for all $\beta\geq 0$ such that this integral is finite). To a nondecreasing right-continuous function $F:\mathbb{R}\to\mathbb{R}$, a measure $dF$ may be associated in the Lebesgue-Stieltjes sense. 
\end{enumerate}
The geometric law $\geom(p)$ with success parameter $p\in (0,1]$ has $\geom(p)(\{k\})=p(1-p)^k$ ($k\in\mathbb{N}_0$), $1-p$ is then the failure parameter. The exponential law $\Exp(\beta)$ with parameter $\beta>0$ is specified by the density $\Exp(\beta)(dt)=\beta e^{-\beta t}\mathbbm{1}_{(0,\infty)}(t)dt$. A function $f:[0,\infty)\to [0,\infty)$ is said to be of exponential order, if there are $\{\alpha,A\}\subset \mathbb{R}_+$, such that $f(x)\leq Ae^{\alpha x}$ ($x\geq 0$); $f(+\infty):=\lim_{x\to\infty}f(x)$, when this limit exists. DCT (respectively MCT) stands for the dominated (respectively monotone) convergence theorem. Finally, increasing (respectively decreasing) will mean strictly increasing (respectively strictly decreasing), nondecreasing (respectively nonincreasing) being used for the weaker alternative; we will understand $a/0=\pm \infty$ for $a\in \pm(0,\infty)$. 

\section{Fluctuation theory}\label{section:Fluctuation_theory}
In the following, to fully appreciate the similarity (and eventual differences) with the spectrally negative case, the reader is invited to directly compare the exposition of this subsection with that of \cite[Section VII.1]{bertoin} and \cite[Section 8.1]{kyprianou}. 

\subsection{Laplace exponent, the reflected process, local times and excursions from the supremum, supremum process and long-term behaviour, exponential change of measure}
Since the Poisson process admits exponential moments of all orders, it follows that $\EE[e^{\beta \overline{X}_t}]<\infty$ and, in particular, $\EE[e^{\beta X_t}]<\infty$ for all $\{\beta,t\}\subset [0,\infty)$. Indeed, it may be seen by a direct computation that for $\beta\in \Crr$, $t\geq 0$, $\EE[e^{\beta X_t}]=\exp\{t\psi(\beta)\}$, where $\psi(\beta):=\int_{\mathbb{R}}(e^{\beta x}-1)\lambda(dx)$ is the Laplace exponent of $X$. Moreover, $\psi$ is continuous (by the DCT) on $\Crr$ and analytic in $\Cr$ (use the theorems of Cauchy \cite[p. 206, 10.13 Cauchy's theorem for triangle]{rudin}, Morera \cite[p. 209, 10.17 Morera's theorem]{rudin} and Fubini).


Next, note that $\psi(\beta)$ tends to $+\infty$ as $\beta\to\infty$ over the reals, due to the presence of the atom of $\lambda$ at $h$. Upon restriction to $[0,\infty)$, $\psi$ is strictly convex, as follows first on $(0,\infty)$ by using differentiation under the integral sign and noting that the second derivative is strictly positive, and then extends to $[0,\infty)$ by continuity. 

Denote then by $\Phi(0)$ the largest root of $\psi\vert_{[0,\infty)}$. Indeed, $0$ is always a root, and due to strict convexity, if $\Phi(0)>0$, then $0$ and $\Phi(0)$ are the only two roots. The two cases occur, according as to whether $\psi'(0+)\geq 0$ or $\psi'(0+)<0$, which is clear. It is less obvious, but nevertheless true, that this right derivative at $0$ actually exists, indeed $\psi'(0+)=\int_\mathbb{R}x\lambda(dx)\in [-\infty,\infty)$. This follows from the fact that $(e^{\beta x}-1)/\beta$ is nonincreasing as $\beta\downarrow 0$ for $x\in \mathbb{R}_-$ and hence monotone convergence applies. Continuing from this, and with a similar justification, one also gets the equality $\psi''(0+)=\int x^2\lambda(dx)\in (0,+\infty]$ (where we agree $\psi''(0+)=+\infty$ if $\psi'(0+)=-\infty$). In any case, $\psi:[\Phi(0),\infty)\to [0,\infty)$ is continuous and increasing, it is a bijection and we let $\Phi:[0,\infty)\to [\Phi(0),\infty)$ be the inverse bijection, so that $\psi\circ \Phi=\id_\mathbb{R_+}$.

With these preliminaries having been established, our first theorem identifies characteristics of the reflected process, the local time of $X$ at the maximum (for a definition of which see e.g. \cite[p. 140, Definition 6.1]{kyprianou}), its inverse, as well as the expected length of excursions and the probability of an infinite excursion therefrom (for definitions of these terms see e.g. \cite[pp. 140-147]{kyprianou}; we agree that an excursion (from the maximum) starts immediately $X$ leaves its running maximum and ends immediately it returns to it; by its length we mean the amount of time between these two time points).

\begin{theorem}[Reflected process; (inverse) local time; excursions]\label{theorem:locals} Let $q_n:=\measure(\{-nh\})/\measure(\mathbb{R})$ for $n\in \mathbb{N}$ and $p:=\measure(\{h\})/\measure(\mathbb{R})$.
\leavevmode
\begin{enumerate}[(i)]
\item\label{USF:reflectedprocess} The generator matrix $\tilde{Q}$ of the Markov process $Y:=\overline{X}-X$ on $\Zh^+$ is given by (with $\{s,s'\}\subset \Zh^+$): $\tilde{Q}_{ss'}=\measure(\{s-s'\})-\delta_{ss'}\measure(\mathbb{R})$, unless $s=s'=0$, in which case we have $\tilde{Q}_{ss'}=-\measure((-\infty,0))$.
\item\label{USF:localtime} For the reflected process $Y$, $0$ is a holding point. The actual time spent at $0$ by $Y$, which we shall denote $L$, is a local time at the maximum. Its right-continuous inverse $L^{-1}$, given by  $L^{-1}_t:=\inf\{s\geq 0:L_s>t\}$ (for $0\leq t<L_{\infty}$; $L_t^{-1}:=\infty$ otherwise), is then a (possibly killed) compound Poisson subordinator with unit positive drift. 
\item\label{USF:excursion} Assuming that $\lambda((-\infty,0))>0$ to avoid the trivial case, the expected length of an excursion away from the supremum is equal to $\frac{\lambda(\{h\})h-\psi'(0+)}{(\psi'(0+)\lor 0)\lambda((-\infty,0))}$; whereas the probability of such an excursion being infinite is $\frac{\measure(\{h\})}{\measure((-\infty,0))}(e^{\Phi(0)h}-1)=:p^*$. 
\item\label{USF:localtime:further} Assume again $\lambda((-\infty,0))>0$ to avoid the trivial case. Let $N$, taking values in $\mathbb{N}\cup \{+\infty\}$, be the number of jumps the chain makes before returning to its running maximum, after it has first left it (it does so with probability $1$). Then the law of $L^{-1}$ is given by (for $\theta\in [0,+\infty)$):
$$-\log \EE \left[\exp(-\theta L^{-1}_1)\mathbbm{1}_{\{L^{-1}_1<+\infty\}}\right]=\theta +\lambda((-\infty,0))\left( 1-\sum_{k=1}^\infty\PP(N=k)\left(\frac{\lambda(\mathbb{R})}{\lambda(\mathbb{R})+\theta}\right)^k\right).$$ In particular, $L^{-1}$ has a killing rate of $\lambda((-\infty,0))p^*$, L\'evy mass $\lambda((-\infty,0))(1-p^*)$ and its jumps have the probability law on $(0,+\infty)$ given by the length of a generic excursion from the supremum, conditional on it being finite, i.e. that of an independent $N$-fold sum of independent $\Exp(\lambda(\mathbb{R}))$-distributed random variables, conditional on $N$ being finite. Moreover, one has, for $k\in \mathbb{N}$, $\PP(N=k)=\sum_{l=1}^kq_lp_{l,k}$, where the coefficients $(p_{l,k})_{l,k=1}^\infty$ satisfy the initial conditions:
\begin{equation*}
p_{l,1}=p\delta_{l1},\quad l\in \mathbb{N};
\end{equation*}
the recursions:
\begin{equation*}
p_{l,k+1}=
\begin{cases}
0 & \text{if }l=k\text{ or }l>k+1\\
\sum_{m=1}^{k-1}q_mp_{m+1,k}&\text{if }l=1\\
p^{k+1}& \text{if }l=k+1\\
pp_{l-1,k}+\sum_{m=1}^{k-l}q_mp_{m+l,k}& \text{if }1<l<k
\end{cases},\qquad \{l,k\}\subset \mathbb{N};
\end{equation*}
and $p_{l,k}$ may be interpreted as the probability of $X$ reaching level $0$ starting from level $-lh$ for the first time on precisely the $k$-th jump ($\{l,k\}\subset \mathbb{N}$). 
\end{enumerate}
\end{theorem}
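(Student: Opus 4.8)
The plan is to read the law of $L^{-1}$ off an excursion decomposition of $Y=\overline X-X$ away from $0$. First I would record that, by part~\ref{USF:reflectedprocess}, the state $0$ is held for an $\Exp(\lambda((-\infty,0)))$ time (its exit rate being $-\tilde Q_{00}=\lambda((-\infty,0))$), while by part~\ref{USF:localtime} $L$ is exactly the occupation time of $0$; applying the strong Markov property at the successive returns of $Y$ to $0$ thus splits the trajectory into i.i.d.\ cycles, each an $\Exp(\lambda((-\infty,0)))$ sojourn at $0$ followed by one excursion, so that in the local-time scale the excursions are thrown down by a Poisson process of intensity $\lambda((-\infty,0))$ carrying i.i.d.\ marks. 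Since local time runs at unit rate while $Y=0$ and is frozen during excursions, $L^{-1}_1=1+\sum_i\xi_i$, the sum being over the $\mathrm{Poisson}(\lambda((-\infty,0)))$-many excursions accrued over the local-time interval $[0,1]$, with $\xi_i$ their lengths and $L^{-1}_1=+\infty$ as soon as one of them is infinite. A Campbell computation then gives
$$-\log\EE\left[e^{-\theta L^{-1}_1}\mathbbm{1}_{\{L^{-1}_1<+\infty\}}\right]=\theta+\lambda((-\infty,0))\left(1-\EE\left[e^{-\theta\xi}\mathbbm{1}_{\{\xi<+\infty\}}\right]\right),$$
from which, together with part~\ref{USF:excursion}, the unit drift, the killing rate $\lambda((-\infty,0))p^*$ and the L\'evy mass $\lambda((-\infty,0))(1-p^*)$ all fall out.

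It remains to evaluate the generic excursion transform, and here I would exploit upward skip-freeness decisively. An excursion opens with the downward jump carrying $X$ off its running maximum — to relative level $-lh$ with probability proportional to $q_l$ — and, there being no upward overshoot, it closes precisely when $X$ first regains that maximum. Hence, given the opening displacement $-lh$, the excursion coincides with the first passage of the jump chain from $-lh$ up to $0$; if $N$ denotes the number of jumps this takes, then $\xi$ is a sum of $N$ independent $\Exp(\lambda(\mathbb R))$ inter-jump times, so that
$$\EE\left[e^{-\theta\xi}\mathbbm{1}_{\{\xi<+\infty\}}\right]=\sum_{k=1}^\infty\PP(N=k)\left(\frac{\lambda(\mathbb R)}{\lambda(\mathbb R)+\theta}\right)^k.$$
Writing $p_{l,k}$ for the probability that this first passage falls on precisely the $k$-th jump, conditioning on the opening displacement then yields $\PP(N=k)\propto\sum_{l\geq1}q_lp_{l,k}$, the constant being fixed by $\sum_k\PP(N=k)=\PP(N<+\infty)=1-p^*$; the jump law of $L^{-1}$ is thereby identified with the conditional excursion length.

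For the recursion I would use first-step analysis on the spatially homogeneous jump chain. Conditioning on the first jump out of $-lh$ — a $+h$ step (probability $p$) to $-(l-1)h$, or a $-mh$ step (probability $q_m$) to $-(l+m)h$ — the strong Markov property gives the master relation
$$p_{l,k+1}=p\,p_{l-1,k}+\sum_{m=1}^\infty q_m\,p_{l+m,k},\qquad\{l,k\}\subset\mathbb N,$$
with $p_{0,k}=0$ for $k\geq1$, whereas $p_{l,1}=p\delta_{l1}$ follows directly (a single jump reaches $0$ from $-lh$ only when $l=1$ and the jump is $+h$). The four-case form then results by truncating both terms through elementary support facts: $p_{l,k}=0$ unless $l\leq k$ (climbing $lh$ needs at least $l$ up-steps), and, writing the down-step sizes as $s_i\geq1$, one has $k-l=\sum_i(1+s_i)$, so that $k-l$ is never equal to $1$. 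These give $p_{k-1,k}=0$ — whence $p_{k,k+1}=0$, the case $l=k$ — and $p_{k,k}=p^k$ from the unique all-up path — whence $p_{k+1,k+1}=p^{k+1}$, the case $l=k+1$ — while the surviving sum runs over $1\leq m\leq k-l$ and the term $p\,p_{l-1,k}$ disappears when $l=1$.

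The main obstacle I foresee lies in the first paragraph: making the excursion bookkeeping rigorous at a holding rather than a regular point — that is, justifying cleanly that in the local-time scale finite and infinite excursions are governed by a rate-$\lambda((-\infty,0))$ Poisson process with i.i.d.\ marks — and fixing the normalisation of $\PP(N=k)$ so that the killing and L\'evy parts separate correctly. Once the excursion measure is secured, both the transform and the recursion are essentially formal.
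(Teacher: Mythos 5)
Your proposal, as it stands, proves only part (iv) of the theorem. Parts (i) and (ii) are invoked as inputs (the exit rate $-\tilde Q_{00}=\measure((-\infty,0))$, the fact that $L$ is the occupation time of $0$ and that $L^{-1}$ is a killed subordinator with unit drift), and --- more importantly --- so is part (iii): you use $\PP(\xi=+\infty)=p^*$ both to fix the normalisation of $\PP(N=k)$ and to identify the killing rate and L\'evy mass. Since (i)--(iii) are constituent claims of the very statement being proven, this is a genuine gap. Parts (i) and (ii) are soft (the paper disposes of them with a one-line rate argument and a citation to the literature), but (iii) is where the paper's proof does its real work: it introduces an auxiliary discrete-time skip-free chain $W$ started from the law $w_j=q_j/(1-p)$, computes the expected excursion length via Wald's identity together with the minimal-solution characterisation of mean hitting times (yielding the dichotomy $\zeta:=\sum_j jq_j<p$ versus $\zeta\geq p$ and the stated formula $\frac{\lambda(\{h\})h-\psi'(0+)}{(\psi'(0+)\lor 0)\lambda((-\infty,0))}$), and obtains $p^*$ by showing that the hitting probabilities satisfy $\alpha_i=\alpha_1^i$ with $\psi(\log(\alpha_1^{-1})/h)=0$, whence $\alpha_1=e^{-\Phi(0)h}$ by Theorem~\ref{corollary:longtimeandsup}\ref{ref:longtimebehaviour}. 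You would need to supply this, or an equivalent argument (e.g. $\alpha_l=\PP(T_{lh}<\infty)=e^{-\Phi(0)lh}$ from \eqref{eq:laplace_exponent_for_T_x}, combined with $\psi(\Phi(0))=0$), before your treatment of (iv) can stand as a proof of the full statement.

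For part (iv) itself your argument is correct, and in fact more detailed than the paper's, which dismisses this part with the single word ``straightforward'': the i.i.d.-cycle/Poisson-marking derivation of the Laplace exponent, the identification of the jump measure as $\lambda((-\infty,0))(1-p^*)$ times the conditional excursion-length law, and the first-step derivation of the recursion (with the support facts $p_{l,k}=0$ for $l>k$ or $k-l=1$, and $p_{k,k}=p^k$) are all sound. Your worry about rigour at the holding point is unnecessary: precisely because $0$ is a holding point of $Y$ (part (ii)), elementary strong Markov arguments at the successive return times of $Y$ to $0$ suffice, and no general (It\^o-type) excursion theory is needed. Finally, your normalisation of $\PP(N=k)$ is the internally consistent one: the opening displacement has law $q_l/(1-p)$, so $\PP(N=k)=\frac{1}{1-p}\sum_{l=1}^k q_lp_{l,k}$; the paper's literal formula $\PP(N=k)=\sum_{l=1}^k q_lp_{l,k}$ omits the factor $\frac{1}{1-p}=\measure(\mathbb{R})/\measure((-\infty,0))$ and is incompatible with the claimed killing rate $\lambda((-\infty,0))p^*$ (e.g.\ it would produce a strictly positive killing rate even when $X$ drifts to $+\infty$), so the discrepancy you would find on comparison is a slip in the statement, not an error in your argument.
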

\begin{proof}
\ref{USF:reflectedprocess} is clear, since, e.g. $Y$ transitions away from $0$ at the rate at which $X$ makes a negative jump; and from $s\in \Zh^{+}\backslash \{0\}$ to $0$ at the rate at which $X$ jumps up by $s$ or more etc. 

\ref{USF:localtime} is standard \cite[p. 141, Example~6.3 \& p. 149, Theorem~6.10]{kyprianou}.

We next establish \ref{USF:excursion}.  Denote, provisionally, by $\beta$ the expected excursion length. Further, let the discrete-time Markov chain $W$ (on the state space $\mathbb{N}_0$) be endowed with the initial distribution $w_j:=\frac{q_j}{1-p}$ for $j\in\mathbb{N}$, $w_0:=0$; and transition matrix $P$, given by $P_{0i}=\delta_{0i}$, whereas for $i\geq 1$: $P_{ij}=p$, if $j=i-1$; $P_{ij}=q_{j-i}$, if $j>i$; and $P_{ij}=0$ otherwise ($W$ jumps down with probability $p$, up $i$ steps with probability $q_i$, $i\geq1$, until it reaches $0$, where it gets stuck). Let further $N$ be the first hitting time for $W$ of $\{0\}$, so that a typical excursion length of $X$ is equal in distribution to an independent sum of $N$ (possibly infinite) $\Exp(\measure(\mathbb{R}))$-random variables. It is Wald's identity that $\beta=(1/\measure(\mathbb{R}))\EE[N]$. Then (in the obvious notation, where $\underline{\infty}$ indicates the sum is inclusive of $\infty$), by Fubini: $\EE[N]=\sum_{n=1}^{\underline{\infty}} n \sum_{l=1}^\infty w_l\PP_l(N=n)=\sum_{l=1}^\infty w_lk_l$, where $k_l$ is the mean hitting time of $\{0\}$ for $W$, if it starts from $l\in\mathbb{N}_0$, as in \cite[p. 12]{norris}. From the skip-free property of the chain $W$ it is moreover transparent that $k_i=\alpha i$, $i\in \mathbb{N}_0$, for some $0<\alpha\leq \infty$ (with the usual convention $0\cdot\infty=0$). 
Moreover we know \cite[p. 17, Theorem 1.3.5]{norris} that $(k_i:i\in\mathbb{N}_0)$ is the minimal solution to $k_0=0$ and $k_i=1+\sum_{j=1}^\infty P_{ij}k_j$ ($i\in\mathbb{N}$). Plugging in $k_i=\alpha i$, the last system of linear equations is equivalent to (provided $\alpha<\infty$) $0=1-p\alpha+\alpha\zeta$, where $\zeta:=\sum_{j=1}^\infty jq_j$. Thus, if $\zeta<p$, the minimal solution to the system is $k_i=i/(p-\zeta)$, $i\in \mathbb{N}_0$, from which $\beta=\zeta/(\measure((-\infty,0)) (p-\zeta))$ follows at once. If $\zeta\geq p$, clearly we must have $\alpha=+\infty$, hence $\EE[N]=+\infty$ and thus $\beta=+\infty$.

To establish the probability of an excursion being infinite, i.e. $\sum_{i=1}^\infty q_i(1-\alpha_i)/\sum_{i=1}^\infty q_i$, where $\alpha_i:=\PP_i(N<\infty)>0$, we see that (by the skip-free property) $\alpha_i=\alpha_1^i$, $i\in \mathbb{N}_0$, and by the strong Markov property, for $i\in \mathbb{N}$, $\alpha_i=p\alpha_{i-1}+\sum_{j=1}^\infty q_{j}\alpha_{i+j}$. It follows that $1=p\alpha_1^{-1}+\sum_{j=1}^\infty q_j\alpha_1^j$, i.e. $0=\psi(\log(\alpha_1^{-1})/h)$. Hence, by Theorem~\ref{corollary:longtimeandsup}\ref{ref:longtimebehaviour}, whose proof will be independent of this one, $\alpha_1=e^{-\Phi(0)h}$ (since $\alpha_1<1$, if and only if $X$ drifts to $-\infty$). 

Finally, \ref{USF:localtime:further} is straightforward. 
\end{proof}
We turn our attention now to the supremum process $\overline{X}$. First, using the lack of memory property of the exponential law and the skip-free nature of $X$, we deduce from the strong Markov property applied at the time $T_a$, that for every $a,b\in \Zh^{+}$, $p>0$: $\PP(T_{a+b}<e_p)=\PP(T_a<e_p)\PP(T_b<e_p).$ In particular, for any $n\in \mathbb{N}_0$: $\PP(T_{nh}<e_p)=\PP(T_h<e_p)^n.$ And since for $s\in \Zh^{+}$, $\{T_{s}<e_p\}=\{\overline{X}_{e_p}\geq s\}$ ($\PP$-a.s.)
one has (for $n\in \mathbb{N}_0$): $\PP(\overline{X}_{e_p}\geq nh)=\PP(\overline{X}_{e_p}\geq h)^n$. Therefore $\overline{X}_{e_p}/h\sim\geom (1-\PP(\overline{X}_{e_p}\geq h))$. 

Next, to identify $\PP(\overline{X}_{e_p}\geq h)$, $p>0$, observe that (for $\beta\geq 0$, $t\geq 0$): $\EE[\exp\{\Phi(\beta)X_t\}]=e^{t\beta}$ and hence $(\exp\{\Phi(\beta)X_t-\beta t\})_{t\geq 0}$ is an $(\mathbb{F},\PP)$-martingale by stationary independent increments of $X$, for each $\beta\geq 0$. Then apply the Optional Sampling Theorem at the bounded stopping time $T_x\land t$ ($t,x\geq 0$) to get: $$\EE[\exp\{\Phi(\beta)X(T_x\land t)-\beta(T_x\land t)\}]=1.$$ Note that $X(T_x\land t)\leq h\lceil x/h\rceil$ and $\Phi(\beta)X(T_x\land t)-\beta (T_x\land t)$ converges to $\Phi(\beta)h\lceil x/h\rceil-\beta T_x$ ($\PP$-a.s.) as $t\to\infty$ on $\{T_x<\infty\}$. It converges to $-\infty$ on the complement of this event, $\PP$-a.s., provided $\beta+\Phi(\beta)>0$. Therefore we deduce by dominated convergence, first for $\beta>0$ and then also for $\beta=0$, by taking limits: 
\begin{equation}\label{eq:laplace_exponent_for_T_x}
\EE[\exp\{-\beta T_x\}\mathbbm{1}_{\{T_x<\infty\}}]=\exp\{-\Phi(\beta)h\lceil x/h\rceil\}.
\end{equation}

Before we formulate out next theorem, recall also that any non-zero L\'evy process either drifts to $+\infty$, oscillates or drifts to $-\infty$ \cite[pp. 255-256, Proposition~37.10 and Definition~37.11]{sato}.

\begin{theorem}[Supremum process and long-term behaviour]\label{corollary:longtimeandsup}
\leavevmode
\begin{enumerate}[(i)]
\item\label{ref:supremumprocess}  The failure probability for the geometrically distributed $\overline{X}_{e_p}/h$ is $\exp\{-\Phi(p)h\}$ ($p>0$). 
\item\label{ref:longtimebehaviour} $X$ drifts to $+\infty$, oscillates or drifts to $-\infty$ according as to whether $\psi'(0+)$ is positive, zero, or negative. In the latter case $\overline{X}_\infty/h$ has a geometric distribution with failure probability $\exp\{-\Phi(0)h\}$. 
\item\label{ref:newmaxima} $(T_{nh})_{n\in\mathbb{N}_0}$ is a discrete-time increasing stochastic process, vanishing at $0$ and having stationary independent increments up to the explosion time, which is an independent geometric random variable; it is a killed random walk. 
\end{enumerate}
\end{theorem}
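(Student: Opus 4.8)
The plan is to handle the three parts in order, leaning throughout on the exit identity \eqref{eq:laplace_exponent_for_T_x} and on the geometric form of $\overline{X}_{e_p}/h$ established immediately above the statement. For \ref{ref:supremumprocess} I would start from the a.s.\ identity $\{T_h<e_p\}=\{\overline{X}_{e_p}\geq h\}$ and condition on $X$: since $e_p\sim\Exp(p)$ is independent of $X$ we have $\PP(e_p>t\mid X)=e^{-pt}$, so that $\PP(\overline{X}_{e_p}\geq h)=\PP(T_h<e_p)=\EE[e^{-pT_h}\mathbbm{1}_{\{T_h<\infty\}}]$. Evaluating \eqref{eq:laplace_exponent_for_T_x} at $x=h$ (whence $\lceil x/h\rceil=1$) and $\beta=p$ gives $\PP(\overline{X}_{e_p}\geq h)=e^{-\Phi(p)h}$. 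Because the already-identified geometric law satisfies $\PP(\overline{X}_{e_p}/h\geq n)=\PP(\overline{X}_{e_p}\geq h)^n$, the failure probability is exactly $\PP(\overline{X}_{e_p}\geq h)=e^{-\Phi(p)h}$. This step is essentially bookkeeping on top of identities already in hand.

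For \ref{ref:longtimebehaviour} the key observation is that $\{\overline{X}_\infty\geq nh\}=\{T_{nh}<\infty\}$ by upward skip-freeness together with the lattice support (the chain must pass through every level it exceeds). Hence, letting $\beta\downarrow 0$ in \eqref{eq:laplace_exponent_for_T_x} at $x=nh$ — equivalently letting $p\downarrow 0$ in \ref{ref:supremumprocess}, using $e_p=e_1/p\uparrow\infty$ and the continuity of $\Phi$ at $0$ (as the inverse of the continuous increasing bijection $\psi$) — yields $\PP(\overline{X}_\infty\geq nh)=e^{-n\Phi(0)h}$ for all $n\in\mathbb{N}_0$. When $\Phi(0)>0$, i.e.\ $\psi'(0+)<0$ (as recorded before the theorem), this is a genuine geometric law with failure probability $e^{-\Phi(0)h}\in(0,1)$; in particular $\overline{X}_\infty<\infty$ a.s., which rules out oscillation and drift to $+\infty$ and forces drift to $-\infty$. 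When $\Phi(0)=0$, i.e.\ $\psi'(0+)\geq 0$, the same formula gives $\overline{X}_\infty=+\infty$ a.s., excluding drift to $-\infty$. To separate the two remaining cases I would appeal to the mean of the process, noting that $\EE[X_1]=\psi'(0+)\in[-\infty,\infty)$ is always well defined: for $\psi'(0+)>0$ the strong law ($X_t/t\to\psi'(0+)$ a.s.) gives drift to $+\infty$, while $\psi'(0+)=0$ is the oscillatory case of the general L\'evy trichotomy recalled before the statement (\cite[Proposition~37.10]{sato}).

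I expect the separation of the oscillatory case from drift to $+\infty$ to be the only genuinely delicate point, precisely because the supremum is infinite in both and so cannot detect the difference; this is exactly where the mean-based criterion, rather than the exit identity \eqref{eq:laplace_exponent_for_T_x}, is indispensable. Everything else in \ref{ref:longtimebehaviour} reduces to the computation of $\PP(\overline{X}_\infty\geq nh)$ above.

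Finally, for \ref{ref:newmaxima} I would note $T_0=0$ trivially, and obtain stationary independent increments from the strong Markov property applied at each $T_{nh}$: upward skip-freeness forces $X_{T_{nh}}=nh$ exactly on $\{T_{nh}<\infty\}$ (no overshoot), so by spatial homogeneity the post-$T_{nh}$ process is an independent copy of $X$ and $T_{(n+1)h}-T_{nh}$ is distributed as $T_h$ and independent of $\mathcal{F}_{T_{nh}}$. Each increment is finite with probability $\PP(T_h<\infty)=e^{-\Phi(0)h}$, by \eqref{eq:laplace_exponent_for_T_x} at $\beta=0$; consequently the number of finite increments, which equals $\overline{X}_\infty/h$, is geometric with that failure probability and independent of the finite increment values. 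Up to this explosion time $(T_{nh})_n$ is therefore a random walk with i.i.d.\ increments distributed as $T_h$ conditioned to be finite, killed at an independent geometric time, as claimed.
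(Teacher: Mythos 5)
Parts (i) and (iii) of your proposal are correct: (i) is precisely the paper's own computation (its display \eqref{eq:supremum_independent_exp}), and your strong-Markov/no-overshoot argument for (iii) correctly fleshes out what the paper dismisses as ``clear''. In part (ii), the case $\psi'(0+)<0$ (equivalently $\Phi(0)>0$) also matches the paper: let $p\downarrow 0$ and invoke the trichotomy to convert $\overline{X}_\infty<\infty$ a.s.\ into drift to $-\infty$. The genuine gap is your treatment of the sub-case $\psi'(0+)=0$. You assert that this ``is the oscillatory case of the general L\'evy trichotomy (Sato, Proposition 37.10)'', but that proposition is only the statement that exactly one of the three behaviours occurs; it does not tell you \emph{which} one occurs, and in particular it does not exclude that a zero-mean process drifts to $+\infty$. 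Nor does your strong-law argument: $X_t/t\to 0$ a.s.\ is perfectly compatible with $X_t\to+\infty$ at a sublinear rate. What your route actually requires is the Chung--Fuchs-type mean criterion --- a non-zero L\'evy process with finite mean zero is recurrent, hence oscillates --- which is a genuine theorem (it is in Sato's section on laws of large numbers, and is the mean criterion for long-term behaviour in Kyprianou's book), not a consequence of the trichotomy. So the fact you need is true and citable, but the justification you give for it, as written, fails.

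For comparison, the paper avoids both the strong law and Chung--Fuchs. It quotes Bertoin's Proposition VI.17 --- $X$ drifts to $+\infty$ if and only if $\EE[T_s]<\infty$ for all $s\in\Zh^+$ --- and then differentiates the exit identity \eqref{eq:laplace_exponent_for_T_x} at $\beta=0+$ (justified by monotone convergence, using monotonicity of $(e^{-\beta T_s}-1)/\beta$) to get $\EE[T_s\mathbbm{1}_{\{T_s<\infty\}}]=(\beta\mapsto -e^{-\Phi(\beta)s})'(0+)$; hence $\EE[T_s]<\infty$ if and only if $\Phi(0)=0$ and $\Phi'(0+)<\infty$, i.e.\ if and only if $\psi'(0+)>0$. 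This single criterion settles the sub-cases $\psi'(0+)>0$ and $\psi'(0+)=0$ simultaneously and keeps the entire proof inside the exit-identity toolkit you had already set up. The cheapest repair of your proposal is therefore either to cite the mean criterion correctly in place of the trichotomy, or to replace your two-pronged argument (strong law plus oscillation claim) by this differentiation step.
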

\begin{remark}
Unlike in the spectrally negative case \cite[p. 189]{bertoin}, the supremum process cannot be obtained from the reflected process, since the latter does not discern a point of increase in $X$ when the latter is at its running maximum.
\end{remark}
\begin{proof}
We have for every $s\in \Zh^+$: 
\begin{equation}\label{eq:supremum_independent_exp}
\PP(\overline{X}_{e_p}\geq s)=\PP(T_{s}<e_p)=\EE[\exp\{-pT_{s}\}\mathbbm{1}_{\{T_s<\infty\}}]=\exp\{-\Phi(p)s\}.
\end{equation}
Thus \ref{ref:supremumprocess} obtains. 

For \ref{ref:longtimebehaviour} note that letting $p\downarrow 0$ in \eqref{eq:supremum_independent_exp}, we obtain $\overline{X}_\infty<\infty$ ($\PP$-a.s.), if and only if $\Phi(0)>0$, which is equivalent to $\psi'(0+)<0$. If so, $\overline{X}_\infty/h$ is geometrically distributed with failure probability $\exp\{-\Phi(0)h\}$ and then (and only then) does $X$ drift to $-\infty$.

It remains to consider drifting to $+\infty$ (the cases being mutually exclusive and exhaustive). Indeed, $X$ drifts to $+\infty$, if and only if $\EE[T_s]$ is finite for each $s\in \Zh^+$ \cite[p. 172, Proposition VI.17]{bertoin}. Using again the nondecreasingness of $(e^{-\beta T_s}-1)/\beta$ in $\beta\in [0,\infty)$, we deduce from \eqref{eq:laplace_exponent_for_T_x}, by monotone convergence, that one may differentiate under the integral sign, to get $\EE[T_s\mathbbm{1}_{\{T_s<\infty\}}]=(\beta\mapsto -\exp\{-\Phi(\beta)s\})'(0+)$. So the $\EE[T_s]$ are finite, if and only if $\Phi(0)=0$ (so that $T_s<\infty$ $\PP$-a.s.) and $\Phi'(0+)<\infty$. Since $\Phi$ is the inverse of $\psi\vert_{[\Phi(0),\infty)}$, this is equivalent to saying $\psi'(0+)>0$. 

Finally, \ref{ref:newmaxima} is clear.
\end{proof}

\begin{table}[!hbt]
\caption{Connections between the quantities $\psi'(0+)$, $\Phi(0)$, $\Phi'(0+)$. Behaviour of $X$ at large times and of its excursions away from the running supremum (the latter if $\lambda((-\infty,0))>0$).
}\label{table:longtimeandexcursions}
\begin{center}
\begin{tabular}{|c|c|c|c|c|}\hline
$\psi'(0+)$ & $\Phi(0)$ & $\Phi'(0+)$ & Long-term behaviour & Excursion length\\\hline
$\in (0,\infty)$ & $0$& $\in (0,\infty)$ & drifts to $+\infty$ & finite expectation\\\hline
$0$ & $0$& $+\infty$ & oscillates & a.s. finite with infinite expectation\\\hline
$\in [-\infty,0)$ & $\in (0,\infty)$& $\in (0,\infty)$ & drifts to $-\infty$ & infinite with a positive probability\\\hline
\end{tabular}
\end{center}
\end{table}
We conclude this section by offering a way to reduce the general case of an upwards skip-free L\'evy chain to one which necessarily drifts to $+\infty$. This will prove useful in the sequel. First, there is a pathwise approximation of an oscillating $X$, by (what is again) an upwards skip-free L\'evy chain, but drifting to infinity.

\begin{remark}\label{remark:approximate_oscillating_case}
Suppose $X$ oscillates. Let (possibly by enlarging the probability space to accommodate for it) $N$ be an independent Poisson process with intensity $1$ and $N^\epsilon_t:=N_{t\epsilon}$ ($t\geq 0$) so that $N^\epsilon$ is a Poisson process with intensity $\epsilon$, independent of $X$. Define $X^\epsilon:=X+hN^\epsilon$. Then, as $\epsilon\downarrow 0$,  $X^\epsilon$ converges to $X$, uniformly on bounded time sets, almost surely, and is clearly an upwards skip-free L\'evy chain drifting to $+\infty$.
\end{remark}
The reduction of the case when $X$ drifts to $-\infty$ is somewhat more involved and is done by a change of measure. For this purpose assume until the end of this subsection, that $X$ is already the coordinate process on the canonical space $\Omega=\mathbb{D}_h$, equipped with the $\sigma$-algebra $\mathcal{F}$ and filtration $\mathbb{F}$ of evaluation maps (so that $\PP$ \emph{coincides} with the law of $X$ on $\mathbb{D}_h$ and $\mathcal{F}=\sigma(\pr_s:s\in [0,+\infty))$, whilst for $t\geq 0$, $\mathcal{F}_t=\sigma(\pr_s:s\in [0,t])$, where $\pr_s(\omega)=\omega(s)$, for $(s,\omega)\in [0,+\infty)\times \mathbb{D}_h$). We make this transition in order to be able to apply the Kolmogorov extension theorem in the proposition, which follows. Note, however, that we are no longer able to assume standard conditions on $(\Omega,\mathcal{F},\mathbb{F},\PP)$. Notwithstanding this, $(T_x)_{x\in\mathbb{R}}$ remain $\mathbb{F}$-stopping times, since by the nature of the space $\mathbb{D}_h$, for $x\in\mathbb{R}$, $t\geq 0$, $\{T_x\leq t\}=\{\overline{X}_t\geq x\}\in\mathcal{F}_t$.

\begin{proposition}[Exponential change of measure]\label{proposition:exponential_change_of_measure}
Let $c\geq 0$. Then, demanding: 
\begin{equation}\label{eq:exponential_change}
\PP_c(\Lambda)=\EE[\exp \{cX_t-\psi(c)t\}\mathbbm{1}_\Lambda]\hspace{0.5cm}(\Lambda\in\mathcal{F}_t,t\geq 0)
\end{equation}
this introduces a unique measure $\PP_c$ on $\mathcal{F}$. Under the new measure, $X$ remains an upwards skip-free L\'evy chain with Laplace exponent $\psi_c=\psi(\cdot+c)-\psi(c)$, drifting to $+\infty$, if $c\geq \Phi(0)$, unless $c=\psi'(0+)=0$. Moreover, if $\lambda_c$ is the new L\'evy measure of $X$ under $\PP_c$, then $\lambda_c\ll\lambda$ and $\frac{d\lambda_c}{d\lambda}(x)=e^{c x}$ $\lambda$-a.e. in $x\in\mathbb{R}$. Finally, for every $\mathbb{F}$-stopping time $T$, $\PP_c\ll \PP$ on restriction to $\mathcal{F}_T':=\{A\cap \{T<\infty\}:A\in\mathcal{F}_T\}$, and: $$\frac{d\PP_c\vert_{\mathcal{F}_T'}}{d\PP\vert_{\mathcal{F}_T'}}=\exp \{cX_T-\psi(c)T\}.$$
\end{proposition}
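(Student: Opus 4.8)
The plan is to build everything on the single observation that, for fixed $c\ge0$, the process $\mathcal{E}_t:=\exp\{cX_t-\psi(c)t\}$ is a nonnegative $(\mathbb{F},\PP)$-martingale of unit mean: nonnegativity and integrability are immediate, $\EE[\mathcal{E}_t]=e^{t\psi(c)}e^{-\psi(c)t}=1$ from $\EE[e^{cX_t}]=e^{t\psi(c)}$ (valid as $c\in\Crr$), and the martingale property follows from the stationary independent increments of $X$ together with $\EE[e^{cX_s}]=e^{s\psi(c)}$. Consequently each prescription $\Lambda\mapsto\EE[\mathcal{E}_t\mathbbm{1}_\Lambda]$ ($\Lambda\in\mathcal{F}_t$) is a probability measure, and for $\Lambda\in\mathcal{F}_s\subseteq\mathcal{F}_t$ ($s\le t$) the tower property and the martingale identity give $\EE[\mathcal{E}_t\mathbbm{1}_\Lambda]=\EE[\mathcal{E}_s\mathbbm{1}_\Lambda]$, so the family $(\PP_c\vert_{\mathcal{F}_t})_{t\ge0}$ is consistent. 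Since we are on the canonical space $\mathbb{D}_h$ with $\mathcal{F}=\sigma(\pr_s:s\ge0)$, the Kolmogorov extension theorem supplies a unique measure $\PP_c$ on $\mathcal{F}$ restricting to each of these; that it is carried by c\`adl\`ag paths is automatic from $\PP_c\vert_{\mathcal{F}_t}\ll\PP\vert_{\mathcal{F}_t}$.

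Second, I would identify the law of $X$ under $\PP_c$. For $0\le s\le t$, $A\in\mathcal{F}_s$ and bounded measurable $f$, writing $\mathcal{E}_t/\mathcal{E}_s=\exp\{c(X_t-X_s)-\psi(c)(t-s)\}$ as a function of the increment and using that under $\PP$ the increment $X_t-X_s$ is independent of $\mathcal{F}_s$ with the law of $X_{t-s}$, one finds $\EE_c[\mathbbm{1}_Af(X_t-X_s)]=\PP_c(A)\,\EE[\exp\{cX_{t-s}-\psi(c)(t-s)\}f(X_{t-s})]$. This factorisation shows simultaneously that under $\PP_c$ the increments of $X$ are stationary and independent of the past, so $X$ is again a L\'evy process; taking $f(x)=e^{\beta x}$ reads off its Laplace exponent $\psi_c(\beta)=\psi(\beta+c)-\psi(c)$. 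Rewriting $\psi_c(\beta)=\int(e^{\beta x}-1)e^{cx}\lambda(dx)$ exhibits the new L\'evy measure as $\lambda_c(dx)=e^{cx}\lambda(dx)$, whence $\lambda_c\ll\lambda$ with density $e^{cx}$ and total mass $e^{ch}\lambda(\{h\})+\lambda((-\infty,0])<\infty$; since $e^{cx}>0$ the support is unchanged, so $X$ remains a compound Poisson process that is upwards skip-free, i.e. an upwards skip-free L\'evy chain.

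Third, the long-term behaviour. By Theorem~\ref{corollary:longtimeandsup}\ref{ref:longtimebehaviour} applied under $\PP_c$, $X$ drifts to $+\infty$ iff $\psi_c'(0+)>0$, and $\psi_c'(0+)=\psi'(c+)$. Here I would invoke the strict convexity of $\psi\vert_{[0,\infty)}$ (so $\psi'$ is strictly increasing) and the description of $\Phi(0)$ as its largest root: when $\Phi(0)>0$ the minimiser of $\psi$ lies strictly left of $\Phi(0)$, forcing $\psi'(\Phi(0))>0$, while when $\Phi(0)=0$ one has $\psi'(0+)\ge0$. A short case check then gives $\psi'(c+)>0$ for every $c\ge\Phi(0)$, the sole exception being $c=\Phi(0)=0$ with $\psi'(0+)=0$ (the oscillating, untransformed process), which is exactly the stated exclusion $c=\psi'(0+)=0$.

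Finally, the stopping-time Radon--Nikodym identity. For $A\in\mathcal{F}_T$ and $t\ge0$ one has $A\cap\{T\le t\}\in\mathcal{F}_t$, so by definition $\PP_c(A\cap\{T\le t\})=\EE[\mathcal{E}_t\mathbbm{1}_{A\cap\{T\le t\}}]$; I would then apply optional sampling to $\mathcal{E}$ at the bounded time $T\wedge t$ to replace $\mathcal{E}_t$ by $\mathcal{E}_{T\wedge t}=\mathcal{E}_T$ (the last equality holding on $\{T\le t\}$), and let $t\uparrow\infty$ using monotone convergence on both sides to reach $\PP_c(A\cap\{T<\infty\})=\EE[\exp\{cX_T-\psi(c)T\}\mathbbm{1}_{A\cap\{T<\infty\}}]$; this delivers both $\PP_c\ll\PP$ on $\mathcal{F}_T'$ and the claimed density. \textbf{The main obstacle} is justifying optional sampling, since on $\mathbb{D}_h$ the filtration $\mathbb{F}$ is only the raw filtration of evaluation maps and need not be right-continuous. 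I would circumvent this by dyadic approximation $\tau_n\downarrow T\wedge t$ from above: discrete-time optional sampling (which needs no regularity of the filtration) gives $\EE[\mathcal{E}_t\mathbbm{1}_B]=\EE[\mathcal{E}_{\tau_n}\mathbbm{1}_B]$ for every $B\in\mathcal{F}_{T\wedge t}\subseteq\mathcal{F}_{\tau_n}$, and then the c\`adl\`ag property of the paths of $\mathcal{E}$ together with the uniform integrability of $(\mathcal{E}_{\tau_n})_n$ (each being a conditional expectation of the single variable $\mathcal{E}_t$) lets me pass to the limit and obtain the bounded-time identity without any appeal to right-continuity of $\mathbb{F}$.
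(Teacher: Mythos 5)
Your proof is correct and follows essentially the same route as the paper's: the exponential martingale $(\exp\{cX_t-\psi(c)t\})_{t\geq 0}$ plus the Kolmogorov extension theorem for existence and uniqueness, the increment factorisation to get the L\'evy property and the Laplace exponent $\psi_c$ (whence $\lambda_c$), and optional sampling at the bounded time $T\wedge t$ followed by monotone convergence for the stopping-time density. Your two elaborations --- the convexity case-check showing $\psi'(c+)>0$ for all $c\geq\Phi(0)$ outside the excluded case, and the dyadic-approximation justification of optional sampling under the raw filtration on $\mathbb{D}_h$ --- are sound and merely fill in steps the paper treats as immediate (it writes \emph{clearly} for the former and invokes the Optional Sampling Theorem directly for the latter).
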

\begin{proof}
That $\PP_c$ is introduced consistently as a probability measure on $\mathcal{F}$ follows from the Kolmogorov extension theorem \cite[p. 143, Theorem 4.2]{parthasarathy}. Indeed, $M:=(\exp \{cX_t-\psi(c)t\})_{t\geq 0}$ is a nonnegative martingale (use independence and stationarity of increments of $X$ and the definition of the Laplace exponent), equal identically to $1$ at time $0$. 

Further, for all $\beta\in\Crr$, $\{t,s\}\subset \mathbb{R}_+$ and $\Lambda\in\mathcal{F}_t$: 
\begin{eqnarray*}
\EE_c[\exp\{\beta(X_{t+s}-X_t)\}\mathbbm{1}_\Lambda]&=&\EE[\exp\{cX_{t+s}-\psi(c)(t+s)\}\exp\{\beta(X_{t+s}-X_t)\}\mathbbm{1}_\Lambda]\\
&=&\EE[\exp\{(c+\beta)(X_{t+s}-X_t)-\psi(c)s\}]\EE[\exp\{cX_t-\psi(c)t\}\mathbbm{1}_\Lambda]\\
&=&\exp\{s(\psi(c+\beta)-\psi(c))\}\PP_c(\Lambda).
\end{eqnarray*}
An application of the Functional Monotone Class Theorem then shows that $X$ is indeed a L\'evy process on $(\Omega,\mathcal{F},\mathbb{F},\PP_c)$ and its Laplace exponent under $\PP_c$ is as stipulated (that $X_0=0$ $\PP_c$-a.s. follows from the absolute continuity of $\PP_c$ with respect to $\PP$ on restriction to $\mathcal{F}_0$). 

Next, from the expression for $\psi_c$, the claim regarding $\measure_c$ follows at once. Then clearly $X$ remains an upwards skip-free L\'evy chain under $\PP_c$, drifting to $+\infty$, if $\psi'(c+)>0$. 

Finally, let $A\in\mathcal{F}_T$ and $t\geq 0$. Then $A\cap \{T\leq t\}\in\mathcal{F}_{T\land t}$, and by the Optional Sampling Theorem:
\begin{equation*}
\PP_c(A\cap \{T\leq t\})=\EE[M_t\mathbbm{1}_{A\cap \{T\leq t\}}]=\EE[\EE[M_t\mathbbm{1}_{A\cap \{T\leq t\}}\vert \mathcal{F}_{T\land t}]]=\EE[M_{T\land t} \mathbbm{1}_{A\cap \{T\leq t\}}]=\EE[M_{T} \mathbbm{1}_{A\cap \{T\leq t\}}].
\end{equation*}
Using the MCT, letting $t\to\infty$, we obtain the equality $\PP_c(A\cap \{T<\infty\})=\EE[M_T\mathbbm{1}_{A\cap \{T<\infty\}}]$.
\end{proof}

\begin{proposition}[Conditioning to drift to $+\infty$]\label{proposition:conditioned_to_drift_to_infinity}
Assume $\Phi(0)>0$ and denote $\PPm:=\PP_{\Phi(0)}$ (see \eqref{eq:exponential_change}). We then have as follows.
\begin{enumerate}[(1)]
\item\label{lemma:drifttoinfty:one} For every $\Lambda\in\mathcal{A}:=\cup_{t\geq 0}\mathcal{F}_t$, $\lim_{n\to\infty}\PP(\Lambda\vert \overline{X}_\infty\geq nh)=\PPm(\Lambda)$.
\item\label{lemma:drifttoinfty:two} For every $x\geq 0$, the stopped process $X^{T_x}=(X_{t\land T_x})_{t\geq 0}$ is identical in law under the measures $\PPm$ and $\PP(\cdot\vert T_x<\infty)$ on the canonical space $\mathbb{D}_h$. 
\end{enumerate}
\end{proposition}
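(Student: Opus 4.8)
The plan is to lean throughout on two simplifications. First, since $\Phi(0)$ is a root of $\psi\vert_{[0,\infty)}$ we have $\psi(\Phi(0))=0$, so the change-of-measure density of Proposition~\ref{proposition:exponential_change_of_measure} at a fixed time collapses to $\PPm(\Lambda)=\EE[e^{\Phi(0)X_t}\mathbbm{1}_\Lambda]$ for $\Lambda\in\mathcal{F}_t$, and $\EE[e^{\Phi(0)X_t}]=e^{t\psi(\Phi(0))}=1$. Second, by skip-freeness $X$ can enter $[x,\infty)$ only by landing exactly on $h\lceil x/h\rceil$, so $X_{T_x}=h\lceil x/h\rceil$ on $\{T_x<\infty\}$. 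I also record, since $\Phi(0)>0$ forces $X$ to drift to $-\infty$, that Theorem~\ref{corollary:longtimeandsup}\ref{ref:longtimebehaviour} makes $\overline{X}_\infty/h$ geometric with failure probability $e^{-\Phi(0)h}$, whence $\PP(\overline{X}_\infty\geq nh)=e^{-\Phi(0)nh}$.

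For \ref{lemma:drifttoinfty:one} I would fix $\Lambda\in\mathcal{F}_t$ and split $\{\overline{X}_\infty\geq nh\}$ according to whether the level $nh$ is reached by time $t$. On $\{\overline{X}_t<nh\}$ the event is $\{T_{nh}<\infty\}$ realised after $t$; conditioning on $\mathcal{F}_t$ by the Markov property, and using spatial homogeneity with \eqref{eq:laplace_exponent_for_T_x} (valid because $X_t\in\Zh$, $X_t<nh$ there), gives $\PP_{X_t}(T_{nh}<\infty)=e^{-\Phi(0)nh}e^{\Phi(0)X_t}$. Dividing by $\PP(\overline{X}_\infty\geq nh)$ then yields
$$\frac{\PP(\Lambda\cap\{\overline{X}_t<nh\}\cap\{\overline{X}_\infty\geq nh\})}{\PP(\overline{X}_\infty\geq nh)}=\EE\left[\mathbbm{1}_\Lambda\mathbbm{1}_{\{\overline{X}_t<nh\}}e^{\Phi(0)X_t}\right]\xrightarrow[n\to\infty]{}\EE\left[\mathbbm{1}_\Lambda e^{\Phi(0)X_t}\right]=\PPm(\Lambda),$$
the convergence by dominated convergence with integrable dominator $e^{\Phi(0)\overline{X}_t}$. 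It remains to kill the leftover $\PP(\Lambda\cap\{\overline{X}_t\geq nh\})/\PP(\overline{X}_\infty\geq nh)$, which is at most $\PP(\overline{X}_t\geq nh)/e^{-\Phi(0)nh}$; taking $\Lambda=\Omega$ in the displayed identity shows this equals $1-\EE[\mathbbm{1}_{\{\overline{X}_t<nh\}}e^{\Phi(0)X_t}]\to 1-\EE[e^{\Phi(0)X_t}]=1-e^{t\psi(\Phi(0))}=0$.

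For \ref{lemma:drifttoinfty:two} I would apply the final clause of Proposition~\ref{proposition:exponential_change_of_measure} at the stopping time $T_x$. Because $\psi(\Phi(0))=0$ and $X_{T_x}=h\lceil x/h\rceil$ on $\{T_x<\infty\}$, the density there is the \emph{constant} $e^{\Phi(0)h\lceil x/h\rceil}$. Hence, for any bounded functional $G$ of the $\mathcal{F}_{T_x}$-measurable stopped path $X^{T_x}$, and since $T_x<\infty$ $\PPm$-a.s. (as $X$ drifts to $+\infty$ under $\PPm$),
$$\EEm[G(X^{T_x})]=e^{\Phi(0)h\lceil x/h\rceil}\EE[G(X^{T_x})\mathbbm{1}_{\{T_x<\infty\}}]=\frac{\EE[G(X^{T_x})\mathbbm{1}_{\{T_x<\infty\}}]}{\PP(T_x<\infty)}=\EE[G(X^{T_x})\mid T_x<\infty],$$
the middle step using $\PP(T_x<\infty)=e^{-\Phi(0)h\lceil x/h\rceil}$ from \eqref{eq:laplace_exponent_for_T_x}. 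Letting $G$ range over all such functionals identifies the two laws on $\mathbb{D}_h$.

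I expect the main obstacle to be the control, in \ref{lemma:drifttoinfty:one}, of the mass of paths already reaching $nh$ by time $t$: a priori this need not be small relative to $e^{-\Phi(0)nh}$, and the clean cancellation rests entirely on the normalisation $\EE[e^{\Phi(0)X_t}]=e^{t\psi(\Phi(0))}=1$, i.e. on $\Phi(0)$ being a genuine root of $\psi$. Once that is secured, both parts reduce to bookkeeping around the two simplifications $\psi(\Phi(0))=0$ and $X_{T_x}=h\lceil x/h\rceil$.
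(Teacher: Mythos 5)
Your proof is correct, and on both parts it deviates from the paper's own argument in ways worth recording. For part \ref{lemma:drifttoinfty:one} you use the same decomposition as the paper (splitting $\{\overline{X}_\infty\geq nh\}$ according to whether $T_{nh}\leq t$, and applying the Markov property together with \eqref{eq:laplace_exponent_for_T_x} on $\{t<T_{nh}\}$), but you dispose of the residual term $e^{\Phi(0)nh}\PP(\Lambda\cap\{T_{nh}\leq t\})$ differently: the paper bounds it via $\PP(T_{nh}\leq t)\leq e^t\PP(\overline{X}_{e_1}\geq nh)=e^te^{-nh\Phi(1)}=o(e^{-nh\Phi(0)})$, exploiting $\Phi(1)>\Phi(0)$, whereas you take $\Lambda=\Omega$ in your displayed identity and note that, since $\EE[\mathbbm{1}_{\{\overline{X}_t<nh\}}e^{\Phi(0)X_t}]\to\EE[e^{\Phi(0)X_t}]=1$ by dominated convergence (with integrable dominator $e^{\Phi(0)\overline{X}_t}$), the complementary ratio $\PP(\overline{X}_t\geq nh)/\PP(\overline{X}_\infty\geq nh)$ must tend to $0$. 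This is self-contained, avoids the auxiliary estimate through $e_1$ altogether, and rests only on the normalisation $\psi(\Phi(0))=0$, exactly as you say. For part \ref{lemma:drifttoinfty:two} your route is genuinely different: the paper deduces it from part \ref{lemma:drifttoinfty:one} via the strong Markov property (the law of $X^{T_x}$ under $\PP(\cdot\vert T_{x'}<\infty)$ does not depend on $x'\geq x$), a limit $x'\to\infty$, and a $\pi$/$\lambda$-argument, whereas you apply the stopping-time form of the change of measure (the final clause of Proposition~\ref{proposition:exponential_change_of_measure}) at $T_x$, observing that by skip-freeness the density $e^{\Phi(0)X_{T_x}}$ is the constant $e^{\Phi(0)h\lceil x/h\rceil}=1/\PP(T_x<\infty)$ on $\{T_x<\infty\}$, and that $T_x<\infty$ holds $\PPm$-a.s.\ because $X$ drifts to $+\infty$ under $\PPm$. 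Your argument is more direct and makes structurally transparent why conditioning on $\{T_x<\infty\}$ coincides with the tilted law; the paper's argument, in exchange, yields en route the fact that the law of $X^{T_x}$ under $\PP(\cdot\vert T_{x'}<\infty)$ is the same for all $x'\geq x$. The only points you leave tacit --- both harmless and both addressed in the paper --- are that $G(X^{T_x})$ is indeed $\mathcal{F}_{T_x}$-measurable (via progressive measurability of $X$), and that $\Phi(0)>0$ rules out the exceptional case $c=\psi'(0+)=0$ in Proposition~\ref{proposition:exponential_change_of_measure}, so that $X$ does drift to $+\infty$ under $\PPm$.
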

\begin{proof}
With regard to \ref{lemma:drifttoinfty:one}, we have as follows. Let $t\geq 0$. By the Markov property of $X$ at time $t$,  the process $\overset{\triangle}{X}:=(X_{t+s}-X_t)_{s\geq 0}$ is identical in law with $X$ on $\mathbb{D}_h$ and independent of $\mathcal{F}_t$ under $\PP$. Thus, letting $\overset{\triangle}{T}_y:=\inf\{t\geq 0:\overset{\triangle}{X}_t\geq y\}$ ($y\in\mathbb{R}$), one has for $\Lambda\in\mathcal{F}_t$ and $n\in\mathbb{N}_0$, by conditioning:
\begin{equation*}
\PP(\Lambda\cap \{t<T_{nh}<\infty\})=\EE[\EE[\mathbbm{1}_\Lambda\mathbbm{1}_{\{t<T_{nh}\}}\mathbbm{1}_{\{\overset{\triangle}{T}_{nh-X_t}<\infty\}}\vert\mathcal{F}_t]]=\EE[e^{\Phi(0)(X_t-nh)}\mathbbm{1}_{\Lambda\cap \{t<T_{nh}\}}],
\end{equation*}
since $\{\Lambda,\{t<T_{nh}\}\}\cup \sigma(X_t)\subset\mathcal{F}_t$. Next, noting that $\{\overline{X}_\infty \geq nh\}=\{T_{nh}<\infty\}$:
\begin{eqnarray*}
\PP(\Lambda\vert \overline{X}_\infty>nh)&=&e^{\Phi(0)nh}\left(\PP(\Lambda\cap \{T_{nh}\leq t\})+\PP(\Lambda\cap \{t<T_{nh}<\infty\})\right)\\
&=&e^{\Phi(0)nh}\left(\PP(\Lambda\cap\{T_{nh}\leq t\})+\EE[e^{\Phi(0)(X_t-nh)}\mathbbm{1}_{\Lambda\cap \{t<T_{nh}\}}]\right)\\
&=&e^{\Phi(0)nh}\PP(\Lambda\cap \{T_{nh}\leq t\})+\PPm(\Lambda\cap \{t<T_{nh}\}).
\end{eqnarray*}
The second term clearly converges to $\PPm(\Lambda)$ as $n\to\infty$. The first converges to $0$, because by \eqref{eq:supremum_independent_exp} $\PP(\overline{X}_{e_1}\geq nh)=e^{-nh\Phi(1)}=o(e^{-nh\Phi(0)})$, as $n\to\infty$, and we have the estimate $\PP(T_{nh}\leq t)=\PP(\overline{X}_t\geq nh)=\PP(\overline{X}_t\geq nh\vert e_1\geq t)\leq \PP(\overline{X}_{e_1}\geq nh\vert e_1\geq t)\leq e^t \PP(\overline{X}_{e_1}\geq nh)$. 

We next show \ref{lemma:drifttoinfty:two}. Note first that $X$ is $\mathbb{F}$-progressively measurable (in particular, measurable), hence the stopped process $X^{T_x}$ is measurable as a mapping into $\mathbb{D}_h$ \cite[p. 5, Problem~1.16]{karatzas}. 

Further, by the strong Markov property, conditionally on $\{T_x<\infty\}$, $\mathcal{F}_{T_x}$ is independent of the future increments of $X$ after $T_x$, hence also of $\{T_{x'}<\infty\}$ for any $x'>x$. We deduce that the law of $X^{T_x}$ is the same under $\PP(\cdot\vert T_x<\infty)$ as it is under $\PP(\cdot\vert T_{x'}<\infty)$ for any $x'>x$. \ref{lemma:drifttoinfty:two} then follows from \ref{lemma:drifttoinfty:one} by letting $x'$ tend to $+\infty$, the algebra $\mathcal{A}$ being sufficient to determine equality in law by a $\pi$/$\lambda$-argument.
\end{proof}

\subsection{Wiener-Hopf factorization}
%

\begin{definition}\label{definition:WHcompoundPoissonprocess}
We define, for $t\geq 0$, $\overline{G}^*_t:=\inf \{s\in [0,t]:X_s=\overline{X}_t\}$, i.e., $\PP$-a.s., $\overline{G}^*_t$ is the last time in the interval $[0,t]$ that $X$ attains a new maximum. Similarly we let $\underline{G}_t:=\sup\{s\in [0,t]:X_s=\underline{X}_s\}$ be, $\PP$-a.s., the last time on $[0,t]$ of attaining the running infimum ($t\geq 0$).
\end{definition}
While the statements of the next proposition are given for the upwards skip-free L\'evy chain $X$, they in fact hold true for the Wiener-Hopf factorization of \emph{any} compound Poisson process. Moreover, they are (essentially) known \cite{kyprianou}. Nevertheless, we begin with these general observations, in order to (a) introduce further relevant notation and (b) provide the reader with the prerequisites needed to understand the remainder of this subsection. Immediately following Proposition~\ref{proposition:WHcompoundPoissonprocess}, however, we particularize to our the skip-free setting. 

\begin{proposition}\label{proposition:WHcompoundPoissonprocess}
Let $p>0$. Then:
\begin{enumerate}[(i)]
\item The pairs $(\overline{G}^*_{e_p},\overline{X}_{e_p})$ and $(e_p-\overline{G}^*_{e_p},\overline{X}_{e_p}-X_{e_p})$ are independent and infinitely divisible, yielding the factorisation: $$\frac{p}{p-i\eta-\Psi(\theta)}=\Psi^+_p(\eta,\theta)\Psi^-_p(\eta,\theta),$$ where for $\{\theta,\eta\}\subset\mathbb{R}$, $$\Psi_p^+(\eta,\theta):=\EE[\exp\{i\eta \overline{G}^*_{e_p}+i\theta \overline{X}_{e_p}\}]\text{ and }\Psi_p^-(\eta,\theta):=\EE[\exp\{i\eta \underline{G}_{e_p}+i\theta \underline{X}_{e_p}\}].$$ Duality: $(e_p-\overline{G}^*_{e_p},\overline{X}_{e_p}-X_{e_p})$ is equal in distribution to $(\underline{G}_{e_p},-\underline{X}_{e_p})$. $\Psi^+_p$ and $\Psi^-_p$ are the Wiener-Hopf factors.
\item\label{theorem:WHCP:ii}  The Wiener-Hopf factors may be identified as follows: $$\EE[\exp\{-\alpha\overline{G}^*_{e_p}-\beta \overline{X}_{e_p}\}]=\frac{\kappa^*(p,0)}{\kappa^*(p+\alpha,\beta)}$$ and $$\EE[\exp\{-\alpha\underline{G}_{e_p}+\beta\underline{X}_{e_p}\}]=\frac{\hat{\kappa}(p,0)}{\hat{\kappa}(p+\alpha,\beta)}$$ for $\{\alpha,\beta\}\subset \Crr$. 
\item\label{theorem:WHCP:iii} Here, in terms of the law of $X$, $$\kappa^*(\alpha,\beta):=k^*\exp\left(\int_0^\infty\int_{(0,\infty)}(e^{-t}-e^{-\alpha t-\beta x})\frac{1}{t}\PP(X_t\in dx)dt\right)$$ and $$\hat{\kappa}(\alpha,\beta)=\hat{k}\exp\left(\int_0^\infty\int_{(-\infty,0]}(e^{-t}-e^{-\alpha t+\beta x})\frac{1}{t}\PP(X_t\in dx)dt\right)$$ for $\alpha\in \Cr$, $\beta\in \Crr$ and some constants $\{k^*,\hat{k}\}\subset\mathbb{R}^+$. 
\end{enumerate}
\end{proposition}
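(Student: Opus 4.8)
The plan is to derive all three parts from the general Wiener--Hopf apparatus, the point being that for a compound Poisson process the reflected process $\overline X-X$ has $0$ as a holding point (Theorem~\ref{theorem:locals}\ref{USF:localtime}), so the ladder structure is genuinely discrete and the accompanying excursion theory is elementary. For part (i) I would first obtain the duality statement by time--reversal: for fixed $t$ the reversed path $(X_t-X_{(t-s)-})_{0\le s\le t}$ has the same law as $(X_s)_{0\le s\le t}$, and randomising $t=e_p$ (legitimate by the lack of memory of $e_p$) carries the last instant of the maximum to the first instant of the minimum and $\overline X_{e_p}$ to $-\underline X_{e_p}$, giving $(e_p-\overline G^*_{e_p},\overline X_{e_p}-X_{e_p})\overset{d}{=}(\underline G_{e_p},-\underline X_{e_p})$. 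For the independence and the factorisation I would split the trajectory at its maximum: the excursions of $X$ below its running supremum form a marked Poisson point process in the occupation--time local time $L$, and sampling at the independent exponential $e_p$ severs this process into the excursions completed before $\overline G^*_{e_p}$ and the single (possibly incomplete) excursion that straddles $e_p$. Memorylessness of $e_p$ renders these two pieces independent; the first determines $(\overline G^*_{e_p},\overline X_{e_p})$ and the second (via the duality just established) determines $(e_p-\overline G^*_{e_p},\overline X_{e_p}-X_{e_p})$. Writing $\eta e_p+\theta X_{e_p}$ as the sum of the contributions of the two pieces and using the elementary evaluation
\[
\EE[e^{i\eta e_p+i\theta X_{e_p}}]=\int_0^\infty pe^{-pt}e^{i\eta t}e^{t\Psi(\theta)}\,dt=\frac{p}{p-i\eta-\Psi(\theta)}
\]
then yields the asserted product $\Psi_p^+\Psi_p^-$. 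For the compound Poisson case an equally clean route to (i) is to observe $X$ at its Poisson jump epochs and invoke the discrete Wiener--Hopf factorisation of the embedded walk under independent geometric killing.

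For part (ii) I would introduce the bivariate ascending ladder subordinator $(L^{-1}_t,H_t)_{t\ge0}$, with $H_t:=\overline X_{L^{-1}_t}$ the ladder--height process and joint Laplace exponent $\kappa^*$ fixed by $\EE[\exp\{-\alpha L^{-1}_t-\beta H_t\}]=e^{-t\kappa^*(\alpha,\beta)}$ (killing included). Since the running maximum at time $e_p$ is attained at the last ladder epoch preceding $e_p$, the compensation formula for the excursion point process weights the ladder local time $t$ by the factor $e^{-pL^{-1}_t}\,dt=\PP(e_p>L^{-1}_t\mid\text{ladder process})\,dt$, so that
\[
\EE[\exp\{-\alpha\overline G^*_{e_p}-\beta\overline X_{e_p}\}]=\frac{\int_0^\infty\EE[e^{-(p+\alpha)L^{-1}_t-\beta H_t}]\,dt}{\int_0^\infty\EE[e^{-pL^{-1}_t}]\,dt}=\frac{\kappa^*(p,0)}{\kappa^*(p+\alpha,\beta)}.
\]
The second identity follows verbatim on replacing the ascending ladder process of $X$ by the descending one (equivalently, the ascending ladder process of $-X$), whose exponent is $\hat\kappa$; that the two factors so obtained recover the factorisation of (i) under $\alpha=-i\eta$, $\beta=\mp i\theta$ serves as a consistency check.

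Part (iii) is where the real work lies. Here I would establish the Fristedt-type representation by taking logarithms in the factorisation of (i). The Frullani identity gives
\[
\log\frac{p}{p-i\eta-\Psi(\theta)}=\int_0^\infty\int_{\RR}\bigl(e^{-pt+i\eta t+i\theta x}-e^{-pt}\bigr)\frac1t\,\PP(X_t\in dx)\,dt,
\]
and I would split the inner integral over $(0,\infty)$ and $(-\infty,0]$. The resulting $(0,\infty)$--piece, as a function of $\theta$, extends to a bounded analytic function on the closed upper half--plane (because $e^{i\theta x}$ is bounded there for $x>0$), and the $(-\infty,0]$--piece extends to the closed lower half--plane; by the uniqueness of the Wiener--Hopf factorisation these pieces must coincide with $\log\Psi^+_p$ and $\log\Psi^-_p$ respectively. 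Comparing with the expressions for the factors found in (ii), under $\alpha=-i\eta$ and $\beta=-i\theta$, identifies $\log\kappa^*(\alpha,\beta)$ and $\log\hat\kappa(\alpha,\beta)$ up to an additive constant, which is precisely the stated formula with the free constants $k^*,\hat k$ (the subtracted normaliser must be converted from the $e^{-pt}$ produced by the sampling to the $e^{-t}$ appearing in the statement, the discrepancy being a $p$-independent constant absorbed into $k^*$ and $\hat k$); analytic continuation in $(\alpha,\beta)$ from the imaginary axis to $\alpha\in\Cr$, $\beta\in\Crr$ then completes it. The main obstacle I anticipate is precisely this step: justifying the interchange of integration in the Frullani representation (the inner measure $\frac1t\PP(X_t\in dx)$ is only $\sigma$--finite near $t=0$, so the splitting must be controlled through the regularising comparison term as written), and then invoking the half--plane analyticity together with a boundedness/Liouville argument to pin the factorisation down uniquely. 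By contrast, the independence in (i) and the ratio computation in (ii) are comparatively routine once the holding--point structure of Theorem~\ref{theorem:locals}\ref{USF:localtime} is in hand.
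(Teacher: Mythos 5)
Your proposal is correct and takes essentially the same route as the paper: the paper's proof of this proposition is simply a citation of the compound Poisson remarks following Theorem 6.16 in Kyprianou (pp.\ 167--168) --- duality by time reversal, splitting at the maximum at an independent exponential time via the discrete ladder/excursion structure, identification of the factors through the bivariate ladder exponents, and the Fristedt-type representation --- supplemented by exactly the analytic-continuation devices you flag (Fubini, Cauchy/Morera, uniqueness of analytic factorizations, integrability of $q$-potential measures) to get part (iii) on $\alpha\in\Cr$, $\beta\in\Crr$. The only slip is cosmetic: the discrepancy between the $e^{-pt}$ normalizer produced by sampling and the $e^{-t}$ in the statement is not a $p$-independent constant, but it need not be absorbed into $k^*$ at all, since it equals $\kappa^*(p,0)/k^*$ and cancels in the ratio $\kappa^*(p,0)/\kappa^*(p+\alpha,\beta)$.
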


\begin{proof}
These claims are contained in the remarks regarding compound Poisson processes in \cite[pp. 167-168]{kyprianou} pursuant to the proof of Theorem 6.16 therein. Analytic continuations have been effected in part \ref{theorem:WHCP:iii} using properties of zeros of holomorphic functions \cite[p. 209, Theorem~10.18]{rudin}, the theorems of Cauchy, Morera and Fubini, and finally the finiteness/integrability properties of $q$-potential measures \cite[p. 203, Theorem~30.10(ii)]{sato}.
\end{proof}

\begin{remark}\label{remark:ascending_descennding}
\leavevmode
\begin{enumerate}[(i)]
\item\label{remark:ascending_descennding:i} \cite[pp. 157, 168]{kyprianou} $\hat{\kappa}$ is also the Laplace exponent of the (possibly killed) bivariate descending ladder subordinator $(\hat{L}^{-1},\hat{H})$, where $\hat{L}$ is a local time at the minimum, and the descending ladder heights process $\hat{H}=X_{\hat{L}^{-1}}$ (on $\{\hat{L}^{-1}<\infty\}$; $+\infty$ otherwise) is $X$ sampled at its right-continuous inverse $\hat{L}^{-1}$: $$\EE[e^{-\alpha \hat{L}^{-1}_1-\beta \hat{H}_1}\mathbbm{1}_{\{1<\hat{L}_\infty\}}]=e^{-\hat{\kappa}(\alpha,\beta)},\quad \{\alpha,\beta\}\subset \Crr.$$ 
\item\label{remark:ascending_descennding:ii} As for the strict ascending ladder heights subordinator $H^*:=X_{{L^*}^{-1}}$ (on ${L^*}^{-1}<\infty$; $+\infty$ otherwise), ${L^*}^{-1}$ being the right-continuous inverse of $L^*$, and $L^*$ denoting the amount of time $X$ has spent at a new maximum, we have, thanks to the skip-free property of $X$, as follows. Since $\PP(T_h<\infty)=e^{-\Phi(0)h}$, $X$ stays at a newly achieved maximum each time for an $\Exp(\measure(\RR))$-distributed amount of time, departing it to achieve a new maximum later on with probability $e^{-\Phi(0)h}$, and departing it, never to achieve a new maximum thereafter, with probability $1-e^{-\Phi(0)h}$. It follows that the Laplace exponent of $H^*$ is given by: $$-\log\EE[e^{-\beta H_1}\mathbbm{1}(H_1<+\infty)]=(1-e^{-\beta h})\measure(\RR)e^{-\Phi(0)h}+\measure(\RR)(1-e^{-\Phi(0)h})=\measure(\RR)(1-e^{-(\beta+\Phi(0))h})$$ (where $\beta\in \mathbb{R}_+$). In other words, $H^*/h$ is a killed Poisson process of intensity $\measure(\RR)e^{-\Phi(0)h}$ and with killing rate $\measure(\RR)(1-e^{-\Phi(0)h})$.
\end{enumerate}
\end{remark}
Again thanks to the skip-free nature of $X$, we can expand on the contents of Proposition~\ref{proposition:WHcompoundPoissonprocess}, by offering further details of the Wiener-Hopf factorization. Indeed, if we let $N_t:=\overline{X}_{t}/h$ and $T_k:=T_{kh}$ ($t\geq 0$, $k\in\mathbb{N}_0$) then clearly $T:=(T_k)_{k\geq 0}$ are the arrival times of a renewal process (with a possibly defective inter-arrival time distribution) and $N:=(N_t)_{t\geq 0}$ is the `number of arrivals' process. One also has the relation: $\overline{G}^*_{t}=T_{N_t}$, $t\geq 0$ ($\PP$-a.s.). Thus the random variables entering the Wiener-Hopf factorization are determined in terms of the renewal process $(T,N)$. 

Moreover, we can proceed to calculate explicitly the Wiener-Hopf factors as well as $\hat{\kappa}$ and $\kappa^*$. Let $p>0$. First, since $\overline{X}_{e_p}/h$ is a geometrically distributed random variable, we have, for any $\beta\in\Crr$: 

\begin{equation}\label{eq:wh1}
\EE[e^{-\beta \overline{X}_{e_p}}]=\sum_{k=0}^\infty e^{-\beta hk}(1-e^{-\Phi(p)h})e^{-\Phi(p)hk}=\frac{1-e^{-\Phi(p)h}}{1-e^{-\beta h-\Phi(p)h}}.
\end{equation}
Note here that $\Phi(p)>0$ for all $p>0$. On the other hand, using conditioning (for any $\alpha\geq 0$):

\begin{eqnarray*}
\EE\left[e^{-\alpha\overline{G}^*_{e_p}}\right]&=&\EE\left[\left((u,t)\mapsto \sum_{k=0}^\infty \mathbbm{1}_{[0,\infty)}(t_k) e^{-\alpha t_{k}}\mathbbm{1}_{[t_{k},t_{k+1})}(u)\right)\circ (e_p,T)\right]\\
&=&\EE\left[\left(t\mapsto \sum_{k=0}^\infty \mathbbm{1}_{[0,\infty)}(t_k)e^{-\alpha t_k}(e^{-pt_{k}}-e^{-pt_{k+1}})\right)\circ T\right],\text{ since }e_p\perp T\\
&=&\EE\left[\sum_{k=0}^\infty \mathbbm{1}_{\{T_k<\infty\}}\left(e^{-(p+\alpha)T_k}-e^{-(p+\alpha)T_k}e^{-p(T_{k+1}-T_k)}\right)\right]\\
&=&\EE\left[\sum_{k=0}^\infty e^{-(p+\alpha)T_k}\mathbbm{1}_{\{T_k<\infty\}}\left(1-e^{-p(T_{k+1}-T_k)}\right)\right].
\end{eqnarray*}
Now, conditionally on $T_k<\infty$, $T_{k+1}-T_k$ is independent of $T_k$ and has the same distribution as $T_1$. Therefore, by (\ref{eq:laplace_exponent_for_T_x}) and the theorem of Fubini:

\begin{equation}\label{eq:WH2}
\EE[e^{-\alpha\overline{G}^*_{e_p}}]=\sum_{k=0}^\infty e^{-\Phi(p+\alpha)hk}(1-e^{-\Phi(p)h})=\frac{1-e^{-\Phi(p)h}}{1-e^{-\Phi(p+\alpha)h}}.
\end{equation}
We identify from \eqref{eq:wh1} for any $\beta\in \Crr$: $\frac{\kappa^*(p,0)}{\kappa^*(p,\beta)}=\frac{1-e^{-\Phi(p)h}}{1-e^{-\beta h-\Phi(p)h}}$ and therefore for any $\alpha\geq 0$: $\frac{\kappa^*(p+\alpha,0)}{\kappa^*(p+\alpha,\beta)}=\frac{1-e^{-\Phi(p+\alpha)h}}{1-e^{-\beta h-\Phi(p+\alpha)h}}.$ We identify from \eqref{eq:WH2} for any $\alpha\geq 0$: $\frac{\kappa^*(p,0)}{\kappa^*(p+\alpha,0)}=\frac{1-e^{-h\Phi(p)}}{1-e^{-\Phi(p+\alpha)h}}.$ Therefore, multiplying the last two equalities, for $\alpha\geq 0$ and $\beta\in \Crr$, the equality: 
\begin{equation}\label{eq:WH3}
\frac{\kappa^*(p,0)}{\kappa^*(p+\alpha,\beta)}=\frac{1-e^{-\Phi(p)h}}{1-e^{-\beta h-\Phi(p+\alpha)h}}
\end{equation}
obtains. In particular, for $\alpha>0$ and $\beta\in\Crr$, we recognize for some constant $k^*\in (0,\infty)$: $\kappa^*(\alpha,\beta)=k^*(1-e^{-(\beta+\Phi(\alpha))h})$.
Next, observe that by independence and duality (for $\alpha\geq 0$ and $\theta\in\mathbb{R}$): 
\begin{eqnarray*}
&&\EE[\exp\{-\alpha\overline{G}^*_{e_p}+i\theta\overline{X}_{e_p}\}]\EE[\exp\{-\alpha\underline{G}_{e_p}+i\theta\underline{X}_{e_p}\}]=\int_0^\infty dtpe^{-pt}\EE[\exp\{-\alpha t+i\theta X_t\}]=\\
&&\int_0^\infty dtpe^{-pt-\alpha t+\Psi(\theta)t}=\frac{p}{p+\alpha-\Psi(\theta)}.
\end{eqnarray*}
Therefore: $$(p+\alpha-\psi(i\theta))\frac{\hat{\kappa}(p,0)}{\hat{\kappa}(p+\alpha,i\theta)}=p\frac{1-e^{i\theta h-\Phi(p+\alpha)h}}{1-e^{-\Phi(p)h}}.$$ Both sides of this equality are continuous in $\theta\in\Cdd$ and analytic in $\theta\in \Cd$. They agree on $\mathbb{R}$, hence agree on $\Cdd$ by analytic continuation. Therefore (for all $\alpha\geq 0$, $\beta\in\Crr$): 
\begin{equation}\label{eq:WH4}
(p+\alpha-\psi(\beta))\frac{\hat{\kappa}(p,0)}{\hat{\kappa}(p+\alpha,\beta)}=p\frac{1-e^{\beta h-\Phi(p+\alpha)h}}{1-e^{-\Phi(p)h}},
\end{equation}
i.e. for all $\beta\in \Crr$ and $\alpha\geq 0$ for which $p+\alpha\neq \psi(\beta)$ one has: $$\EE[\exp\{-\alpha\underline{G}_{e_p}+\beta\underline{X}_{e_p}\}]=\frac{p}{p+\alpha-\psi(\beta)}\frac{1-e^{(\beta -\Phi(p+\alpha))h}}{1-e^{-\Phi(p)h}}.$$ Moreover, for the unique $\beta_0>0$, for which $\psi(\beta_0)=p+\alpha$, one can take the limit $\beta\to\beta_0$ in the above to obtain: $\EE[\exp\{-\alpha\underline{G}_{e_p}+\beta_0\underline{X}_{e_p}\}]=\frac{ph}{\psi'(\beta_0)(1-e^{-\Phi(p)h})}=\frac{ph\Phi'(p+\alpha)}{1-e^{-\Phi(p)h}}$. We also recognize from (\ref{eq:WH4}) for $\alpha>0$ and $\beta\in\Crr$ with $\alpha\ne \psi(\beta)$, and some constant $\hat{k}\in (0,\infty)$:
$\hat{\kappa}(\alpha,\beta)=\hat{k}\frac{\alpha-\psi(\beta)}{1-e^{(\beta-\Phi(\alpha))h}}$. With $\beta_0=\Phi(\alpha)$ one can take the limit in the latter as $\beta\to\beta_0$ to obtain: $\hat{\kappa}(\alpha,\beta_0)=\hat{k}\psi'(\beta_0)/h=\frac{\hat{k}}{h\Phi'(\alpha)}$.

In summary: 

\begin{theorem}[Wiener-Hopf factorization for upwards skip-free L\'evy chains]\label{theorem:WienerHopfUSF}
We have the following identities in terms of $\psi$ and $\Phi$: 
\begin{enumerate}[(i)]
\item\label{theorem:USF:WHi} For every $\alpha\geq 0$ and $\beta\in\Crr$: $$\EE[\exp\{-\alpha\overline{G}^*_{e_p}-\beta \overline{X}_{e_p}\}]=\frac{1-e^{-\Phi(p)h}}{1-e^{-(\beta +\Phi(p+\alpha))h}}$$ and $$\EE[\exp\{-\alpha\underline{G}_{e_p}+\beta\underline{X}_{e_p}\}]=\frac{p}{p+\alpha-\psi(\beta)}\frac{1-e^{(\beta -\Phi(p+\alpha))h}}{1-e^{-\Phi(p)h}}$$ (the latter whenever $p+\alpha\neq \psi(\beta)$; for the unique $\beta_0>0$ such that $\psi(\beta_0)=p+\alpha$, i.e. for $\beta_0=\Phi(p+\alpha)$, one has the right-hand side given by $\frac{ph}{\psi'(\beta_0)(1-e^{-\Phi(p)h})}=\frac{ph\Phi'(p+\alpha)}{1-e^{-\Phi(p)h}}$). 
\item\label{theorem:USF:WHii} For some $\{k^*,\hat{k}\}\subset \mathbb{R}^+$ and then for every $\alpha>0$ and $\beta\in\Crr$: $$\kappa^*(\alpha,\beta)=k^*(1-e^{-(\beta+\Phi(\alpha))h})$$ and $$\hat{\kappa}(\alpha,\beta)=\hat{k}\frac{\alpha-\psi(\beta)}{1-e^{(\beta-\Phi(\alpha))h}}$$ (the latter whenever $\alpha\neq \psi(\beta)$; for the unique $\beta_0>0$ such that $\psi(\beta_0)=\alpha$, i.e. for $\beta_0=\Phi(\alpha)$, one has the right-hand side given by $\hat{k}\psi'(\beta_0)/h=\frac{\hat{k}}{h\Phi'(\alpha)}$). 
\end{enumerate}
\end{theorem}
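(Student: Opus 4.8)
The plan is to recognise that this theorem collects the explicit transforms already assembled in \eqref{eq:wh1}--\eqref{eq:WH4}, so the proof reduces to organising those computations and matching them against the abstract framework of Proposition~\ref{proposition:WHcompoundPoissonprocess}. The common engine is the renewal structure $\overline{G}^*_t=T_{N_t}$ with $N_t=\overline{X}_t/h$, which expresses every quantity of interest in terms of the first-passage times $(T_k)_{k\geq0}$, whose transforms are known in closed form from \eqref{eq:laplace_exponent_for_T_x}.

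For the ascending factor in \ref{theorem:USF:WHi} I would first obtain the \emph{joint} transform directly. On $\{N_{e_p}=k\}$ one has $\overline{X}_{e_p}=hk$ and $\overline{G}^*_{e_p}=T_k$, so inserting the extra factor $e^{-\beta hk}$ into the conditioning computation already performed for $\EE[e^{-\alpha\overline{G}^*_{e_p}}]$ in \eqref{eq:WH2} yields
\[
\EE[\exp\{-\alpha\overline{G}^*_{e_p}-\beta\overline{X}_{e_p}\}]=\sum_{k=0}^\infty(1-e^{-\Phi(p)h})e^{-(\beta+\Phi(p+\alpha))hk},
\]
a geometric series summing to the stated quotient. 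The only inputs are \eqref{eq:laplace_exponent_for_T_x}, the independence of $T_{k+1}-T_k$ from $T_k$ on $\{T_k<\infty\}$, and Fubini, exactly as before.

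For the descending factor and for \ref{theorem:USF:WHii} I would route through Proposition~\ref{proposition:WHcompoundPoissonprocess}. Comparing the ascending transform just obtained with the representation $\kappa^*(p,0)/\kappa^*(p+\alpha,\beta)$ from part~\ref{theorem:WHCP:ii} gives \eqref{eq:WH3}; separating the normalising $(p,0)$-contribution into a constant $k^*$ then lets one read off $\kappa^*(\alpha,\beta)=k^*(1-e^{-(\beta+\Phi(\alpha))h})$. To reach $\hat\kappa$ I would instead use the factorisation itself: the product of the two Wiener-Hopf factors equals $p/(p+\alpha-\Psi(\theta))$, and combined with the duality $(e_p-\overline{G}^*_{e_p},\overline{X}_{e_p}-X_{e_p})\stackrel{d}{=}(\underline{G}_{e_p},-\underline{X}_{e_p})$ of part~(i) this isolates the descending transform, producing \eqref{eq:WH4} and hence both the displayed formula for $\EE[\exp\{-\alpha\underline{G}_{e_p}+\beta\underline{X}_{e_p}\}]$ and the form $\hat\kappa(\alpha,\beta)=\hat k(\alpha-\psi(\beta))/(1-e^{(\beta-\Phi(\alpha))h})$.

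Two points will need care rather than bookkeeping. First, the factorisation identity is transparent only for purely imaginary spatial argument $i\theta$, $\theta\in\RR$; to extend it to all $\beta\in\Crr$ I would argue by analytic continuation, observing that both sides are continuous on $\Cdd$ and analytic on $\Cd$ (here the regularity of $\kappa^*,\hat\kappa$ from part~\ref{theorem:WHCP:iii}, via the $q$-potential measures, is what guarantees analyticity), so that agreement on $\RR$ forces agreement throughout the closed half-plane. Second, the descending formulae are written as quotients whose denominators $p+\alpha-\psi(\beta)$ and $\alpha-\psi(\beta)$ vanish exactly at $\beta_0=\Phi(p+\alpha)$ and $\beta_0=\Phi(\alpha)$ respectively; there the numerators vanish too, and I would resolve the removable $0/0$ by a single l'H\^opital step, using $\psi'(\beta_0)=1/\Phi'(\psi(\beta_0))$ to recover the clean values $ph/(\psi'(\beta_0)(1-e^{-\Phi(p)h}))$ and $\hat k\,\psi'(\beta_0)/h$. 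This limiting evaluation is the only genuinely non-routine ingredient.
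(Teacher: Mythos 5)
Your proposal is correct and follows essentially the same route as the paper: renewal-structure computations based on \eqref{eq:laplace_exponent_for_T_x} for the ascending factor, identification of $\kappa^*$ through Proposition~\ref{proposition:WHcompoundPoissonprocess}\ref{theorem:WHCP:ii}, then independence plus duality and analytic continuation on $\Cdd$ to extract the descending transform, \eqref{eq:WH4} and $\hat{\kappa}$, with the removable $0/0$ at $\beta_0=\Phi(p+\alpha)$ (resp.\ $\Phi(\alpha)$) resolved by a limit. The only (harmless, in fact mildly streamlining) deviation is that you compute the joint ascending transform directly by inserting $e^{-\beta hk}$ on the events $\{N_{e_p}=k\}$, whereas the paper computes the two marginals \eqref{eq:wh1} and \eqref{eq:WH2} separately and recombines them by multiplying the ratios $\kappa^*(p+\alpha,0)/\kappa^*(p+\alpha,\beta)$ and $\kappa^*(p,0)/\kappa^*(p+\alpha,0)$.
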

As a consequence of Theorem~\ref{theorem:WienerHopfUSF}\ref{theorem:USF:WHi}, we obtain the formula for the Laplace transform of the running infimum evaluated at an independent exponentially distributed random time:
\begin{equation}\label{eq:running_infimum_ep}
\EE[e^{\beta \underline{X}_{e_p}}]=\frac{p}{p-\psi(\beta)}\frac{1-e^{(\beta-\Phi(p))h}}{1-e^{-\Phi(p)h}}\hspace{0.5cm}(\beta\in \mathbb{R}_+\backslash \{\Phi(p)\})
\end{equation}
(and $\EE[e^{\Phi(p) \underline{X}_{e_p}}]=\frac{p\Phi'(p)h}{1-e^{-\Phi(p)h}}$). In particular, if $\psi'(0+)>0$, then letting $p\downarrow 0$ in \eqref{eq:running_infimum_ep}, one obtains by the DCT: 
\begin{equation}\label{eq:laplace_for_overall_infimum}
\EE[e^{\beta \underline{X}_\infty}]=\frac{e^{\beta h}-1}{\Phi'(0+)h\psi(\beta)}\hspace{0.5cm}(\beta>0).
\end{equation}

We obtain next from Theorem~\ref{theorem:WienerHopfUSF}\ref{theorem:USF:WHii} (recall also Remark~\ref{remark:ascending_descennding}~\ref{remark:ascending_descennding:i}), by letting $\alpha\downarrow 0$ therein, the Laplace exponent $\phi(\beta):=-\log \EE [e^{-\beta \hat{H}_1}\mathbbm{1}(\hat{H}_1<\infty)]$ of the descending ladder heights process $\hat{H}$: 
\begin{equation}\label{eq:subordinator}
\phi(\beta)(e^{\beta h}-e^{\Phi(0)h})=\psi(\beta),\quad \beta\in \mathbb{R}_+,
\end{equation} where we have set for simplicity $\hat{k}=e^{-\Phi(0)h}$, by insisting on a suitable choice of the local time at the minimum. This gives the following characterization of the class of Laplace exponents of the descending ladder heights processes of upwards skip-free L\'evy chains (cf. \cite[Theorem~1]{hubalek}):

\begin{theorem}\label{theorem:subordinator}
Let $h\in (0,\infty)$, $\{\gamma,q\}\subset \mathbb{R}_+$, and $(\phi_k)_{k\in \mathbb{N}}\subset \mathbb{R}_+$, with $q+\sum_{k\in \mathbb{N}}\phi_k\in (0,\infty)$. Then:
\begin{quote}
There exists (in law) an upwards-skip free L\'evy chain $X$ with values in $\Zh$ and with (i) $\gamma$ being the killing rate of its strict ascending ladder heights process (see Remark~\ref{remark:ascending_descennding}\ref{remark:ascending_descennding:ii}), and (ii) $\phi(\beta)=q+\sum_{k=1}^\infty\phi_k(1-e^{-\beta kh})$, $\beta\in \mathbb{R}_+$, being the Laplace exponent of its descending ladder heights process.
\end{quote}
if and only if the following conditions are satisfied: 
\begin{enumerate}[(1)]
\item $\gamma q=0$.
\item\label{subordinator:cond:2} Setting $x$ equal to $1$, when $\gamma=0$, or to the unique solution of the equation: $$\gamma=(1-1/x)\left(\phi_1+x\sum_{k\in \mathbb{N}}\phi_k\right)$$ on the interval $x\in (1,\infty)$, otherwise\footnote{It is part of the condition, that such an $x$ should exist (automatically, given the preceding assumptions, there is at most one).
}; and then defining $\lambda_1:=q+\sum_{k\in \mathbb{N}}\phi_k$, $\lambda_{-k}:=x\phi_k-\phi_{k+1}$, $k\in \mathbb{N}$; it holds: 
$$\lambda_{-k}\geq 0,\quad k\in \mathbb{N}.$$
\end{enumerate}
Such an $X$ is then unique (in law), is called the parent process, its L\'evy measure is given by $\sum_{k\in \mathbb{N}}\lambda_{-k}\delta_{-kh}+\lambda_1\delta_{h}$, and $x=e^{\Phi(0)h}$.
\end{theorem}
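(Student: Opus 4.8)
The whole statement reduces to an algebraic comparison inside the identity
$$\phi(\beta)\bigl(e^{\beta h}-e^{\Phi(0)h}\bigr)=\psi(\beta),\qquad \beta\in\mathbb{R}_+,$$
which is exactly \eqref{eq:subordinator}, together with the fact, recorded in Remark~\ref{remark:ascending_descennding}\ref{remark:ascending_descennding:ii}, that the killing rate of the strict ascending ladder heights process equals $\lambda(\mathbb{R})(1-e^{-\Phi(0)h})$. The plan is to set $x:=e^{\Phi(0)h}\geq 1$ and to write the (necessarily lattice-supported) L\'evy measure of any such chain as $\lambda=\lambda_1\delta_h+\sum_{k\in\mathbb{N}}\lambda_{-k}\delta_{-kh}$ with $\lambda_1=\lambda(\{h\})>0$ and $\lambda_{-k}=\lambda(\{-kh\})\geq 0$, so that $\psi(\beta)=\lambda_1(e^{\beta h}-1)+\sum_{k}\lambda_{-k}(e^{-\beta kh}-1)$. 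Writing $S:=\sum_{k}\phi_k<\infty$ and substituting $u:=e^{\beta h}$, both $\psi$ and $\phi(\beta)(u-x)$ become convergent series in the single variable $u$ (terms $u,\,1,\,u^{-1},u^{-2},\dots$), and the proof is essentially the bookkeeping of matching these coefficients.

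For the forward implication I would assume such a chain exists and expand the identity above. Matching the coefficient of $u$ gives $\lambda_1=q+S$; matching the coefficient of $u^{-k}$ gives $\lambda_{-k}=x\phi_k-\phi_{k+1}$ ($k\in\mathbb{N}$); and matching the constant term, after substituting these two relations, collapses to $q(x-1)=0$. The extraction of coefficients is legitimate because, letting $u\to\infty$, the coefficient of $u$ and then the constant term are read off as limits, after which the remaining power series in $1/u$ vanishes on an interval and hence identically. Since the $\lambda_{-k}$ are masses of a measure they are automatically nonnegative, which is the inequality in condition~\ref{subordinator:cond:2}. Finally I would feed $q(x-1)=0$ into $\gamma=\lambda(\mathbb{R})(1-1/x)$: if $x=1$ then $\gamma=0$, and if $x>1$ then $q=0$; either way $\gamma q=0$, which is condition~(1). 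In the case $\gamma=0$, $\lambda(\mathbb{R})>0$ forces $x=1$; in the case $\gamma>0$, condition~(1) gives $q=0$, whence $\lambda(\mathbb{R})=\lambda_1+\sum_k\lambda_{-k}=\phi_1+xS$ and $\gamma=(1-1/x)(\phi_1+xS)$ with $x>1$ --- precisely the defining equation for $x$ in condition~\ref{subordinator:cond:2}.

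For the converse I would start from $\gamma,q,(\phi_k)$ obeying (1) and \ref{subordinator:cond:2}, define $x$ by the prescription there, set $\lambda_1:=q+S$ and $\lambda_{-k}:=x\phi_k-\phi_{k+1}$, and let $\lambda:=\lambda_1\delta_h+\sum_k\lambda_{-k}\delta_{-kh}$. One checks at once that $\lambda$ is a finite nonnegative measure with $\lambda_1>0$ and positive part supported on $\{h\}$, so it is the L\'evy measure of a unique-in-law upwards skip-free L\'evy chain $X$. Because $q(x-1)=0$ holds (again by the $x=1$/$x>1$ dichotomy and condition~(1)), the three coefficient relations above hold by construction, so $\psi(\beta)=\phi(\beta)(e^{\beta h}-x)$ identically. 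Since $\phi(\beta)>0$ for $\beta>0$ (as $q+S>0$), the positive zeros of $\psi$ are exactly the solutions of $e^{\beta h}=x$; using the strict convexity of $\psi\vert_{[0,\infty)}$ established earlier, this identifies $\Phi(0)=\log x/h$ as the \emph{largest} root, i.e. $e^{\Phi(0)h}=x$. Then \eqref{eq:subordinator} returns $\phi$ as the descending ladder exponent, and Remark~\ref{remark:ascending_descennding}\ref{remark:ascending_descennding:ii} returns $\lambda(\mathbb{R})(1-1/x)=\gamma$ as the ascending killing rate (checking the $x=1$ and $x>1$ cases separately, the latter using $q=0$ and the defining equation). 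Uniqueness in law follows since the forward matching shows any admissible chain must have this exact $\lambda$.

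The routine part is the coefficient matching; the points needing care are (a) that $x=e^{\Phi(0)h}$ computed from the chain really coincides with the externally prescribed $x$, which is where the unique solvability of $\gamma=(1-1/x)(\phi_1+xS)$ on $(1,\infty)$ enters --- I would prove it by noting $g(x):=(1-1/x)(\phi_1+xS)$ satisfies $g(1)=0$ and $g'(x)=S+\phi_1/x^2>0$ (here $\gamma>0$ forces $q=0$, hence $S>0$), so $g$ increases bijectively from $0$ to $+\infty$; and (b) the correct identification of $\Phi(0)$ as the largest root of $\psi$, together with the normalization $\hat{k}=e^{-\Phi(0)h}$ under which \eqref{eq:subordinator} was derived. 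I expect (a), i.e. reconciling the intrinsic $e^{\Phi(0)h}$ with the algebraically defined $x$ across the two regimes $\gamma=0$ and $\gamma>0$, to be the main obstacle; everything else is bookkeeping driven by \eqref{eq:subordinator}.
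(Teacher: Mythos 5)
Your proof is correct and follows essentially the same route as the paper's: both reduce the statement to the identity \eqref{eq:subordinator}, match the Fourier/power-series coefficients (in $u=e^{\beta h}$) to obtain the three relations $\lambda(\{h\})=q+\sum_k\phi_k$, $\lambda(\mathbb{R})=x(q+\sum_k\phi_k)+\phi_1$, $\lambda(\{-kh\})=x\phi_k-\phi_{k+1}$, and observe that these jointly force $q(x-1)=0$. The only minor divergences are that the paper gets the necessity of $\gamma q=0$ from the probabilistic fact that the ascending and descending ladder processes cannot both have strictly positive killing rates, whereas you derive it algebraically from $q(x-1)=0$ together with $\gamma=\lambda(\mathbb{R})(1-1/x)$, and in the sufficiency step with $\gamma=0$ the paper computes $\psi'(0+)=hq\geq 0$ to conclude $\Phi(0)=0$, whereas you read this off from the factorization $\psi(\beta)=\phi(\beta)(e^{\beta h}-x)$ and the positivity of $\phi$ on $(0,\infty)$ --- both variants are sound.
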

\begin{remark}
Condition Theorem~\ref{theorem:subordinator}\ref{subordinator:cond:2} is actually quite explicit. When $\gamma=0$ (equivalently, the parent process does not drift to $-\infty$), it simply says that the sequence $(\phi_k)_{k\in \mathbb{N}}$ should be nonincreasing. In the case when the parent process $X$ drifts to $-\infty$ (equivalently, $\gamma>0$ (hence $q=0$)), we might choose $x\in (1,\infty)$ first, then $(\phi_k)_{k\geq 1}$, and finally $\gamma$.
\end{remark}
\begin{proof}
Note that with $\phi(\beta)=:q+\sum_{k=1}^\infty\phi_k(1-e^{-\beta kh})$, $x:=e^{\Phi(0)h}$, and comparing the respective Fourier components of the left and the right hand-side, \eqref{eq:subordinator} is equivalent to: 
\begin{enumerate}
\item\label{subordinator:1:p} $q+\sum_{k\in \mathbb{N}}\phi_k=\lambda(\{h\})$.
\item $x(q+\sum_{k\in \mathbb{N}}\phi_k)+\phi_1=\measure(\RR)$.
\item\label{subordinator:3:p} $x\phi_k-\phi_{k+1}=\measure(\{-kh\})$, $k\in \mathbb{N}$.
\end{enumerate}
Moreover, the killing rate of the strict ascending ladder heights processes expresses as $\measure(\RR)(1-1/x)$, whereas \eqref{subordinator:1:p} and \eqref{subordinator:3:p} alone, together imply $q+x\sum_{k\in \mathbb{N}}\phi_k+\phi_1=\measure(\RR)$.

Necessity of the conditions. Remark that the strict ascending ladder heights and the descending ladder heights processes cannot simultaneously have a strictly positive killing rate. Everything else is trivial from the above (in particular, we obtain that such an $X$, when it exists, is unique, and has the stipulated L\'evy measure and $\Phi(0)$).

Sufficiency of the conditions. The compound Poisson process $X$ whose L\'evy measure is given by $\measure=\sum_{k\in \mathbb{N}}\lambda_{-k}\delta_{-kh}+\lambda_1\delta_{h}$ (and whose Laplace exponent we shall denote $\psi$, likewise the largest zero of $\psi$ will be denoted $\Phi(0)$) constitutes an upwards skip-free L\'evy chain. Moreover, since $x=1$, unless $q=0$, we obtain either way that $\phi(\beta)(e^{\beta h}-x)=\psi(\beta)$ with $\phi(\beta):=q+\sum_{k=1}^\infty\phi_k(1-e^{-\beta kh})$, $\beta\geq 0$. Substituting in this relation $\beta:=(\log x)/h$, we obtain at once that if $\gamma>0$ (so $q=0$), that then $X$ drifts to $-\infty$, $x=e^{\Phi(0)h}$, and hence $\gamma=(1-e^{-\Phi(0)})\measure(\RR)$ is the killing rate of the strict ascending ladder heights process. On the other hand, when $\gamma=0$, then $x=1$, and a direct computation reveals $\psi'(0+)=h\lambda_1-\sum_{k\in \mathbb{N}}kh(\phi_k-\phi_{k+1})=h(\lambda_1-\sum_{k\in \mathbb{N}}\phi_k)=hq\geq 0$. So $X$ does not drift to $-\infty$, and $\Phi(0)=0$, whence (again) $x=e^{\Phi(0)h}$. Also in this case, the killing rate of the strict ascending ladder heights process is $0=(1-x)\measure(\RR)$. Finally, and regardless of whether $\gamma$ is strictly positive or not, comparing to \eqref{eq:subordinator}, we conclude that $\phi$ is indeed the Laplace exponent of the descending ladder heights process of $X$. 
\end{proof}

\section{Theory of scale functions}\label{section:Theory_of_scale_functions}
Again the reader is invited to compare the exposition of the following section with that of \cite[Section VII.2]{bertoin} and \cite[Section 8.2]{kyprianou}, which deal with the spectrally negative case. 

\subsection{The scale function $\sc$}
It will be convenient to consider in this subsection the times at which $X$ attains a new maximum. We let $D_1$, $D_2$ and so on, denote the depths (possibly zero, or infinity) of the excursions below these new maxima. For $k\in \mathbb{N}$, it is agreed that $D_k=+\infty$ if the process $X$ never reaches the level $(k-1)h$. Then it is clear that for $y\in \Zh^+$, $x\geq 0$ (cf. \cite[p. 137, Paragraph~6.2.4(a)]{buhlman} \cite[Section 9.3]{doney}):
\begin{eqnarray*} 
&&\PP(\underline{X}_{T_{y}}\geq -x)=\PP(D_1\leq x,D_2\leq x+h,\ldots,D_{y/h}\leq x+y-h)=\\
&&\PP(D_1\leq x)\cdot \PP(D_1\leq x+h)\cdots \PP(D_1\leq x+y-h)=\frac{\prod_{r=1}^{\lfloor (y+x)/h\rfloor }\PP(D_1\leq (r-1)h)}{\prod_{r=1}^{\lfloor x/h\rfloor h}\PP(D_1\leq (r-1)h)}=\frac{\sc(x)}{\sc(x+y)},
\end{eqnarray*}
where we have introduced (up to a multiplicative constant) the \emph{scale function}: 
\begin{equation}\label{eq:scale_function}
\sc(x):=1/\prod_{r=1}^{\lfloor x/h\rfloor}\PP(D_1\leq (r-1)h)\hspace{0.5cm}(x\geq 0).
\end{equation}
(When convenient, we extend $\sc$ by $0$ on $(-\infty,0)$.)
\begin{remark}
If needed, we can of course express $\PP(D_1\leq hk)$, $k\in\mathbb{N}_0$, in terms of the usual excursions away from the maximum. Thus, let $\tilde{D}_1$ be the depth of the first excursion away from the current maximum. By the time the process attains a new maximum (that is to say $h$), conditionally on this event, it will make a total of $N$ departures away from the maximum, where (with $J_1$ the first jump time of $X$, $p:=\measure(\{h\})/\measure(\mathbb{R})$, $\tilde{p}:=\PP(X_{J_1}=h\vert T_h<\infty)=p/\PP(T_h<\infty)$) $N\sim \geom(\tilde{p})$. So, denoting $\tilde{\theta}_k:=\PP(\tilde{D}_1\leq hk)$, one has $\PP(D_1\leq hk)=\PP(T_h<\infty) \sum_{l=0}^\infty \tilde{p}(1-\tilde{p})^l\tilde{\theta}_k^l=\frac{p}{1-(1-e^{\Phi(0)h}p)\tilde{\theta}_k}$, $k\in\mathbb{N}_0$. 
\end{remark}

The following theorem characterizes the scale function in terms of its Laplace transform. 

\begin{theorem}[The scale function]\label{theorem:thescalefnc}
For every $y\in \Zh^+$ and $x\geq 0$ one has: 
\begin{equation}\label{theorem:eq:scalefunction}
\PP(\underline{X}_{T_{y}}\geq -x)=\frac{\sc(x)}{\sc(x+y)}
\end{equation}
and $\sc:[0,\infty)\to [0,\infty)$ is (up to a multiplicative constant) the unique right-continuous and piecewise continuous function of exponential order with Laplace transform: 
\begin{equation}\label{eq:scale_function_laplace}
\hat{\sc}(\beta)=\int_0 ^\infty e^{-\beta x}\sc(x)dx=\frac{e^{\beta h}-1}{\beta h\psi(\beta)}\hspace{0.5cm}(\beta>\Phi(0)).
\end{equation}
\end{theorem}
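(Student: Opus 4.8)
The plan is to establish the two assertions of the theorem separately: first the probabilistic identity \eqref{theorem:eq:scalefunction}, then the Laplace transform computation \eqref{eq:scale_function_laplace} together with the uniqueness claim. The identity \eqref{theorem:eq:scalefunction} is, in fact, already derived in the display immediately preceding the definition \eqref{eq:scale_function} of $\sc$; the key input is the strong Markov property combined with the skip-free nature of $X$, which forces the process to pass continuously (in the lattice sense) through every intermediate level $0,h,2h,\dots$ on its way up to $y$. At the $r$-th new maximum, the probability of never dipping below $-x$ relative to that maximum equals $\PP(D_1\le (r-1)h+x)$, because the depths $D_1,D_2,\dots$ of successive excursions below new maxima are independent and identically distributed (each equal in law to $D_1$), by the strong Markov property applied at the successive first-passage times $T_{rh}$. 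Telescoping the resulting product yields exactly $\sc(x)/\sc(x+y)$, so I would present \eqref{theorem:eq:scalefunction} as an immediate consequence of the computation already carried out, perhaps stated more carefully to justify the i.i.d.\ claim for the depths.

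Next I would turn to the Laplace transform. The function $\sc$ defined in \eqref{eq:scale_function} is constant on each interval $[kh,(k+1)h)$, right-continuous, and nondecreasing, so it is a step function. To compute $\hat\sc$, I would connect $\sc$ to a quantity whose Laplace transform is already known, rather than attacking the product formula directly. The natural bridge is \eqref{eq:laplace_for_overall_infimum}, which gives $\EE[e^{\beta\underline{X}_\infty}]=(e^{\beta h}-1)/(\Phi'(0+)h\psi(\beta))$ in the case $\psi'(0+)>0$; the target right-hand side of \eqref{eq:scale_function_laplace} has exactly the same shape. Under the drift assumption $\Phi(0)=0$, letting $y\to\infty$ in \eqref{theorem:eq:scalefunction} shows $\PP(\underline{X}_\infty\ge -x)=\sc(x)/\sc(+\infty)$, identifying $\sc$ (up to normalization) with the distribution function of $-\underline{X}_\infty$; integrating $e^{-\beta x}$ against this distribution function and comparing with \eqref{eq:laplace_for_overall_infimum} then pins down $\hat\sc$. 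For the general case I would invoke the exponential change of measure of Proposition~\ref{proposition:exponential_change_of_measure}: under $\PPm=\PP_{\Phi(0)}$ the process drifts to $+\infty$ with Laplace exponent $\psi_{\Phi(0)}(\beta)=\psi(\beta+\Phi(0))$, and the scale function transforms in a controlled, explicit way, so the formula \eqref{eq:scale_function_laplace} for $\beta>\Phi(0)$ follows from the drifting case applied to the tilted process after a shift of the Laplace variable.

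The uniqueness assertion is the cleanest part: a right-continuous, piecewise-continuous function of exponential order is determined on $[0,\infty)$ by its Laplace transform via the standard inversion/injectivity theorem for Laplace transforms, so any two such functions sharing the transform \eqref{eq:scale_function_laplace} must agree Lebesgue-a.e., and right-continuity upgrades this to pointwise equality up to the stated multiplicative constant. I would also remark that the right-hand side of \eqref{eq:scale_function_laplace} is finite and analytic for $\beta>\Phi(0)$ precisely because $\psi(\beta)>0$ there (as $\Phi(0)$ is the largest root of $\psi\vert_{[0,\infty)}$), so the transform is well defined on the claimed half-line.

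The main obstacle I anticipate is handling the general case uniformly rather than only the $\psi'(0+)>0$ subcase: the clean probabilistic reading of $\sc$ as the law of $-\underline{X}_\infty$ is only valid when $X$ drifts to $+\infty$, and extending to the oscillating and negatively-drifting regimes requires care. I would expect to discharge this either through the change of measure as sketched — verifying that the depth variables $D_k$ and hence $\sc$ transform compatibly with the tilting by $e^{\Phi(0)x}$ — or, more cleanly, by observing that both sides of \eqref{eq:scale_function_laplace} are determined by $\psi$ alone, so that proving the identity for the dense/open family of drift-to-$+\infty$ chains and then arguing by the explicit functional dependence on $\psi$ (or by the approximation in Remark~\ref{remark:approximate_oscillating_case}) covers all cases. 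Verifying that the tilting indeed sends $\sc$ to the scale function of the tilted process, with the correct $\Phi(0)$-shift, is the technical crux.
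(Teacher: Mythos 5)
Your outline reproduces the paper's own three-case strategy --- telescoping excursion depths for \eqref{theorem:eq:scalefunction}; identification of $\sc$ with the distribution function of $-\underline{X}_\infty$ via \eqref{eq:laplace_for_overall_infimum} when $\psi'(0+)>0$; exponential tilting when $\Phi(0)>0$; Laplace-transform injectivity for uniqueness --- but one step would fail as stated. You claim the ``general case'' follows from the change of measure $\PPm=\PP_{\Phi(0)}$; in the oscillating regime, however, $\Phi(0)=\psi'(0+)=0$, so this tilting is the \emph{identity} and the tilted process still oscillates (Proposition~\ref{proposition:exponential_change_of_measure} explicitly excepts $c=\psi'(0+)=0$ from the drift-to-$+\infty$ conclusion). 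Hence the oscillating case cannot be reached from the drifting case by tilting at all. Of the two fallbacks in your closing paragraph, the one based on ``both sides being determined by $\psi$ alone'' plus a density argument is not a proof: the left-hand side of \eqref{eq:scale_function_laplace} is defined through the law of $X$, and continuity of the (normalized) scale function in the underlying $\psi$ is precisely what must be established. Your parenthetical citation of Remark~\ref{remark:approximate_oscillating_case} is in fact the only workable route among those you propose, and it is the paper's: since $X^\epsilon=X+hN^\epsilon\geq X$ pathwise and $X^\epsilon\downarrow X$ as $\epsilon \downarrow 0$, one has $1/\PP(\underline{X^\epsilon}_{T_y^\epsilon}=0)\uparrow \sc(y)/\sc(0)$ for $y\in\Zh^+$, whence by the MCT the normalized Laplace transforms converge and the constant prefactor on the right-hand side converges as well; this monotone-coupling argument is the missing content.

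Second, the ``technical crux'' you flag --- that tilting sends the scale function of $\PPm$ to that of $\PP$ --- is resolved in the paper by the relation $\tilde{\sc}(x)=e^{\Phi(0)\lfloor 1+x/h\rfloor h}\,\sc^\natural(x)$, and the lattice step factor (rather than your proposed $e^{\Phi(0)x}$) genuinely matters. Both choices satisfy the probabilistic identity \eqref{theorem:eq:scalefunction}, because for $y\in\Zh^+$ each factor contributes the same $e^{-\Phi(0)y}$ to the ratio; but $e^{\Phi(0)x}\sc^\natural(x)$ is not piecewise constant, and its Laplace transform equals $\frac{e^{(\beta-\Phi(0))h}-1}{(\beta-\Phi(0))h\,\psi(\beta)}$, not the asserted $\frac{e^{\beta h}-1}{\beta h\,\psi(\beta)}$. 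Since $\sc$ as defined in \eqref{eq:scale_function} is the right-continuous piecewise-constant extension of its lattice values, obtaining \eqref{eq:scale_function_laplace} requires tilting with the step function and then performing the geometric summation over the intervals $[kh,(k+1)h)$, as in the final display of the paper's proof.
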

\begin{proof}
(For uniqueness see e.g. \cite[p. 14, Theorem 10]{engelberg}. It is clear that $\sc$ is of exponential order, simply from the definition (\ref{eq:scale_function}).)

Suppose first $X$ tends to $+\infty$. Then, letting $y\to\infty$ in (\ref{theorem:eq:scalefunction}) above, we obtain $\PP(-\underline{X}_\infty\leq x)=\sc(x)/\sc(+\infty)$. Here, since the left-hand side limit exists by the DCT, is finite and non-zero at least for all large enough $x$, so does the right-hand side, and $\sc(+\infty)\in (0,\infty)$. 

Therefore $\sc(x)=\sc(+\infty)\PP(-\underline{X}_\infty\leq x)$ and hence the Laplace-Stieltjes transform of $\sc$ is given by (\ref{eq:laplace_for_overall_infimum}) --- here we consider $\sc$ as being extended by $0$ on $(-\infty,0)$:

\begin{equation*}
\int_{[0,\infty)} e^{-\beta x}d\sc(x)=\sc(+\infty)\frac{e^{\beta h}-1}{\Phi'(0+)h\psi(\beta)}\hspace{0.5cm}(\beta>0).
\end{equation*}
Since (integration by parts \cite[Chapter 0, Proposition~4.5]{revuzyor}) $\int_{[0,\infty)}e^{-\beta x}d\sc(x)=\beta\int_{(0,\infty)}e^{-\beta x}\sc(x)dx$,
\begin{equation}\label{eq:laplace_transofrm_drifts_to_inifinity}
\int_{0}^\infty e^{-\beta x}\sc(x)dx=\frac{\sc(+\infty)}{\Phi'(0+)}\frac{e^{\beta h}-1}{\beta h\psi(\beta)}\hspace{0.5cm}(\beta>0).
\end{equation}
Suppose now that $X$ oscillates. Via Remark~\ref{remark:approximate_oscillating_case}, approximate $X$ by the processes $X^\epsilon$, $\epsilon>0$. In \eqref{eq:laplace_transofrm_drifts_to_inifinity}, fix $\beta$, carry over everything except for $\frac{\sc(+\infty)}{\Phi'(0+)}$, divide both sides by $\sc(0)$, and then apply this equality to $X^\epsilon$. Then on the left-hand side, the quantities pertaining to $X^\epsilon$ will converge to the ones for the process $X$ as $\epsilon\downarrow 0$ by the MCT. Indeed, for $y\in \Zh^+$, $\PP(\underline{X}_{T_y}=0)=\sc(0)/\sc(y)$ and (in the obvious notation): $1/\PP(\underline{X^\epsilon}_{T_y^\epsilon}=0)\uparrow 1/\PP(\underline{X}_{T_y}=0)=\sc(y)/\sc(0)$, since $X^\epsilon\downarrow  X$, uniformly on bounded time sets, almost surely as $\epsilon\downarrow 0$. (It is enough to have convergence for $y\in \Zh^+$, as this implies convergence for all $y\geq 0$, $\sc$ being the right-continuous piecewise constant extension of $\sc\vert_{\Zh^+}$.) Thus we obtain in the oscillating case, for some $\alpha\in (0,\infty)$ which is the limit of the right-hand side as $\epsilon\downarrow 0$:
\begin{equation}\label{eq:laplace_transofrm_oscillates}
\int_{0}^\infty e^{-\beta x}\sc(x)dx=\alpha \frac{e^{\beta h}-1}{\beta h\psi(\beta)}\hspace{0.5cm}(\beta>0).
\end{equation}
Finally, we are left with the case when $X$ drifts to $-\infty$. We treat this case by a change of measure (see Proposition~\ref{proposition:exponential_change_of_measure} and the paragraph immediately preceding it). To this end assume, provisionally, that $X$ is already the coordinate process on the canonical filtered space $\mathbb{D}_h$. Then we calculate by Proposition~\ref{proposition:conditioned_to_drift_to_infinity}\ref{lemma:drifttoinfty:two} (for $y\in \Zh^+$, $x\geq 0$):
\begin{eqnarray*}
&&\PP(\underline{X}_{T_y}\geq -x)=\PP(T_y<\infty)\PP(\underline{X}_{T_y}\geq -x\vert T_y<\infty)=e^{-\Phi(0)y}\PP(\underline{X^{T_y}}_{\infty}\geq -x\vert T_y<\infty)=\\
&&e^{-\Phi(0)y}\PPm(\underline{X^{T_y}}_{\infty}\geq -x)=e^{-\Phi(0)y}\PPm(\underline{X}_{T(y)}\geq -x)=e^{-\Phi(0)y}\sc^\natural(x)/\sc^\natural(x+y),
\end{eqnarray*}
where the third equality uses the fact that $(\omega\mapsto \inf\{\omega(s):s\in [0,\infty)\}):(\mathbb{D}_h,\mathcal{F})\to ([-\infty,\infty),\mathcal{B}([-\infty,\infty))$ is a measurable transformation. Here $\sc^\natural$ is the scale function corresponding to $X$ under the measure $\PPm$, with Laplace transform:
\begin{equation*}
\int_{0}^\infty e^{-\beta x}\sc^\natural(x)dx=\frac{e^{\beta h}-1}{\beta h\psi(\Phi(0)+\beta)}\hspace{0.5cm}(\beta>0).
\end{equation*}
Note that the equality $\PP(\underline{X}_{T_y}\geq -x)=e^{-\Phi(0)y}\sc^\natural(x)/\sc^\natural(x+y)$ remains true if we revert back to our original $X$ (no longer assumed to be in its canonical guise). This is so because we can always go from $X$ to its canonical counter-part by taking an image measure. Then the law of the process, hence the Laplace exponent and the probability $\PP(\underline{X}_{T_y}\geq -x)$ do not change in this transformation.

Now define $\tilde{\sc}(x):=e^{\Phi(0)\lfloor1+x/h\rfloor h}\sc^\natural(x)$ ($x\geq 0$). Then $\tilde{\sc}$ is the right-continuous piecewise-constant extension of $\tilde{\sc}\vert_{\Zh^+}$. Moreover, for all $y\in \Zh^+$ and $x\geq 0$, \eqref{theorem:eq:scalefunction} obtains with $\sc$ replaced by $\tilde{\sc}$. Plugging in $x=0$ into \eqref{theorem:eq:scalefunction},  $\tilde{\sc}\vert_{\Zh}$ and  $\sc\vert_{\Zh}$ coincide up to a multiplicative constant, hence $\tilde{\sc}$ and $\sc$ do as well. Moreover, for all $\beta>\Phi(0)$, by the MCT:
\begin{eqnarray*}
\int_0^\infty e^{-\beta x}\tilde{\sc}(x)dx&=&e^{\Phi(0)h}\sum_{k=0}^\infty  \int_{kh}^{(k+1)h}e^{-\beta x}e^{\Phi(0)kh}\sc^\natural(kh)dx\\
&=&e^{\Phi(0)h}\sum_{k=0}^\infty \frac{1}{\beta}e^{-\beta kh}(1-e^{-\beta h})e^{\Phi(0)kh}\sc^\natural(kh)\\
&=&e^{\Phi(0)h}\frac{\beta-\Phi(0)}{\beta}\frac{1-e^{-\beta h}}{1-e^{-(\beta-\Phi(0))h}}\int_0^\infty e^{-(\beta-\Phi(0)) x}\sc^\natural(x)dx\\
&=&e^{\Phi(0)h}\frac{\beta-\Phi(0)}{\beta}\frac{1-e^{-\beta h}}{1-e^{-(\beta-\Phi(0))h}}\frac{e^{(\beta-\Phi(0))h}-1}{(\beta-\Phi(0))h\psi(\beta)}=\frac{(e^{\beta h}-1)}{\beta h\psi(\beta)}.
\end{eqnarray*}
\end{proof}

\begin{remark}
Henceforth the normalization of the scale function $\sc$ will be understood so as to enforce the validity of \eqref{eq:scale_function_laplace}. 
\end{remark}

\begin{proposition}\label{proposition:scale0andINF}
$\sc(0)=1/(h\lambda(\{h\}))$, and $\sc(+\infty)=1/\psi'(0+)$ if $\Phi(0)=0$. If $\Phi(0)>0$, then $\sc(+\infty)=+\infty$.
\end{proposition}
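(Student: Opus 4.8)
The plan is to extract both values directly from the Laplace-transform characterization \eqref{eq:scale_function_laplace}, namely $\hat{\sc}(\beta)=\frac{e^{\beta h}-1}{\beta h\psi(\beta)}$ for $\beta>\Phi(0)$, using standard Abelian/Tauberian asymptotics. The two claims concern $\sc(0)$ (behaviour of $\hat{\sc}$ as $\beta\to\infty$) and $\sc(+\infty)$ (behaviour as $\beta\downarrow\Phi(0)$), so I would treat them separately.

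For $\sc(0)$: since $\sc$ is right-continuous and piecewise constant on the grid $\Zh$, with $\sc(x)=\sc(0)$ on $[0,h)$, I would compute $\lim_{\beta\to\infty}\beta\hat{\sc}(\beta)$. From the explicit formula, $\beta\hat{\sc}(\beta)=\frac{e^{\beta h}-1}{h\psi(\beta)}$. Recall $\psi(\beta)=\int(e^{\beta x}-1)\lambda(dx)$; as $\beta\to\infty$ the atom at $h$ dominates (this was already noted in the text, that $\psi(\beta)\to+\infty$ because of the atom at $h$), so $\psi(\beta)\sim \lambda(\{h\})e^{\beta h}$. Hence $\beta\hat{\sc}(\beta)\to \frac{1}{h\lambda(\{h\})}$. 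On the other hand, because $\sc$ is constant equal to $\sc(0)$ near $0$, the elementary Abelian limit gives $\lim_{\beta\to\infty}\beta\hat{\sc}(\beta)=\sc(0)$. Equating yields $\sc(0)=1/(h\lambda(\{h\}))$. I would make the Abelian step rigorous by splitting $\int_0^\infty e^{-\beta x}\sc(x)\,dx = \sc(0)\int_0^h e^{-\beta x}dx + \int_h^\infty e^{-\beta x}\sc(x)\,dx$ and bounding the tail using the exponential order of $\sc$.

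For $\sc(+\infty)$: the value is governed by $\lim_{\beta\downarrow\Phi(0)}\beta\hat{\sc}(\beta)$ (the final-value behaviour). When $\Phi(0)=0$, $\beta\hat{\sc}(\beta)=\frac{e^{\beta h}-1}{h\psi(\beta)}$ and I would let $\beta\downarrow 0$: the numerator behaves like $\beta h$ while $\psi(\beta)\sim \psi'(0+)\beta$ (using $\psi(0)=0$ and the differentiability established earlier, with $\psi'(0+)\in(0,\infty)$ in the oscillating/drifting-up subcase relevant here), giving the limit $1/\psi'(0+)$. Since in the $\Phi(0)=0$ case $X$ does not drift to $-\infty$, one has $-\underline{X}_\infty<\infty$ only when $\psi'(0+)>0$; more directly, the text already derived $\sc(x)=\sc(+\infty)\PP(-\underline{X}_\infty\le x)$ in the drifting-up case, and the normalization fixes the constant so that $\sc(+\infty)=\lim_{\beta\downarrow 0}\beta\hat{\sc}(\beta)=1/\psi'(0+)$. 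I would invoke a Tauberian/monotone-limit argument (or simply the identity $\sc(+\infty)=\lim_{\beta\downarrow 0}\beta\hat{\sc}(\beta)$ valid because $\sc$ is nondecreasing with a finite limit) to justify passing the limit. When $\Phi(0)>0$, the denominator $\psi(\beta)\to\psi(\Phi(0))=0$ as $\beta\downarrow\Phi(0)$ while the numerator $e^{\Phi(0)h}-1>0$ stays bounded away from $0$, so $\hat{\sc}(\beta)\to+\infty$; since $\sc$ is nondecreasing, a divergent Laplace transform at the abscissa forces $\sc(+\infty)=+\infty$.

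The main obstacle I anticipate is justifying the Abelian/Tauberian limits cleanly rather than just formally: in particular, exchanging the $\beta\to\infty$ (resp. $\beta\downarrow\Phi(0)$) limit with the integral, and arguing that the monotonicity of $\sc$ (inherited from \eqref{eq:scale_function} and \eqref{theorem:eq:scalefunction}) legitimizes identifying $\lim_{\beta\downarrow\Phi(0)}\beta\hat{\sc}(\beta)$ with $\sc(+\infty)$. For the $\Phi(0)>0$ case the cleanest route is to avoid Tauberian subtleties entirely and observe that a nondecreasing function with $\sc(+\infty)<\infty$ would have a convergent integral $\int_0^\infty e^{-\beta x}\sc(x)\,dx$ bounded by $\sc(+\infty)/\beta$ uniformly as $\beta\downarrow\Phi(0)$, contradicting the blow-up of the explicit right-hand side; hence $\sc(+\infty)=+\infty$.
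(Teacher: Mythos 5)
Your proposal is correct and takes essentially the same route as the paper: both proofs read $\sc(0)=\lim_{\beta\to\infty}\beta\hat{\sc}(\beta)$ and $\sc(+\infty)=\lim_{\beta\downarrow 0}\beta\hat{\sc}(\beta)$ off the explicit formula \eqref{eq:scale_function_laplace} (the paper justifying these Abelian limits by integration by parts plus the DCT/MCT, you by splitting the integral and using monotonicity of $\sc$, and both handling $\Phi(0)>0$ by the same boundedness contradiction). One minor slip: when $\Phi(0)=0$ the oscillating subcase has $\psi'(0+)=0$, not $\psi'(0+)\in(0,\infty)$ as your parenthetical asserts, but your argument survives since $\psi(\beta)/\beta\to\psi'(0+)=0$ then forces $\beta\hat{\sc}(\beta)\to+\infty$, which is exactly $1/\psi'(0+)$ under the paper's convention $a/0=+\infty$ for $a>0$.
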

\begin{proof}
Integration by parts and the DCT yield  $\sc(0)=\lim_{\beta\to\infty}\beta \hat{\sc}(\beta)$. \eqref{eq:scale_function_laplace} and another application of the DCT then show that $\sc(0)=1/(h\lambda(\{h\}))$. Similarly, integration by parts and the MCT give the identity $\sc(+\infty)=\lim_{\beta\downarrow 0}\beta\hat{\sc}(\beta)$. The conclusion $\sc(+\infty)=1/\psi'(0+)$ is then immediate from \eqref{eq:scale_function_laplace} when $\Phi(0)=0$. If $\Phi(0)>0$, then the right-hand side of \eqref{eq:scale_function_laplace} tends to infinity as $\beta\downarrow \Phi(0)$ and thus, by the MCT, necessarily $\sc(+\infty)=+\infty$. 
\end{proof}

\subsection{The scale functions $\Wq$, $q\geq 0$}

\begin{definition}\label{definition:qscalefncs}
For $q\geq 0$, let $\sc^{(q)}(x):=e^{\Phi(q)\lfloor 1+ x/h\rfloor h}\sc_{\Phi(q)}(x)$ ($x\geq 0$), where $\sc_c$ plays the role of $\sc$ but for the process $(X,\PP_c)$ ($c\geq 0$; see Proposition~\ref{proposition:exponential_change_of_measure}). Note that $\sc^{(0)}=\sc$. When convenient we extend $\sc^{(q)}$ by $0$ on $(-\infty,0)$. 
\end{definition}

\begin{theorem}\label{theorem:qscalefncs}
For each $q\geq 0$, $\sc^{(q)}:[0,\infty)\to [0,\infty)$ is the unique right-continuous and piecewise continuous function of exponential order with Laplace transform: 
\begin{equation}\label{eq:Wq_Laplace}
\widehat{\Wq}(\beta)=\int_0^\infty e^{-\beta x}W^{(q)}(x)dx=\frac{e^{\beta h}-1}{\beta h(\psi(\beta)-q)}\hspace{0.5cm}(\beta>\Phi(q)).
\end{equation} 
Moreover, for all $y\in \Zh^+$ and $x\geq 0$:
\begin{equation}
\EE[e^{-q T_y}\mathbbm{1}_{\{\underline{X}_{T_y}\geq -x\}}]=\frac{\sc^{(q)}(x)}{\sc^{(q)}(x+y)}.
\end{equation}
\end{theorem}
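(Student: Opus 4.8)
The plan is to reduce both assertions to Theorem~\ref{theorem:thescalefnc} applied to the tilted process $(X,\PP_{\Phi(q)})$, exploiting the fact that $\sc^{(q)}$ is built out of $\sc_{\Phi(q)}$ via Definition~\ref{definition:qscalefncs}. By Proposition~\ref{proposition:exponential_change_of_measure}, under $\PP_{\Phi(q)}$ the process $X$ is again an upwards skip-free L\'evy chain, now with Laplace exponent $\psi_{\Phi(q)}(\beta)=\psi(\beta+\Phi(q))-q$ (using $\psi(\Phi(q))=q$). Its largest nonnegative root is $0$: since $\psi$ is strictly increasing on $[\Phi(q),\infty)$ with $\psi(\Phi(q))=q$, the only $\beta\ge 0$ with $\psi(\beta+\Phi(q))=q$ is $\beta=0$. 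Hence Theorem~\ref{theorem:thescalefnc} (i.e. \eqref{eq:scale_function_laplace}) applies to $(X,\PP_{\Phi(q)})$ and gives, for $\beta>0$,
\[
\int_0^\infty e^{-\beta x}\sc_{\Phi(q)}(x)\,dx=\frac{e^{\beta h}-1}{\beta h(\psi(\beta+\Phi(q))-q)}.
\]

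For the Laplace transform \eqref{eq:Wq_Laplace}, I would compute $\widehat{\Wq}$ directly from $\sc^{(q)}(x)=e^{\Phi(q)\lfloor 1+x/h\rfloor h}\sc_{\Phi(q)}(x)$, exactly as in the drift-to-$-\infty$ computation concluding the proof of Theorem~\ref{theorem:thescalefnc}. Since $\sc_{\Phi(q)}$ is piecewise constant on the intervals $[kh,(k+1)h)$, splitting the integral over these intervals and summing the resulting geometric series expresses $\widehat{\Wq}(\beta)$ as $\hat{\sc}_{\Phi(q)}(\beta-\Phi(q))$ times an explicit prefactor. Using $\frac{e^u-1}{1-e^{-u}}=e^u$ with $u=(\beta-\Phi(q))h$, the prefactor collapses and one obtains precisely the right-hand side of \eqref{eq:Wq_Laplace}, valid for $\beta>\Phi(q)$ (the shift $\beta-\Phi(q)>0$ matching the domain of $\hat{\sc}_{\Phi(q)}$). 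Right-continuity, piecewise continuity and exponential order of $\sc^{(q)}$ are immediate from its definition, and uniqueness follows from injectivity of the Laplace transform, as cited in Theorem~\ref{theorem:thescalefnc}.

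For the exit identity I would treat $q>0$ by the change of measure and defer $q=0$ to \eqref{theorem:eq:scalefunction}, noting $\sc^{(0)}=\sc$. For $q>0$, since $e^{-qT_y}$ vanishes on $\{T_y=\infty\}$, I may freely insert $\mathbbm{1}_{\{T_y<\infty\}}$. On $\{T_y<\infty\}$ the skip-free property gives $X_{T_y}=y$, and $\psi(\Phi(q))=q$, so
\[
e^{-qT_y}\mathbbm{1}_{\{T_y<\infty\}}=e^{-\Phi(q)y}\exp\{\Phi(q)X_{T_y}-\psi(\Phi(q))T_y\}\mathbbm{1}_{\{T_y<\infty\}},
\]
which is $e^{-\Phi(q)y}$ times the Radon--Nikodym density of Proposition~\ref{proposition:exponential_change_of_measure} on $\mathcal{F}_{T_y}'$. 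As $\{\underline{X}_{T_y}\geq -x\}\cap\{T_y<\infty\}\in\mathcal{F}_{T_y}'$, this yields $\EE[e^{-qT_y}\mathbbm{1}_{\{\underline{X}_{T_y}\geq -x\}}]=e^{-\Phi(q)y}\PP_{\Phi(q)}(\underline{X}_{T_y}\geq -x,\,T_y<\infty)$. Because $q>0$ forces $\Phi(q)>\Phi(0)$ and $\psi_{\Phi(q)}'(0+)=\psi'(\Phi(q)+)>0$, the tilted process drifts to $+\infty$, so $T_y<\infty$ holds $\PP_{\Phi(q)}$-a.s. and the indicator may be dropped. Theorem~\ref{theorem:thescalefnc} applied to $(X,\PP_{\Phi(q)})$ then gives $\PP_{\Phi(q)}(\underline{X}_{T_y}\geq -x)=\sc_{\Phi(q)}(x)/\sc_{\Phi(q)}(x+y)$. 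Finally, for $y\in\Zh^+$ one has $\lfloor 1+(x+y)/h\rfloor=\lfloor 1+x/h\rfloor+y/h$, so the defining exponential factors in $\sc^{(q)}$ produce exactly the compensating $e^{-\Phi(q)y}$, whence $\sc^{(q)}(x)/\sc^{(q)}(x+y)=e^{-\Phi(q)y}\sc_{\Phi(q)}(x)/\sc_{\Phi(q)}(x+y)$, closing the identity.

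These are routine tiltings; the only points demanding care are confirming that the largest root of $\psi_{\Phi(q)}$ is $0$ (so that the domain $\beta>\Phi(q)$ in \eqref{eq:Wq_Laplace} is correct) and justifying $T_y<\infty$ almost surely under $\PP_{\Phi(q)}$ for $q>0$, which is what permits the clean application of the change of measure and the removal of the indicator. I expect no deeper obstacle, since all the substance has been transferred to the already-established Theorem~\ref{theorem:thescalefnc}.
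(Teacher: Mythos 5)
Your proposal is correct and follows essentially the same route as the paper: exponentially tilt to $\PP_{\Phi(q)}$, note $X_{T_y}=y$ on $\{T_y<\infty\}$ to convert $\EE[e^{-qT_y}\mathbbm{1}_{\{\underline{X}_{T_y}\geq -x\}}]$ into $e^{-\Phi(q)y}\PP_{\Phi(q)}(\underline{X}_{T_y}\geq -x)$, apply Theorem~\ref{theorem:thescalefnc} under the tilted measure, and obtain \eqref{eq:Wq_Laplace} by the same interval-splitting/geometric-series computation as in the final paragraph of the proof of Theorem~\ref{theorem:thescalefnc}. The details you add (that the largest root of $\psi_{\Phi(q)}$ is $0$, that $T_y<\infty$ holds $\PP_{\Phi(q)}$-a.s.\ for $q>0$, and the floor-function bookkeeping) are exactly the points the paper leaves implicit.
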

\begin{proof}
The claim regarding the Laplace transform follows from Proposition~\ref{proposition:exponential_change_of_measure}, Theorem~\ref{theorem:thescalefnc} and Definition~\ref{definition:qscalefncs} as it did in the case of the scale function $\sc$  (cf. final paragraph of the proof of Theorem~\ref{theorem:thescalefnc}). For the second assertion, let us calculate (moving onto the canonical space $\mathbb{D}_h$ as usual, using Proposition~\ref{proposition:exponential_change_of_measure} and noting that $X_{T_y}=y$ on $\{T_y<\infty\}$):
\begin{eqnarray*}
&&\EE[e^{-q T_y}\mathbbm{1}_{\{\underline{X}_{T_y}\geq -x\}}]=\EE[e^{\Phi(q)X_{T_y}-qT_y}\mathbbm{1}_{\{\underline{X}_{T_y}\geq -x\}}]e^{-\Phi(q)y}=\\
&&e^{-\Phi(q)y}\PP_{\Phi(q)}(\underline{X}_{T_y}\geq -x)=e^{-\Phi(q)y}\frac{\sc_{\Phi(q)}(x)}{\sc_{\Phi(q)}(x+y)}=\frac{\sc^{(q)}(x)}{\sc^{(q)}(x+y)}.
\end{eqnarray*}
\end{proof}

\begin{proposition}\label{porosition:Wq0} 
For all $q>0$: $\Wq(0)=1/(h\lambda(\{h\}))$ and $\Wq(+\infty)=+\infty$.
\end{proposition}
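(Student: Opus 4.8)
The plan is to deduce both identities directly from Definition~\ref{definition:qscalefncs}, which expresses $\Wq(x)=e^{\Phi(q)\lfloor 1+x/h\rfloor h}\sc_{\Phi(q)}(x)$, by transferring the assertions of Proposition~\ref{proposition:scale0andINF} to the exponentially tilted process $(X,\PP_{\Phi(q)})$. The key preliminary observation is that for $q>0$ one has $\Phi(q)>\Phi(0)$, since $\psi(\Phi(q))=q>0=\psi(\Phi(0))$ and $\psi$ is strictly increasing on $[\Phi(0),\infty)$. Consequently $(X,\PP_{\Phi(q)})$ drifts to $+\infty$: by Proposition~\ref{proposition:exponential_change_of_measure} its Laplace exponent is $\psi_{\Phi(q)}=\psi(\cdot+\Phi(q))-\psi(\Phi(q))$, whose right derivative at $0$ equals $\psi'(\Phi(q)+)>0$ by strict convexity of $\psi$ on $[\Phi(0),\infty)$ (cf. Theorem~\ref{corollary:longtimeandsup}\ref{ref:longtimebehaviour}), and correspondingly its largest root $\Phi_{\Phi(q)}(0)$ is $0$.

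For $\Wq(0)$, I would evaluate the definition at $x=0$, where $\lfloor 1+0/h\rfloor=1$, giving $\Wq(0)=e^{\Phi(q)h}\sc_{\Phi(q)}(0)$. Proposition~\ref{proposition:scale0andINF}, applied to $(X,\PP_{\Phi(q)})$, yields $\sc_{\Phi(q)}(0)=1/(h\lambda_{\Phi(q)}(\{h\}))$, while Proposition~\ref{proposition:exponential_change_of_measure} gives $\lambda_{\Phi(q)}(\{h\})=e^{\Phi(q)h}\lambda(\{h\})$. The two exponentials cancel and leave $\Wq(0)=1/(h\lambda(\{h\}))$, exactly as claimed. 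Alternatively, one could obtain the same value via the initial-value theorem $\Wq(0)=\lim_{\beta\to\infty}\beta\widehat{\Wq}(\beta)$ from \eqref{eq:Wq_Laplace}, using $\psi(\beta)\sim\lambda(\{h\})e^{\beta h}$ as $\beta\to\infty$; but the definition-based route is shorter.

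For $\Wq(+\infty)$, since the tilted process drifts to $+\infty$ with $\Phi_{\Phi(q)}(0)=0$, Proposition~\ref{proposition:scale0andINF} gives $\sc_{\Phi(q)}(+\infty)=1/\psi_{\Phi(q)}'(0+)=1/\psi'(\Phi(q)+)\in(0,\infty)$; in particular $\sc_{\Phi(q)}(x)$ converges to a strictly positive finite limit as $x\to\infty$. Because $\Phi(q)>0$, the prefactor $e^{\Phi(q)\lfloor 1+x/h\rfloor h}$ tends to $+\infty$, and so the product $\Wq(x)=e^{\Phi(q)\lfloor 1+x/h\rfloor h}\sc_{\Phi(q)}(x)$ tends to $+\infty$, i.e.\ $\Wq(+\infty)=+\infty$.

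The only point requiring care---the main, and still minor, obstacle---is the verification that $(X,\PP_{\Phi(q)})$ genuinely drifts to $+\infty$, i.e.\ that $\psi'(\Phi(q)+)>0$ and $\Phi_{\Phi(q)}(0)=0$; both rest on the strict inequality $\Phi(q)>\Phi(0)$ together with the strict convexity and monotonicity of $\psi$ on $[\Phi(0),\infty)$ established in Section~\ref{section:Fluctuation_theory}. Everything else is routine bookkeeping with Definition~\ref{definition:qscalefncs} and the cited propositions.
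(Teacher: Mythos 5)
Your proof is correct, but it takes a genuinely different route from the paper's. The paper's own proof is a two-line Laplace-transform argument mirroring Proposition~\ref{proposition:scale0andINF}: integration by parts and the DCT give $\Wq(0)=\lim_{\beta\to\infty}\beta\widehat{\Wq}(\beta)=1/(h\lambda(\{h\}))$ directly from \eqref{eq:Wq_Laplace}, and since $\Phi(q)>0$ the right-hand side of \eqref{eq:Wq_Laplace} blows up as $\beta\downarrow\Phi(q)$, which by the MCT forces $\Wq(+\infty)=+\infty$ (a bounded $\Wq$ would keep the transform bounded near $\Phi(q)>0$). You instead unwind Definition~\ref{definition:qscalefncs} and transfer Proposition~\ref{proposition:scale0andINF} through the exponential tilting: this requires the extra preliminary that $(X,\PP_{\Phi(q)})$ drifts to $+\infty$ with $\Phi_{\Phi(q)}(0)=0$ (which, note, you could have cited directly from Proposition~\ref{proposition:exponential_change_of_measure}, since $\Phi(q)>0\geq \Phi(0)$ excludes the ``unless'' clause there, rather than rederiving it from strict convexity), plus the Radon--Nikodym identity $\lambda_{\Phi(q)}(\{h\})=e^{\Phi(q)h}\lambda(\{h\})$ so that the prefactor $e^{\Phi(q)h}$ cancels in the computation of $\Wq(0)$. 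What your approach buys is structural insight: it makes transparent \emph{why} the value $\Wq(0)=1/(h\lambda(\{h\}))$ is independent of $q$ (tilting rescales the atom at $h$ by exactly the factor that the definition of $\Wq$ divides out) and why $\Wq(+\infty)=+\infty$ (a convergent tilted scale function multiplied by an exploding exponential). What the paper's approach buys is brevity and uniformity with Proposition~\ref{proposition:scale0andINF}, avoiding any appeal to the change of measure or to the drift behaviour of the tilted process. One cosmetic remark: for the second claim you do not actually need $\sc_{\Phi(q)}(+\infty)$ to be finite; the cruder bound $\sc_{\Phi(q)}(x)\geq\sc_{\Phi(q)}(0)>0$ already gives $\Wq(x)\geq e^{\Phi(q)\lfloor 1+x/h\rfloor h}\sc_{\Phi(q)}(0)\to+\infty$, so the drift-to-$+\infty$ verification can be dispensed with entirely for that half of the statement.
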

\begin{proof}
As in Proposition~\ref{proposition:scale0andINF}, $\Wq(0)=\lim_{\beta\to\infty}\beta \widehat{\Wq}(\beta)=1/(h\lambda(\{h\}))$. Since $\Phi(q)>0$, $\Wq(+\infty)=+\infty$ also follows at once from the expression for $\widehat{\Wq}$. 
\end{proof}
Moreover: 

\begin{proposition}
For $q\geq 0$:
\begin{enumerate}[(i)]
\item If $\Phi(q)>0$ or $\psi'(0+)>0$, then $\lim_{x\to\infty}\Wq(x)e^{-\Phi(q)\lfloor 1+ x/h\rfloor h}=1/\psi'(\Phi(q))$. 
\item If $\Phi(q)=\psi'(0+)=0$ (hence $q=0$), then $\Wq(+\infty)=+\infty$, but $\limsup_{x\to\infty}\Wq(x)/x<\infty$. Indeed, $\lim_{x\to\infty}\Wq(x)/x=2/m_2$, if $m_2:=\int y^2\measure(dy)<\infty$ and $\lim_{x\to\infty}\Wq(x)/x=0$, if $m_2=\infty$.
\end{enumerate}
\end{proposition}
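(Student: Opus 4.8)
The plan is to treat the two regimes separately, in both cases reducing everything to the explicit Laplace transform \eqref{eq:Wq_Laplace} together with the local behaviour of $\psi$, routed through the change of measure of Definition~\ref{definition:qscalefncs}.

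For (i), I would first record that the hypothesis $\Phi(q)>0$ or $\psi'(0+)>0$ forces $\psi'(\Phi(q))\in(0,\infty)$: if $\psi'(0+)>0$ then $\Phi(0)=0$ and $\psi'(\Phi(q))\geq\psi'(0+)>0$ by monotonicity of $\psi'$, while if $\Phi(q)>0$ then $\Phi(q)>\Phi(0)$ and strict convexity of $\psi$ on $[\Phi(0),\infty)$ gives $\psi'(\Phi(q))>\psi'(\Phi(0)+)\geq 0$. By Definition~\ref{definition:qscalefncs}, $\Wq(x)e^{-\Phi(q)\lfloor 1+x/h\rfloor h}=\sc_{\Phi(q)}(x)$ for all $x\geq 0$, so the asserted limit is nothing but $\sc_{\Phi(q)}(+\infty)$. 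Now $(X,\PP_{\Phi(q)})$ has Laplace exponent $\psi_{\Phi(q)}=\psi(\cdot+\Phi(q))-q$ (Proposition~\ref{proposition:exponential_change_of_measure}), whose largest nonnegative root is $0$ and whose right derivative there is $\psi_{\Phi(q)}'(0+)=\psi'(\Phi(q))>0$; hence this process does not drift to $-\infty$, i.e. its own $\Phi(0)$ equals $0$. Applying Proposition~\ref{proposition:scale0andINF} to $(X,\PP_{\Phi(q)})$ then yields $\sc_{\Phi(q)}(+\infty)=1/\psi_{\Phi(q)}'(0+)=1/\psi'(\Phi(q))$, which is the claim.

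For (ii) we have $q=0$, $\Phi(0)=0$ and $\psi'(0+)=0$, so $\Wq=\sc$ and $X$ oscillates. That $\sc(+\infty)=+\infty$ is immediate from Proposition~\ref{proposition:scale0andINF} (with the convention $1/0=+\infty$). For the growth rate I would pass to the Laplace--Stieltjes transform of the nondecreasing function $\sc$, namely $\beta\hat{\sc}(\beta)=(e^{\beta h}-1)/(h\psi(\beta))$ from \eqref{eq:scale_function_laplace}, and study it as $\beta\downarrow 0$. The key analytic input is the behaviour of $\psi(\beta)/\beta^2$: writing $\psi(\beta)=\int(e^{\beta x}-1-\beta x)\measure(dx)$ (legitimate since $\psi'(0+)=\int x\,\measure(dx)=0$), a second-order Taylor expansion gives $\psi(\beta)/\beta^2\to m_2/2$ when $m_2<\infty$, whereas Fatou's lemma gives $\psi(\beta)/\beta^2\to+\infty$ when $m_2=\infty$. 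Consequently $\beta^2\hat{\sc}(\beta)=\beta(e^{\beta h}-1)/(h\psi(\beta))\to 2/m_2$ (interpreted as $0$ when $m_2=\infty$).

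From here the two sub-cases diverge. When $m_2<\infty$ the transform satisfies $\beta\hat{\sc}(\beta)\sim(2/m_2)\beta^{-1}$, so Karamata's Tauberian theorem applied to the nondecreasing $\sc$ (index $\rho=1$) yields $\sc(x)\sim(2/m_2)x$, i.e. $\sc(x)/x\to 2/m_2$. When $m_2=\infty$ I would sidestep the degenerate Tauberian statement and argue directly from monotonicity: for $\beta>0$, $\hat{\sc}(\beta)\geq \sc(1/\beta)\int_{1/\beta}^{2/\beta}e^{-\beta t}dt=\sc(1/\beta)(e^{-1}-e^{-2})/\beta$, whence $\sc(x)/x\leq \beta^2\hat{\sc}(\beta)/(e^{-1}-e^{-2})$ with $\beta=1/x$; letting $x\to\infty$ and invoking $\beta^2\hat{\sc}(\beta)\to 0$ forces $\sc(x)/x\to 0$. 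This same elementary bound establishes $\limsup_{x\to\infty}\sc(x)/x<\infty$ in every case. I expect the only genuine subtlety to be the $m_2=\infty$ regime: cleanly justifying $\psi(\beta)/\beta^2\to\infty$ (handled by Fatou) and circumventing the boundary case of the Tauberian theorem (handled by the monotonicity estimate above).
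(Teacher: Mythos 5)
Your proof follows essentially the same route as the paper's, but there is one step in part (i) that fails in a sub-case, and one genuine (if small) difference in part (ii) worth recording. For (i), the paper simply declares the claim immediate from Proposition~\ref{proposition:scale0andINF}, Definition~\ref{definition:qscalefncs} and Proposition~\ref{proposition:exponential_change_of_measure}; the chain you write out --- $\Wq(x)e^{-\Phi(q)\lfloor 1+x/h\rfloor h}=\sc_{\Phi(q)}(x)$, the Laplace exponent $\psi_{\Phi(q)}=\psi(\cdot+\Phi(q))-q$ of the tilted process has largest nonnegative root $0$ with $\psi_{\Phi(q)}'(0+)=\psi'(\Phi(q))>0$, then apply Proposition~\ref{proposition:scale0andINF} under $\PP_{\Phi(q)}$ --- is exactly that argument, spelled out. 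For (ii), both you and the paper reduce everything to $\lim_{\beta\downarrow 0}\beta^2/\psi(\beta)=2/m_2\in[0,\infty)$ followed by a Tauberian step, but the execution differs: the paper obtains the limit of $\psi(\beta)/\beta^2$ in both sub-cases at once by the MCT (using that $u\mapsto(e^{-u}+u-1)/u^2$ is nonincreasing), where you use the DCT/Taylor when $m_2<\infty$ and Fatou when $m_2=\infty$; and the paper quotes the Karamata Tauberian Theorem with $\rho=1$ for both sub-cases, the case $m_2=\infty$ resting on the convention that $\sc(x)\sim cx$ with $c=0$ is read as $\sc(x)=o(x)$, whereas you invoke Karamata only when $m_2<\infty$ and dispose of the degenerate case (and of the $\limsup$ claim) via the elementary monotonicity bound $\sc(x)/x\leq\beta^2\hat{\sc}(\beta)/(e^{-1}-e^{-2})$ with $\beta=1/x$. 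Your treatment of the boundary case is slightly more self-contained --- it is in effect the easy direction of Karamata for nondecreasing functions --- and avoids leaning on the $c=0$ reading of the cited theorem.

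The gap: in part (i) you assert that $\Phi(q)>0$ implies $\Phi(q)>\Phi(0)$, and then conclude $\psi'(\Phi(q))>\psi'(\Phi(0)+)\geq 0$. This breaks down precisely when $q=0$ and $\Phi(0)>0$, a case allowed by the hypothesis: there $\Phi(q)=\Phi(0)$, so your strict inequality is vacuous and the argument proves nothing. The conclusion $\psi'(\Phi(q))>0$ is nevertheless true, and a clean argument covering every case with $\Phi(q)>0$ is: if $\psi'(\Phi(q))\leq 0$, then, $\psi'$ being strictly increasing on $(0,\infty)$ by strict convexity, $\psi$ would be strictly decreasing on $[0,\Phi(q)]$, whence $q=\psi(\Phi(q))<\psi(0)=0$, contradicting $q\geq 0$. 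With that one-line repair, part (i) is complete, and the rest of your proof stands.
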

\begin{proof}
The first claim is immediate from Proposition~\ref{proposition:scale0andINF}, Definition~\ref{definition:qscalefncs} and Proposition~\ref{proposition:exponential_change_of_measure}. To handle the second claim, let us calculate, for the Laplace transform $\widehat{dW}$ of the measure $dW$, the quantity (using integration by parts, Theorem~\ref{theorem:thescalefnc} and the fact that (since $\psi'(0+)=0$) $\int y\measure(dy)=0$):
\begin{equation*}
\lim_{\beta\downarrow 0}\beta\widehat{dW}(\beta)=\lim_{\beta\downarrow 0}\frac{\beta^2}{\psi(\beta)}=\frac{2}{m_2}\in [0,+\infty).
\end{equation*}
For: 
\begin{equation*}
\lim_{\beta\downarrow 0}\int (e^{\beta y}-1)\measure(dy)/\beta^2=\lim_{\beta\downarrow 0}\int \frac{e^{\beta y}-\beta y-1}{\beta^2y^2}y^2\measure(dy)=\frac{m_2}{2},
\end{equation*}
by the MCT, since $(u\mapsto \frac{e^{-u}+u-1}{u^2})$ is nonincreasing on $(0,\infty)$ (the latter can be checked by comparing derivatives). The claim then follows by the Karamata Tauberian Theorem \cite[p. 37, Theorem 1.7.1 with $\rho=1$]{bgt}.
\end{proof}
\subsection{The functions $Z^{(q)}$, $q\geq 0$}

\begin{definition}
For each $q\geq 0$, let $\Zq(x):=1+q\int_0^{\lfloor x/h\rfloor h}\Wq(z)dz$ ($x\geq 0$). When convenient we extend these functions by $1$ on $(-\infty,0)$. 
\end{definition}

\begin{definition}
For $x\geq 0$, let $T_x^-:=\inf\{t\geq 0:X_t<-x\}$. 
\end{definition}

\begin{proposition}\label{proposition:a_measure_equality}
In the sense of measures on the real line, for every $q>0$: 
\begin{equation*}
\PP_{-\underline{X}_{e_q}}=\frac{qh}{e^{\Phi(q)h}-1}d\Wq-q\Wq(\cdot-h)\cdot \Delta,
\end{equation*}
where $\Delta:=h\sum_{k=1}^\infty\delta_{kh}$ is the normalized counting measure on $\Zh^{++}\subset \mathbb{R}$, $\PP_{-\underline{X}_{e_q}}$ is the law of $-\underline{X}_{e_q}$ under $\PP$, and $(\Wq(\cdot-h)\cdot \Delta)(A)=\int_A\Wq(y-h)\Delta(dy)$ for Borel subsets $A$ of $\mathbb{R}$. 
\end{proposition}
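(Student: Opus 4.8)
The plan is to identify both sides through their Laplace transforms. Since $X$ is $\Zh$-valued, $-\underline{X}_{e_q}$ is a nonnegative, in fact $\Zh^+$-valued, random variable, so its law $\PP_{-\underline{X}_{e_q}}$ is a purely atomic probability measure on $[0,\infty)$ whose Laplace transform is $\beta\mapsto\EE[e^{\beta\underline{X}_{e_q}}]$, available in closed form from \eqref{eq:running_infimum_ep} (with $p$ replaced by $q$). The obstacle to a naive ``take Laplace transforms and compare'' is that, because $\Wq(+\infty)=+\infty$ (Proposition~\ref{porosition:Wq0}), both $d\Wq$ and $\Wq(\cdot-h)\cdot\Delta$ are infinite positive measures, so the right-hand side is a difference of infinite measures and may not be handled as a finite signed measure. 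I would circumvent this by rewriting the asserted identity in the equivalent form
\[
\PP_{-\underline{X}_{e_q}}+q\,\Wq(\cdot-h)\cdot\Delta=\frac{qh}{e^{\Phi(q)h}-1}\,d\Wq,
\]
an equality between two $\sigma$-finite \emph{positive} measures, and then appeal to uniqueness of the Laplace transform for positive measures finite on a half-line (the same uniqueness already invoked for Theorem~\ref{theorem:thescalefnc}, cf.\ \cite{engelberg}).

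Next I would compute, for $\beta>\Phi(q)$ (where everything converges absolutely, $\Wq$ being of exponential order $\Phi(q)$), the Laplace transforms of the two building blocks using \eqref{eq:Wq_Laplace}. For the Stieltjes measure, integration by parts gives $\int_{[0,\infty)}e^{-\beta x}d\Wq(x)=\beta\widehat{\Wq}(\beta)=\frac{e^{\beta h}-1}{h(\psi(\beta)-q)}$, exactly as in the final paragraph of the proof of Theorem~\ref{theorem:thescalefnc}. For the atomic part, since $\Wq$ is right-continuous and piecewise constant with value $\Wq(kh)$ on $[kh,(k+1)h)$, one has $\widehat{\Wq}(\beta)=\frac{1-e^{-\beta h}}{\beta}\sum_{k\geq0}e^{-\beta kh}\Wq(kh)$, whence a shift of summation index yields $\int e^{-\beta x}\Wq(x-h)\Delta(dx)=he^{-\beta h}\sum_{k\geq0}e^{-\beta kh}\Wq(kh)=\frac{1}{\psi(\beta)-q}$.

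Assembling these, the Laplace transform of the right-hand side of the rewritten identity is $\frac{qh}{e^{\Phi(q)h}-1}\cdot\frac{e^{\beta h}-1}{h(\psi(\beta)-q)}=\frac{q(e^{\beta h}-1)}{(e^{\Phi(q)h}-1)(\psi(\beta)-q)}$, while that of the left-hand side is $\EE[e^{\beta\underline{X}_{e_q}}]+\frac{q}{\psi(\beta)-q}$. Using \eqref{eq:running_infimum_ep} together with the elementary rewriting $\frac{1-e^{(\beta-\Phi(q))h}}{1-e^{-\Phi(q)h}}=\frac{e^{\Phi(q)h}-e^{\beta h}}{e^{\Phi(q)h}-1}$ (multiply numerator and denominator by $e^{\Phi(q)h}$), a one-line simplification, combining two fractions over the common denominator $q-\psi(\beta)$, shows the two expressions coincide for all $\beta>\Phi(q)$. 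By the cited uniqueness the two positive measures agree, which is precisely the assertion.

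The individual computations are routine; the one genuine point requiring care is the reduction to positive measures, since the two terms on the right are separately infinite and the comparison of Laplace transforms becomes licit only once both sides have been arranged as bona fide positive measures with Laplace transforms convergent on $(\Phi(q),\infty)$. A sanity check is automatic here: the verified identity forces the right-hand side to be the purely atomic probability law of $-\underline{X}_{e_q}$, consistent with $d\Wq$ and $\Wq(\cdot-h)\cdot\Delta$ both being supported on $\Zh^+$.
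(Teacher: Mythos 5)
Your proposal is correct and takes essentially the same route as the paper: the paper likewise verifies the identity by matching Laplace transforms, using \eqref{eq:running_infimum_ep}, integration by parts for $\int e^{-\beta x}d\Wq(x)=\beta\widehat{\Wq}(\beta)$, and the transform of $\Wq(\cdot-h)\cdot\Delta$ (extracted there from the computation of $\widehat{\Zq}$), reducing everything to the same one-line algebraic identity you obtain. Your explicit rearrangement of the claim into an equality of two $\sigma$-finite \emph{positive} measures before invoking uniqueness is a careful touch that the paper's proof leaves implicit, but it is a refinement of, not a departure from, the paper's argument.
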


\begin{theorem}\label{theorem:Zqs}
For each $x\geq 0$, 
\begin{equation}
\EE[e^{-qT_x^-}\mathbbm{1}_{\{T_x^-<\infty\}}]=\Zq(x)-\frac{qh}{e^{\Phi(q)h}-1}\Wq(x)
\end{equation}
when $q>0$, and $\PP(T_x^-<\infty)=1-\sc(x)/\sc(+\infty)$. The Laplace transform of $\Zq$, $q\geq 0$, is given by: 
\begin{equation}\label{eq:Zq_Laplace}
\widehat{\Zq}(\beta)=\int_0^\infty \Zq(x)e^{-\beta x}dx=\frac{1}{\beta}\left(1+\frac{q}{\psi(\beta)-q}\right), \hspace{0.5cm} (\beta>\Phi(q)).
\end{equation}
\end{theorem}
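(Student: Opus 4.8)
The plan is to derive all three assertions from the explicit law of $-\underline{X}_{e_q}$ supplied by Proposition~\ref{proposition:a_measure_equality}, together with the memorylessness of the independent exponential clock $e_q$. The key reduction is the pathwise identity $\{T_x^-\le e_q\}=\{\underline{X}_{e_q}<-x\}$, which holds up to a $\PP$-null set: since $X$ is compound Poisson its infimum on $[0,e_q]$ is attained at a jump time, and $\PP(T_x^-=e_q)=0$. Conditioning on $\mathcal{F}_{T_x^-}$ and using $e_q\sim\Exp(q)\perp X$, so that $\PP(e_q>t)=e^{-qt}$, I would obtain
\[
\EE[e^{-qT_x^-}\mathbbm{1}_{\{T_x^-<\infty\}}]=\PP(T_x^-<e_q)=\PP(-\underline{X}_{e_q}>x),
\]
reducing the first claim to the evaluation of a tail probability.

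To evaluate that tail I would integrate the representation of Proposition~\ref{proposition:a_measure_equality}. Since $\Wq(+\infty)=+\infty$ for $q>0$ (Proposition~\ref{porosition:Wq0}), that representation is a difference of two infinite measures, so I would compute $\PP(-\underline{X}_{e_q}\le x)$ over the bounded set $[0,x]$ and then complement. Here $\int_{[0,x]}d\Wq=\Wq(x)$ (recalling $\Wq\equiv 0$ on $(-\infty,0)$, so there is no mass below $0$ but the atom of $d\Wq$ at $0$ is retained), whereas the atomic part contributes $\int_{[0,x]}\Wq(y-h)\Delta(dy)=h\sum_{k=1}^{\lfloor x/h\rfloor}\Wq((k-1)h)$. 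The decisive simplification is that $\Wq$ is piecewise constant (immediate from Definition~\ref{definition:qscalefncs}), so this sum is a left Riemann sum that is in fact exact: $h\sum_{k=1}^{\lfloor x/h\rfloor}\Wq((k-1)h)=\int_0^{\lfloor x/h\rfloor h}\Wq(z)dz=(\Zq(x)-1)/q$. Substituting and taking complements yields exactly $\Zq(x)-\frac{qh}{e^{\Phi(q)h}-1}\Wq(x)$.

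For the Laplace transform \eqref{eq:Zq_Laplace}, write $\Zq=1+qG$ with $G(x):=\int_0^{\lfloor x/h\rfloor h}\Wq(z)dz$, and integrate by parts to get $\int_0^\infty e^{-\beta x}G(x)dx=\frac1\beta\int_{[0,\infty)}e^{-\beta x}dG(x)$ (as $G(0)=0$). The atoms of $dG$ sit at the points $kh$ with masses $h\Wq((k-1)h)$, so again using piecewise constancy,
\[
\int_{[0,\infty)}e^{-\beta x}dG(x)=he^{-\beta h}\sum_{j\ge 0}\Wq(jh)e^{-\beta jh}=\frac{h\beta\,\widehat{\Wq}(\beta)}{e^{\beta h}-1},
\]
where the last equality recognizes $\widehat{\Wq}(\beta)=\frac{1-e^{-\beta h}}{\beta}\sum_{j\ge 0}\Wq(jh)e^{-\beta jh}$. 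Hence $\widehat{\Zq}(\beta)=\frac1\beta+\frac{qh\,\widehat{\Wq}(\beta)}{e^{\beta h}-1}$, and inserting the value of $\widehat{\Wq}$ from Theorem~\ref{theorem:qscalefncs} collapses this to $\frac1\beta\bigl(1+\frac{q}{\psi(\beta)-q}\bigr)$.

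Finally, for $\PP(T_x^-<\infty)=1-\sc(x)/\sc(+\infty)$ I would use $\{T_x^-<\infty\}=\{\underline{X}_\infty<-x\}$. When $X$ drifts to $+\infty$ this equals $1-\PP(-\underline{X}_\infty\le x)=1-\sc(x)/\sc(+\infty)$ via the relation $\sc(x)=\sc(+\infty)\PP(-\underline{X}_\infty\le x)$ from the proof of Theorem~\ref{theorem:thescalefnc}; in the oscillating and downward-drifting cases $\underline{X}_\infty=-\infty$ a.s., so the left side is $1$, matching because $\sc(+\infty)=+\infty$ there by Proposition~\ref{proposition:scale0andINF} (alternatively one lets $q\downarrow 0$ in the first identity). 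I expect the main obstacle to be the bookkeeping of the second paragraph: the signed representation forces integration over $[0,x]$ rather than $(x,\infty)$, the atom of $d\Wq$ at $0$ must be tracked, and the shifted discrete sum against $\Delta$ must be converted into $\int_0^{\lfloor x/h\rfloor h}\Wq$ without off-by-one slips in the floor functions.
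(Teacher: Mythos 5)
Your proposal is correct and follows essentially the same route as the paper: the paper likewise derives \eqref{eq:Zq_Laplace} by integration by parts together with the piecewise constancy of $\Wq$, obtains the $q>0$ identity by evaluating the measure identity of Proposition~\ref{proposition:a_measure_equality} on $[0,x]$ and complementing, and gets the $q=0$ case by passing to the limit $y\to\infty$ in \eqref{theorem:eq:scalefunction}. The only structural difference is that the paper proves Proposition~\ref{proposition:a_measure_equality} jointly with this theorem (using the just-derived Laplace transform of $\Zq$), whereas you invoke it as a prior result --- which is harmless, since your derivation of \eqref{eq:Zq_Laplace} is independent of that proposition, so no circularity arises.
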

\noindent\emph{Proof of Proposition~\ref{proposition:a_measure_equality} and Theorem~\ref{theorem:Zqs}}. 
First, with regard to the Laplace transform of $\Zq$, we have the following derivation (using integration by parts, for every $\beta>\Phi(q)$):
\begin{eqnarray*}
\int_0^\infty\Zq(x)e^{-\beta x}dx&=&\int_0^\infty\frac{e^{-\beta x}}{\beta}d\Zq(x)=\frac{1}{\beta}\left(1+q\sum_{k=1}^\infty e^{-\beta kh}\Wq((k-1)h)h\right)\\
&=&\frac{1}{\beta}\left(1+\frac{qe^{-\beta h}\beta h}{1-e^{-\beta h}}\sum_{k=1}^\infty \frac{(1-e^{-\beta h})}{\beta}e^{-\beta (k-1)h}\Wq((k-1)h)\right)\\
&=&\frac{1}{\beta}\left(1+q\frac{\beta h}{e^{\beta h}-1}\widehat{\Wq}(\beta)\right)=\frac{1}{\beta}\left(1+\frac{q}{\psi(\beta)-q}\right).
\end{eqnarray*}

Next, to prove Proposition~\ref{proposition:a_measure_equality}, note that it will be sufficient to check the equality of the Laplace transforms \cite[p. 109, Theorem 8.4]{bhattacharya}. By what we have just shown, \eqref{eq:running_infimum_ep}, integration by parts, and Theorem~\ref{theorem:qscalefncs}, we need then only establish, for $\beta>\Phi(q)$:
\begin{equation*}
\frac{q}{\psi(\beta)-q}\frac{e^{(\beta-\Phi(q))h}-1}{1-e^{-\Phi(q)h}}=\frac{qh}{e^{\Phi(q)h}-1}\frac{\beta(e^{\beta h}-1)}{(\psi(\beta)-q)\beta h}-\frac{q}{\psi(\beta)-q},
\end{equation*}
which is clear. 

Finally, let $x\in\Zh^+$. For $q>0$, evaluate the measures in Proposition~\ref{proposition:a_measure_equality} at $[0,x]$, to obtain:
\begin{eqnarray*}
\EE[e^{-qT_x^-}\mathbbm{1}_{\{T_x^-<\infty\}}]&=&\PP(e_q\geq T_x^-)=\PP(\underline{X}_{e_q}<-x)=1-\PP(\underline{X}_{e_q}\geq -x)\\
&=&1+q\int_0^x\Wq(z)dz-\frac{qh}{e^{\Phi(q)h}-1}\Wq(x),
\end{eqnarray*}
whence the claim follows. On the other hand, when $q=0$, the following calculation is straightforward: $\PP(T_x^-<\infty)=\PP(\underline{X}_\infty<-x)=1-\PP(\underline{X}_\infty\geq -x)=1-\sc(x)/\sc(+\infty)$ (we have passed to the limit $y\to\infty$ in \eqref{theorem:eq:scalefunction} and used the DCT on the left-hand side of this equality).\qed

\begin{proposition}
Let $q\geq 0$, $x\geq 0$, $y\in \Zh^+$. Then: $$\EE[e^{-qT_x^-}\mathbbm{1}_{\{T_x^-<T_y\}}]=\Zq(x)-\Zq(x+y)\frac{\Wq(x)}{\Wq(x+y)}.$$
\end{proposition}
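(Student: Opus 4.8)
The plan is to reduce the quantity to the one-sided transforms already computed, namely the upward two-sided formula of Theorem~\ref{theorem:qscalefncs} and the downward-passage transform of Theorem~\ref{theorem:Zqs}, by splitting the event $\{T_x^-<\infty\}$ according to whether $X$ first reaches level $y$ or first falls below $-x$, and then applying the strong Markov property at $T_y$. First I would record the elementary identity, valid on $\{T_y<\infty\}$,
$$\{T_y<T_x^-\}=\{\underline{X}_{T_y}\geq -x\},$$
since $\underline{X}_{T_y}\geq -x$ says precisely that $X$ has not dropped below $-x$ by the time it reaches $y$. Because $-x\le 0\le y$ and $X$ is upwards skip-free (so $X_{T_y}=y$ on $\{T_y<\infty\}$), the times $T_y$ and $T_x^-$ are $\PP$-a.s. distinct, so $\{T_x^-<\infty\}$ is partitioned up to a null set by $\{T_x^-<T_y\}$ and $\{T_y<T_x^-\}$, whence
$$\EE[e^{-qT_x^-}\mathbbm{1}_{\{T_x^-<\infty\}}]=\EE[e^{-qT_x^-}\mathbbm{1}_{\{T_x^-<T_y\}}]+\EE[e^{-qT_x^-}\mathbbm{1}_{\{T_y<T_x^-,\,T_x^-<\infty\}}].$$

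The crux is the second term. On $\{T_y<T_x^-\}\subset\{T_y<\infty\}$ we have $X_{T_y}=y$, and the shifted process $\overset{\triangle}{X}:=(X_{T_y+t}-y)_{t\ge0}$ is, by the strong Markov property, an independent copy of $X$; moreover $X_{T_y+t}<-x$ if and only if $\overset{\triangle}{X}_t<-(x+y)$, so that $T_x^--T_y=\inf\{t\ge0:\overset{\triangle}{X}_t<-(x+y)\}$ on this event. As $\{T_y<T_x^-\}$ is $\mathcal{F}_{T_y}$-measurable, factoring out the independent copy and invoking the identity above together with Theorem~\ref{theorem:qscalefncs} (recall $\sc^{(q)}=\Wq$) gives
$$\EE[e^{-qT_x^-}\mathbbm{1}_{\{T_y<T_x^-,\,T_x^-<\infty\}}]=\EE[e^{-qT_y}\mathbbm{1}_{\{\underline{X}_{T_y}\geq -x\}}]\cdot\EE[e^{-qT^-_{x+y}}\mathbbm{1}_{\{T^-_{x+y}<\infty\}}]=\frac{\Wq(x)}{\Wq(x+y)}\,\EE[e^{-qT^-_{x+y}}\mathbbm{1}_{\{T^-_{x+y}<\infty\}}].$$

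Finally I would substitute the closed form of the downward-passage transform. For $q>0$, inserting $\EE[e^{-qT^-_z}\mathbbm{1}_{\{T^-_z<\infty\}}]=\Zq(z)-\frac{qh}{e^{\Phi(q)h}-1}\Wq(z)$ (Theorem~\ref{theorem:Zqs}) at $z=x$ and $z=x+y$ into the rearranged identity
$$\EE[e^{-qT_x^-}\mathbbm{1}_{\{T_x^-<T_y\}}]=\EE[e^{-qT_x^-}\mathbbm{1}_{\{T_x^-<\infty\}}]-\frac{\Wq(x)}{\Wq(x+y)}\EE[e^{-qT^-_{x+y}}\mathbbm{1}_{\{T^-_{x+y}<\infty\}}]$$
makes the two $\frac{qh}{e^{\Phi(q)h}-1}\Wq(x)$ contributions cancel, leaving exactly $\Zq(x)-\Zq(x+y)\Wq(x)/\Wq(x+y)$. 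The case $q=0$ follows from the same decomposition with $Z^{(0)}\equiv1$, using $\PP(T^-_z<\infty)=1-\sc(z)/\sc(+\infty)$ and $\PP(T_y<T_x^-)=\sc(x)/\sc(x+y)$ from Theorems~\ref{theorem:Zqs} and~\ref{theorem:thescalefnc}; the arithmetic again collapses to $1-\sc(x)/\sc(x+y)$. The one point deserving care—the main obstacle—is the bookkeeping of this strong Markov decomposition: verifying that $\{T_y<T_x^-\}$ is $\mathcal{F}_{T_y}$-measurable with $X_{T_y}=y$ there, that $T_x^--T_y$ coincides with the downward passage below $-(x+y)$ of the independent shifted copy, and that the contribution of $\{T_y=\infty\}$ is harmless (it is killed by $e^{-qT_y}$ for $q>0$). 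Once this is in place the concluding cancellation is immediate.
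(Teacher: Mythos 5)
Your proposal is correct and follows essentially the same route as the paper: the same decomposition of $\{T_x^-<\infty\}$ into $\{T_x^-<T_y\}$ and $\{T_y<T_x^-\}$, the same strong Markov factorization at $T_y$ combined with Theorem~\ref{theorem:qscalefncs} and Theorem~\ref{theorem:Zqs}, and the same cancellation of the $\frac{qh}{e^{\Phi(q)h}-1}\Wq(x)$ terms. The only cosmetic difference is at $q=0$, where the paper simply notes that $\Omega\setminus\{T_x^-<T_y\}=\{\underline{X}_{T_y}\geq -x\}$ $\PP$-a.s.\ and cites Theorem~\ref{theorem:thescalefnc} directly, whereas you rerun the decomposition with $Z^{(0)}\equiv 1$; both are fine.
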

\begin{proof}
Observe that $\{T_x^-=T_y\}=\emptyset$, $\PP$-a.s. The case when $q=0$ is immediate and indeed contained in Theorem~\ref{theorem:thescalefnc}, since, $\PP$-a.s., $\Omega\backslash \{T_x^-<T_y\}=\{T_x^-\geq T_y\}=\{\underline{X}_{T_y}\geq -x\}$. For $q>0$ we observe that by the strong Markov property, Theorem~\ref{theorem:qscalefncs} and Theorem~\ref{theorem:Zqs}:
\begin{eqnarray*}
&&\EE[e^{-qT_x^-}\mathbbm{1}_{\{T_x^-<T_y\}}]=\EE[e^{-qT_x^-}\mathbbm{1}_{\{T_x^-<\infty\}}]-\EE[e^{-qT_x^-}\mathbbm{1}_{\{T_y<T_x^-<\infty\}}]\\
&=&\Zq(x)-\frac{qh}{e^{\Phi(q)h}-1}\Wq(x)-\EE[e^{-qT_y}\mathbbm{1}_{\{T_y<T_x^-\}}]\EE[e^{-qT_{x+y}^-}\mathbbm{1}_{\{T_{x+y}^-<\infty\}}]\\
&=&\Zq(x)-\frac{qh}{e^{\Phi(q)h}-1}\Wq(x)-\frac{\Wq(x)}{\Wq(x+y)}\left(\Zq(x+y)-\frac{qh}{e^{\Phi(q)h}-1}\Wq(x+y)\right)\\
&=&\Zq(x)-\Zq(x+y)\frac{\Wq(x)}{\Wq(x+y)}.
\end{eqnarray*}

\end{proof}

\subsection{Calculating scale functions}
In this subsection it will be assumed for notational convenience, but without loss of generality, that $h=1$. 
We define: 
$$\gamma:= \measure(\mathbb{R}),\quad p:=\measure(\{1\})/\gamma,\quad q_k:=\measure(\{-k\})/\gamma,\ k\geq 1.$$
Fix $q\geq 0$. Then denote, provisionally, $e_{m,k}:=\EE[e ^{-q T_k}\mathbbm{1}_{\{\underline{X}_{T_k}\geq -m\}}]$, and $e_k:=e_{0,k}$, where $\{m,k\}\subset \mathbb{N}_0$ and note that, thanks to Theorem~\ref{theorem:qscalefncs},  $e_{m,k}=\frac{e_{m+k}}{e_m}$ for all $\{m,k\}\subset \mathbb{N}_0$. Now, $e_0=1$. Moreover, by the strong Markov property, for each $k\in\mathbb{N}_0$, by conditioning on $\mathcal{F}_{T_k}$ and then on $\mathcal{F}_J$, where $J$ is the time of the first jump after $T_k$ (so that, conditionally on $T_k<\infty$, $J-T_k\sim \Exp(\gamma)$): 
\begin{eqnarray*}
e_{k+1}&=&\EE\Big[e^{-q T_k} \mathbbm{1}_{\{\underline{X}_{T_k}\geq 0\}}e^{-q(J-T_k)}\big(\mathbbm{1}(\text{next jump after }T_k\text{ up})+\\
&& \mathbbm{1}(\text{next jump after }T_k\text{ }1\text{ down, then up }2\text{ before down more than }k-1)+\cdots +\\
&&\mathbbm{1}(\text{next jump after }T_k\text{ }k\text{ down \& then up }k+1\text{ before down more than 0})\big)e^{-q(T_{k+1}-J)}\Big]\\
&=&e_k\frac{\gamma}{\gamma+q}[p+q_1e_{k-1,2}+\cdots +q_ke_{0,k+1}]=e_k\frac{\gamma}{\gamma+q}[p+q_1\frac{e_{k+1}}{e_{k-1}}+\cdots+q_k\frac{e_{k+1}}{e_0}].
\end{eqnarray*}
Upon division by $e_ke_{k+1}$, we obtain: 
\begin{equation*}
\Wq(k)=\frac{\gamma}{\gamma+q}[p\Wq(k+1)+q_1\Wq(k-1)+\cdots+q_k\Wq(0)].
\end{equation*}
Put another way, for all $k\in\mathbb{Z}_+$:
\begin{equation}\label{equation:recursion:Wq}
p\Wq(k+1)=\left(1+\frac{q}{\gamma}\right)\Wq(k)-\sum_{l=1}^kq_l\Wq(k-l).
\end{equation}
Coupled with the initial condition $\Wq(0)=1/(\gamma p)$ (from Proposition~\ref{porosition:Wq0} and Proposition~\ref{proposition:scale0andINF}), this is an explicit recursion scheme by which the values of $\Wq$ obtain (cf. \cite[Section~4, Equations~(6) \& (7)]{vylder} \cite[Section~7, Equations~(7.1) \& (7.5)]{dickson} \cite[p. 255, Proposition~3.1]{marchal}). We can also see the vector $\Wq=(\Wq(k))_{k\in\ZZ}$ as a suitable eigenvector of the transition matrix $P$ associated to the jump chain of $X$. Namely, we have for all $k\in\ZZ_+$: $\left(1+\frac{q}{\gamma}\right)\Wq(k)=\sum_{l\in\ZZ}P_{kl}\Wq(l)$.

Now, with regard to the function $\Zq$, its values can be computed directly from the values of $\Wq$ by a straightforward summation, $\Zq(n)=1+q\sum_{k=0}^{n-1}\Wq(k)$ ($n\in\mathbb{N}_0$). Alternatively, \eqref{equation:recursion:Wq} yields immediately its analogue, valid for each $n\in\ZZ^+$ (make a summation $\sum_{k=0}^{n-1}$ and multiply by $q$, using Fubini's theorem for the last sum):
\begin{equation*}
p\Zq(n+1)-p-p q\Wq(0)=\left(1+\frac{q}{\gamma}\right)(\Zq(n)-1)-\sum_{l=1}^{n-1}q_l(\Zq(n-l)-1),
\end{equation*}
i.e. for all $k\in\ZZ_+$:
\begin{equation}\label{equation:recursion:Zq}
p\Zq(k+1)+\left(1-p-\!\!\sum_{l=1}^{k-1}q_l\right)=\left(1+\frac{q}{\gamma}\right)\Zq(k)-\!\!\sum_{l=1}^{k-1}q_l\Zq(k-l).
\end{equation}
Again this can be seen as an eigenvalue problem. Namely, for all $k\in \ZZ_+$: $\left(1+\frac{q}{\gamma}\right)\Zq(k)=\sum_{l\in\ZZ}P_{kl}\Zq(l)$. In summary:

\begin{proposition}[Calculation of $\Wq$ and $\Zq$]\label{proposition:calculation}
Let $h=1$ and $q\geq 0$. Seen as vectors, $\Wq:=(\Wq(k))_{k\in\ZZ}$ and $\Zq:=(\Zq(k))_{k\in\ZZ}$ satisfy, entry-by-entry ($P$ being the transition matrix associated to the jump chain of $X$; $\lambda_q:=1+q/\lambda(\mathbb{R})$): \begin{equation}\label{eq:eigenvectors}
(P\Wq)\vert_{\ZZ_+}=\lambda_q\Wq\vert_{\ZZ_+}\text{ and }(P\Zq)\vert_{\ZZ_+}=\lambda_q\Zq\vert_{\ZZ_+},
\end{equation}
i.e. \eqref{equation:recursion:Wq} and \eqref{equation:recursion:Zq} hold true for $k\in \mathbb{Z}_+$. Additionally, $\Wq\vert_{\mathbb{Z}^-}=0$ with $\Wq(0)=1/\lambda(\{1\})$, whereas $\Zq\vert_{\mathbb{Z}_-}=1$. 
\end{proposition}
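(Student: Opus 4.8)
The plan is to recognize that this proposition merely repackages the two recursions \eqref{equation:recursion:Wq} and \eqref{equation:recursion:Zq}, already derived above by the strong Markov argument, together with the boundary data, recasting them as eigenvalue relations for the jump-chain transition matrix $P$. First I would record the explicit form of $P$: by the upwards skip-free structure (with $h=1$), from any state $k\in\ZZ$ the jump chain moves to $k+1$ with probability $p=\lambda(\{1\})/\gamma$ and to $k-l$ with probability $q_l=\lambda(\{-l\})/\gamma$ ($l\in\mathbb{N}$), these probabilities summing to $p+\sum_{l=1}^\infty q_l=1$. Hence for any $f\colon\ZZ\to\mathbb{R}$ one has $(Pf)(k)=pf(k+1)+\sum_{l=1}^\infty q_l f(k-l)$.

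For $\Wq$ I would simply insert the convention $\Wq\vert_{\ZZ^-}=0$. For $k\in\ZZ_+$ the series truncates to $\sum_{l=1}^\infty q_l\Wq(k-l)=\sum_{l=1}^k q_l\Wq(k-l)$, so the eigenvalue equation $(P\Wq)(k)=\lambda_q\Wq(k)$ reads $p\Wq(k+1)+\sum_{l=1}^k q_l\Wq(k-l)=(1+q/\gamma)\Wq(k)$, which is exactly \eqref{equation:recursion:Wq} after rearrangement; this yields the first half of \eqref{eq:eigenvectors}. The stated value $\Wq(0)=1/(\gamma p)=1/\lambda(\{1\})$ is then read off from Proposition~\ref{porosition:Wq0} and Proposition~\ref{proposition:scale0andINF}, while $\Wq\vert_{\ZZ^-}=0$ is the adopted extension.

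For $\Zq$ the one step needing care is the tail of the sum under the convention $\Zq\vert_{\ZZ_-}=1$. For $k\in\ZZ_+$ I would split $\sum_{l=1}^\infty q_l\Zq(k-l)=\sum_{l=1}^{k-1}q_l\Zq(k-l)+\sum_{l\geq k}q_l$, using $\Zq(k-l)=1$ whenever $l\geq k$ (since then $k-l\leq 0$ and $\Zq(0)=1$ as well), and then substitute $\sum_{l\geq k}q_l=(1-p)-\sum_{l=1}^{k-1}q_l$. Feeding this into $(P\Zq)(k)=\lambda_q\Zq(k)$ and rearranging produces precisely the inhomogeneous term $1-p-\sum_{l=1}^{k-1}q_l$ of \eqref{equation:recursion:Zq}. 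The boundary value $\Zq\vert_{\ZZ_-}=1$ (and in particular $\Zq(0)=1$) is immediate from the definition $\Zq(n)=1+q\sum_{k=0}^{n-1}\Wq(k)$ together with the extension by $1$.

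Since the recursions themselves were already proven, no further analytic input is required and the argument is purely algebraic bookkeeping. The only mildly delicate point --- the main obstacle, such as it is --- is correctly accounting for the tail mass $\sum_{l\geq k}q_l$ in the $\Zq$ computation, as it is precisely this tail that converts the eigenvalue form into the inhomogeneous recursion \eqref{equation:recursion:Zq}.
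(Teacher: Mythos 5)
Your proposal is correct and follows essentially the same route as the paper: the proposition is stated there as a summary ("In summary:") of the recursions \eqref{equation:recursion:Wq} and \eqref{equation:recursion:Zq} derived just before it via the strong Markov property, with the eigenvalue form \eqref{eq:eigenvectors} obtained exactly as you do, by feeding the conventions $\Wq\vert_{\ZZ^-}=0$, $\Zq\vert_{\ZZ_-}=1$ into $(Pf)(k)=pf(k+1)+\sum_{l\geq 1}q_lf(k-l)$. Your explicit handling of the tail mass $\sum_{l\geq k}q_l=(1-p)-\sum_{l=1}^{k-1}q_l$, which produces the inhomogeneous term in \eqref{equation:recursion:Zq}, is the one step the paper leaves implicit, and you carry it out correctly.
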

An alternative form of recursions \eqref{equation:recursion:Wq} and \eqref{equation:recursion:Zq} is as follows:

\begin{corollary}\label{proposition:calculating_scale_functions}
We have for all $n\in \mathbb{N}_0$:
\begin{equation}\label{equation:recursion:Wq:further}
\Wq(n+1)=\Wq(0)+\sum_{k=1} ^{n+1}\Wq(n+1-k)\frac{q+\lambda(-\infty,-k]}{\lambda(\{1\})},\quad \Wq(0)=1/\lambda(\{1\}),
\end{equation}
and for $\widetilde{\Zq}:=\Zq-1$,
\begin{equation}\label{equation:recursion:Zq:further}
\widetilde{\Zq}(n+1)=(n+1)\frac{q}{\lambda\{1\}}+\sum_{k=1}^n\widetilde{\Zq}(n+1-k)\frac{q+\lambda(-\infty,-k]}{\lambda(\{1\})},\quad \widetilde{\Zq}(0)=0.
\end{equation}
\end{corollary}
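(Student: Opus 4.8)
The plan is to derive both \eqref{equation:recursion:Wq:further} and \eqref{equation:recursion:Zq:further} directly from the recursions \eqref{equation:recursion:Wq} and \eqref{equation:recursion:Zq} already established in Proposition~\ref{proposition:calculation}, by a single algebraic manipulation: summing the one-step recursion over consecutive indices so as to telescope the leading term and re-expose the tail masses $\lambda(-\infty,-k]=\gamma\sum_{l\geq k}q_l$. The key observation is that \eqref{equation:recursion:Wq}, written as $p\Wq(k+1)-\Wq(k)=\frac{q}{\gamma}\Wq(k)-\sum_{l=1}^k q_l\Wq(k-l)$, has a left-hand side whose partial sum $\sum_{k=0}^{n}\bigl(p\Wq(k+1)-\Wq(k)\bigr)$ telescopes up to the factor $p$, so that the recursion can be converted from a forward two-term relation into a closed expression for $\Wq(n+1)$ in terms of \emph{all} earlier values.

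Concretely, for \eqref{equation:recursion:Wq:further} I would first normalize by recalling $p=\lambda(\{1\})/\gamma$, so $\gamma p=\lambda(\{1\})$ and the initial value $\Wq(0)=1/(\gamma p)=1/\lambda(\{1\})$ matches Proposition~\ref{porosition:Wq0}. Then I would sum \eqref{equation:recursion:Wq} over $k=0,\dots,n$. On the right, the term $\frac{q}{\gamma}\sum_{k=0}^n\Wq(k)$ and the double sum $\sum_{k=0}^n\sum_{l=1}^k q_l\Wq(k-l)$ appear; the crucial step is to interchange the order of summation in the double sum via Fubini (finite sums, so no convergence issue) and reindex $j=k-l$, collecting for each fixed $j$ the coefficient $\sum_{l\geq 1,\,l\leq n-j} q_l$, which is a tail sum of the jump probabilities. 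Combining the $\frac{q}{\gamma}$ contribution with this tail sum, and using $\gamma q_l=\lambda(\{-l\})$ together with $q+\lambda(-\infty,-k]=q+\gamma\sum_{l\geq k}q_l$, yields the weight $\frac{q+\lambda(-\infty,-k]}{\lambda(\{1\})}$ attached to $\Wq(n+1-k)$ after dividing through by $p$. Reindexing the summation variable to run $k=1,\dots,n+1$ then produces \eqref{equation:recursion:Wq:further} exactly.

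For \eqref{equation:recursion:Zq:further} the argument is entirely parallel, applied to \eqref{equation:recursion:Zq} with $\widetilde{\Zq}=\Zq-1$, the substitution clearing the constant terms $(1-p-\sum_l q_l)$ in favour of the driving term $(n+1)\frac{q}{\lambda(\{1\})}$; alternatively, since $\widetilde{\Zq}(n)=q\sum_{k=0}^{n-1}\Wq(k)$, one may simply sum the already-proven relation \eqref{equation:recursion:Wq:further} and exchange the order of summation once more. I expect no genuine obstacle here: everything reduces to finite-sum reindexing and bookkeeping of tail masses, and the only point requiring care is the double-sum interchange and the correct alignment of summation ranges so that the tail $\lambda(-\infty,-k]$ (inclusive of $-k$) rather than $\lambda(-\infty,-k)$ emerges; tracking that boundary index is the one place a sign or off-by-one error could slip in, so I would verify it against the base cases $n=0$ in both recursions before declaring the identities proven.
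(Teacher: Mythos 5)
Your proposal is correct and follows essentially the same route as the paper: sum the one-step recursion \eqref{equation:recursion:Wq} over $k=0,\dots,n$, interchange the double sum via Fubini so the partial sums of the $q_l$ combine with the $q/\gamma$ and $(1-p)$ contributions into the tail masses $q+\lambda(-\infty,-k]$, and divide by $p=\lambda(\{1\})/\gamma$. Your second suggested route for \eqref{equation:recursion:Zq:further} (summing the already-proven \eqref{equation:recursion:Wq:further} and interchanging the order of summation, using $\widetilde{\Zq}(n)=q\sum_{k=0}^{n-1}\Wq(k)$) is exactly what the paper does as well.
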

\begin{proof}
Recursion~\eqref{equation:recursion:Wq:further} obtains from \eqref{equation:recursion:Wq} as follows (cf. also \cite[(proof of) Proposition~XVI.1.2]{asmussen}): 
\footnotesize
\begin{eqnarray*}
&&\!\!\!\!\!\!\!\!p\Wq(n+1)+\sum_{k=1}^nq_k\Wq(n-k)=\nu_q\Wq(n),\forall n\in \mathbb{N}_0\Rightarrow\\
&&\!\!\!\!\!\!\!\!p\Wq(k+1)+\sum_{m=0}^{k-1}q_{k-m}\Wq(m)=\nu_q\Wq(k),\forall k\in \mathbb{N}_0\Rightarrow \text{ (making a summation  }\sum_{k=0}^n\text{)}\\
&&\!\!\!\!\!\!\!\!p\sum_{k=0}^n\Wq(k+1)+\sum_{k=0}^n\sum_{m=0}^{k-1}q_{k-m}\Wq(m)=\nu_q\sum_{k=0}^n\Wq(k),\forall n\in \mathbb{N}_0\Rightarrow\text{ (Fubini)}\\
&&\!\!\!\!\!\!\!\!p\Wq(n+1)+p\sum_{k=0}^n\Wq(k)+\sum_{m=0}^{n-1}\Wq(m)\sum_{k=m+1}^{n}q_{k-m}=p\Wq(0)+\nu_q\sum_{k=0}^n\Wq(k),\forall n\in \mathbb{N}_0\Rightarrow \text{ (relabeling)}\\
&&\!\!\!\!\!\!\!\!p\Wq(n+1)+p\sum_{k=0}^n\Wq(k)+\sum_{k=0}^{n-1}\Wq(k)\sum_{l=1}^{n-k}q_{l}=p\Wq(0)+(1+q/\gamma)\sum_{k=0}^n\Wq(k),\forall n\in \mathbb{N}_0\Rightarrow\text{ (rearranging)}\\
&&\!\!\!\!\!\!\!\!\Wq(n+1)=\Wq(0)+\sum_{k=0}^n\Wq(k)\frac{q+\gamma\sum_{l=n-k+1}^{\infty}q_l}{p\gamma}, \forall n\in \mathbb{N}_0\Rightarrow\text{ (relabeling)}\\
&&\!\!\!\!\!\!\!\!\Wq(n+1)=\Wq(0)+\sum_{k=1}^{n+1}\Wq(n+1-k)\frac{q+\gamma\sum_{l=k}^{\infty}q_l}{p\gamma}, \forall n\in \mathbb{N}_0.
\end{eqnarray*}\normalsize
Then \eqref{equation:recursion:Zq:further} follows from \eqref{equation:recursion:Wq:further} by another summation from $n=0$ to $n=w-1$, $w\in \mathbb{N}_0$, say, and an interchange in the order of summation for the final sum. 
\end{proof}
Now, given these explicit recursions for the calculation of the scale functions, searching for those Laplace exponents of upwards skip-free L\'evy chains (equivalently, their descending ladder heights processes, cf. Theorem~\ref{theorem:subordinator}), that allow for an inversion of \eqref{eq:Wq_Laplace} 
in terms of some or another (more or less exotic) \emph{special function}, appears less important. This is in contrast to the spectrally negative case, see e.g. \cite{hubalek}. 

That said, when the scale function(s) can be expressed in terms of \emph{elementary functions}, this is certainly note-worthy. In particular, whenever the support of $\measure$ is bounded from below, then \eqref{equation:recursion:Wq} becomes a homogeneous linear difference equation with constant coefficients of some (finite) order, which can always be solved for explicitly in terms of elementary functions (as long as one has control over the zeros of the characteristic polynomial). The minimal example of this situation is of course when $X$ is skip-free to the left also. For simplicity let us only consider the case $q=0$.
\begin{itemize}
\item \textbf{Skip-free chain}. Fix $p\in (0,1]$, let $\measure=p\delta_1+(1-p)\delta_{-1}$. Then $\sc(k)=\frac{1}{1-2p}\left[\left(\frac{1-p}{p}\right)^{k+1}-1\right]$, unless $p=1/2$, in which case $\sc(k)=2(1+k)$, $k\in \mathbb{N}_0$. 
\end{itemize}

Indeed, if one wanted to, one could in general \emph{reverse-engineer} the L\'evy measure, so that the zeros of the characteristic polynomial of \eqref{equation:recursion:Wq} (with $q=0$) are known \emph{a priori}, as follows. Choose $l\in \mathbb{N}$ as being $-\inf\supp(\measure)$; $p\in (0,1)$ as representing the probability of an up-jump; and then the numbers $\lambda_1$, \ldots, $\lambda_{l+1}$ (real, or not), in such a way that the polynomial (in $x$) $p(x-\lambda_1)\cdots (x-\lambda_{l+1})$ coincides with the characteristic polynomial of  \eqref{equation:recursion:Wq} (for $q=0$): $$px^{l+1}-x^l+q_1x^{l-1}+ \cdots + q_l$$ of \emph{some} upwards skip-free L\'evy chain, which can jump down by at most (and does jump down by) $l$ units (this imposes some set of algebraic restrictions on the elements of $\{\lambda_1,\ldots,\lambda_{l+1}\}$). \emph{A priori} one then has access to the zeros of the characteristic polynomial, and it remains to use the linear recursion in order to determine the first $l+1$ values of $\sc$, thereby finding (via solving a set of linear equations of dimension $l+1$) the sought-after particular solution of  \eqref{equation:recursion:Wq} (with $q=0$), that is $\sc$. A particular parameter set for the zeros is depicted in Figure~\ref{figure:parameter_set_zeros}. 
\begin{figure}
                \includegraphics[width=0.76\textwidth]{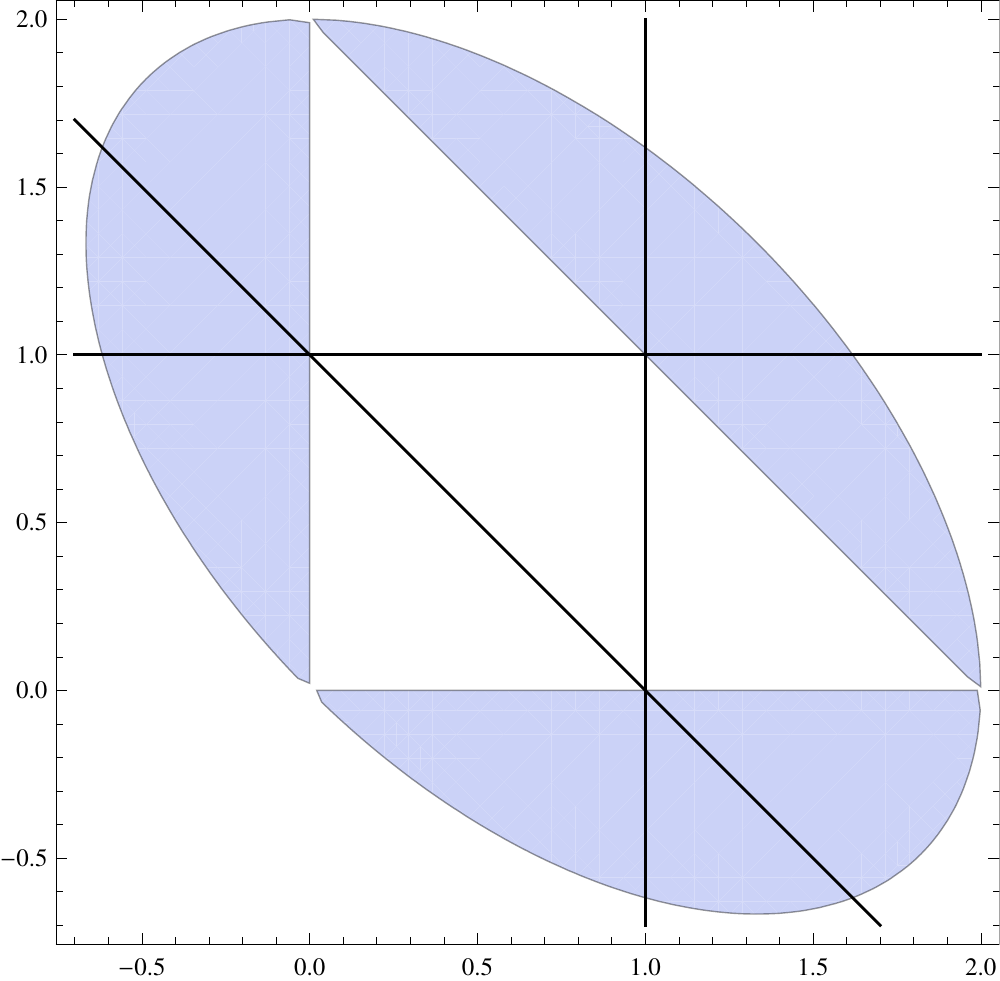}
\caption{Consider the possible zeros $\lambda_1$, $\lambda_2$ and $\lambda_3$ of the characteristic polynomial of \eqref{equation:recursion:Wq} (with $q=0$), when $l:=-\inf \supp(\measure)=2$ and $p=1/2$. Straightforward computation shows they are precisely those that satisfy (o) $\lambda_3=2-\lambda_1-\lambda_2$; (i) $(\lambda_1-1)(\lambda_2-1)(\lambda_1+\lambda_2-1)=0$ and (ii) $\lambda_1\lambda_2+(\lambda_1+\lambda_2)(2-\lambda_1-\lambda_2)\geq 0$ \& $\lambda_1\lambda_2(2-\lambda_1-\lambda_2)<0$. In the plot one has $\lambda_1$ as the abscissa, $\lambda_2$ as the ordinate. The shaded area (an ellipse missing the closed inner triangle) satisfies (ii), the black lines verify (i). Then $q_1=(\lambda_1\lambda_2+(\lambda_1+\lambda_2)(2-\lambda_1-\lambda_2))/2$ and $q_2=(-\lambda_1\lambda_2(2-\lambda_1-\lambda_2))/2$.}\label{figure:parameter_set_zeros}
\end{figure} 

An example in which the support of $\measure$ is not bounded, but one can/could still obtain closed form expressions in terms of elementary functions, is the following.

\begin{itemize}
\item Take $p\in (0,1)$, $q_l=(1-p)(1-a)a^{l-1}$, $l\in \mathbb{N}$, $a\in (0,1)$. Then \eqref{equation:recursion:Wq} implies for $z(k):=W(k)/a^k$: $paz(k+1)=z(k)-\sum_{l=1}^k(1-p)(1-a)z(k-l)/a$, i.e. for $\gamma(k):=\sum_{l=0}^kz(l)$, $pa^2\gamma(k+1)-(a+pa^2)\gamma(k)+(1-p+pa)\gamma(k-1)=0$, a homogeneous second order linear difference equation with constant coefficients.
\end{itemize}
We close with the following remark and corollary (cf. \cite[Equation~(12)]{biffis} and \cite[Remark~5]{avram_pistosrious}, respectively, for their spectrally negative analogues): for them we no longer assume that $h=1$.

\begin{remark}
Let $L$ be the infinitesimal generator \cite[p. 208, Theorem~31.5]{sato} of $X$. It is seen from \eqref{eq:eigenvectors}, that for each $q\geq 0$, $((L-q)\Wq)\vert_{\mathbb{R}_+}=((L-q)\Zq)\vert_{\mathbb{R}_+}=0$. 
\end{remark}

\begin{corollary}
For each $q\geq 0$, the stopped processes $Y$ and $Z$, defined by $Y_t:=e^{-q(t\land T_0^-)}\Wq\circ X_{t\land T_0^-}$ and $Z_t:=e^{-q(t\land T_0^-)}\Wq\circ X_{t\land T_0^-}$, $t\geq 0$, are nonnegative $\PP$-martingales with respect to the natural filtration $\mathbb{F}^X=(\mathcal{F}^X_s)_{s\geq 0}$ of $X$. 
\end{corollary}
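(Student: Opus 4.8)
The plan is to read the Corollary as an instance of Dynkin's formula for the pure-jump process $X$, with the eigen-relation of the preceding Remark supplying the vanishing of the ``drift''. Throughout I set $f:=\Wq$ for the process $Y$ and $f:=\Zq$ for the process $Z$ (so that $Z$ is built from $\Zq$, as the displayed definition evidently intends), both extended below $0$ as in their respective definitions (by $0$ for $\Wq$, by $1$ for $\Zq$). Since $X$ is compound Poisson, its generator acts as $Lg(x)=\int(g(x+y)-g(x))\measure(dy)$, a \emph{finite} sum because $\measure$ is carried by $\Zh$; consequently, for any $g$ of exponential order, the process
\[
M^g_t:=e^{-qt}g(X_t)-g(X_0)-\int_0^t e^{-qs}\big((L-q)g\big)(X_s)\,ds,\qquad t\ge 0,
\]
is a local $\mathbb{F}^X$-martingale. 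This is the compound-Poisson Dynkin identity, verified by writing $e^{-qt}g(X_t)$ as a discounted sum over the (on compacts, finitely many) jumps of $X$ and compensating the jump intensity $\measure$; the exponential order of $g$ together with the exponential moments $\EE[e^{\beta\overline{X}_t}]<\infty$ noted at the start of Section~\ref{section:Fluctuation_theory} makes every term integrable.

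First I would localise the identity at $T_0^-$. For every $s<T_0^-$ one has $X_s\in\Zh^+\subset\mathbb{R}_+$, and the preceding Remark records that $((L-q)\Wq)\vert_{\mathbb{R}_+}=((L-q)\Zq)\vert_{\mathbb{R}_+}=0$; hence $((L-q)f)(X_s)=0$ for all such $s$, the single endpoint $s=t\land T_0^-$ being Lebesgue-null. Therefore the integral term in $M^f_{t\land T_0^-}$ vanishes, and optional stopping of the local martingale $M^f$ at the $\mathbb{F}^X$-stopping time $T_0^-$ yields that
\[
e^{-q(t\land T_0^-)}f\big(X_{t\land T_0^-}\big)-f(0)
\]
is a local $\mathbb{F}^X$-martingale; as $f(0)$ is a constant, so are $Y$ and $Z$ for the respective choices of $f$.

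It remains to promote ``local'' to ``true'' and to record nonnegativity. Nonnegativity is immediate: the exponential factor is positive, $\Wq\ge 0$ and $\Zq\ge 1$. For the martingale property I would dominate uniformly on each compact time interval: since $\Wq$ and $\Zq$ are nondecreasing and $X_{s\land T_0^-}\le \overline{X}_s\le\overline{X}_t$ for $s\le t$, one has $\sup_{s\le t}Y_s\le \Wq(\overline{X}_t)$ and $\sup_{s\le t}Z_s\le \Zq(\overline{X}_t)$, and the right-hand sides are integrable because $\Wq,\Zq$ are of exponential order while $\overline{X}_t$ possesses exponential moments of every order. Thus the stopped local martingale is dominated by an integrable variable on each $[0,t]$, hence is a genuine $\mathbb{F}^X$-martingale by the standard localisation argument.

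I expect the one point needing care to be the alignment between the \emph{analytic} relation $((L-q)f)\vert_{\mathbb{R}_+}=0$ and the \emph{probabilistic} down-jumps that carry $X$ out of $[0,\infty)$: the relation must hold at the boundary state $0$ as well, and it does so precisely because the scale functions are extended by $0$ (resp.\ $1$) on the negative half-line, so that the down-jump contributions which exit $[0,\infty)$ are already encoded in $(L-q)f$ evaluated at $0$. Equivalently, this boundary case is exactly the $k=0$ instance of the eigen-recursion \eqref{eq:eigenvectors}. Everything else --- the Dynkin identity for a finite-activity jump process and the localisation --- is routine given the exponential moments available here.
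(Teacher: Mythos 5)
Your proof is correct, but it takes a genuinely different route from the paper's. You read the statement as ``harmonicity plus optional stopping'': the compensated (Dynkin) martingale $M^f$ for a finite-activity jump process, the vanishing of $(L-q)f$ on $\mathbb{R}_+$ from the preceding Remark (including, as you rightly stress, at the boundary state $0$, where the extensions $\Wq\vert_{(-\infty,0)}=0$ and $\Zq\vert_{(-\infty,0)}=1$ are exactly what make the $k=0$ instance of \eqref{eq:eigenvectors} hold), stopping at $T_0^-$, and an upgrade from local to true martingale by domination --- this last step being the one place where real care is needed, and your argument there is sound, since $\Wq$ and $\Zq$ are nondecreasing and of exponential order while $\overline{X}_t$ has exponential moments of every order. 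The paper instead gives an elementary, self-contained argument that never invokes the generator or any martingale-problem machinery: writing $(H_k)_{k\geq 0}$ for the jump times, it reduces the martingale property to the identity \eqref{eq:WQZQmtgs}, replaces $Y_t$, $Y_s$ there by $Y_{H_k}$, $Y_{H_l}$, reduces via a $\pi$/$\lambda$-argument to events $A\in\mathcal{F}^X_{H_l}$, and concludes from independence of the inter-jump times together with the eigenvector property \eqref{eq:eigenvectors} of $\Wq$, $\Zq$ for the jump-chain transition matrix $P$ that $\EE[Y_{H_k}\vert \mathcal{F}^X_{H_l}\lor\mathcal{H}]=Y_{H_l}$; in particular, the Remark you lean on is, in the paper, a by-product placed alongside the Corollary rather than an ingredient of its proof. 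What your route buys is brevity and conceptual transparency --- the same argument applies verbatim to any function annihilated by $L-q$ on a region, stopped at the exit time of that region --- at the cost of having to justify the Dynkin identity for unbounded (exponential-order) functions and the local-to-true upgrade, which you do; what the paper's route buys is that, working directly with the natural filtration and the discrete jump structure, no integrability or localization issues ever arise. Two cosmetic points: the sum defining $Lg$ is countably infinite, not finite, when $\supp(\measure)$ is unbounded below --- it is absolutely convergent because $\measure$ is finite and the extended functions are bounded on lower half-lines, which is what you in effect use; and you correctly read the paper's evident typo, $Z$ being built from $\Zq$ rather than $\Wq$.
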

\begin{proof}
We argue for the case of the process $Y$, the justification for $Z$ being similar. Let $(H_k)_{k\geq 1}$, $H_0:=0$, be the sequence of jump times of $X$ (where, possibly by discarding a $\PP$-negligible set, we may insist on all of the $T_k$, $k\in \mathbb{N}_0$, being finite and increasing to $+\infty$ as $k\to\infty$). Let $0\leq s<t$, $A\in \mathcal{F}_s^X$. By the MCT it will be sufficient to establish for $\{l,k\}\subset \mathbb{N}_0$, $l\leq k$, that: 
\footnotesize
\begin{equation}\label{eq:WQZQmtgs}
\EE[\mathbbm{1}(H_l\leq s<H_{l+1})\mathbbm{1}_AY_t\mathbbm{1}(H_k\leq t<H_{k+1})]=\EE[\mathbbm{1}(H_l\leq s<H_{l+1})\mathbbm{1}_AY_s\mathbbm{1}(H_k\leq t<H_{k+1})].
\end{equation}
\normalsize
On the left-hand (respectively right-hand) side of \eqref{eq:WQZQmtgs} we may now replace $Y_t$ (respectively $Y_s$) by $Y_{H_k}$ (respectively $Y_{H_l}$) and then harmlessly insist on $l<k$. Moreover, up to a completion, $\mathcal{F}_s^X\subset \sigma((H_m\land s,X(H_m\land s))_{m\geq 0})$. Therefore, by a $\pi$/$\lambda$-argument, we need only verify \eqref{eq:WQZQmtgs} for sets $A$ of the form: $A=\bigcap_{m=1}^{M}\{H_m\land s\in A_m\}\cap\{X(H_m\land s)\in B_m\}$, $A_m$, $B_m$ Borel subsets of $\mathbb{R}$, $1\leq m\leq M$, $M\in \mathbb{N}$. Due to the presence of the indicator $\mathbbm{1}(H_l\leq s<H_{l+1})$, we may also take, without loss of generality, $M=l$ and hence $A\in \mathcal{F}^X_{H_l}$. Further, $\mathcal{H}:=\sigma(H_{l+1}-H_l,H_k-H_l,H_{k+1}-H_l)$ is independent of $\mathcal{F}^X_{H_l}\lor \sigma(Y_{H_k})$ and then $\EE[Y_{H_k}\vert \mathcal{F}^X_{H_l}\lor \mathcal{H}]=\EE[Y_{H_k}\vert \mathcal{F}^X_{H_l}]=Y_{H_l}$, $\PP$-a.s. (as follows at once from \eqref{eq:eigenvectors} of Proposition~\ref{proposition:calculation}), whence \eqref{eq:WQZQmtgs} obtains.
\end{proof}

\bibliographystyle{plain}
\bibliography{Biblio_fluctuation_theory}

\begin{thebibliography}{10}

\bibitem{asmussen}
S.~Asmussen and H.~Albrecher.
\newblock {\em Ruin Probabilities}.
\newblock Advanced series on statistical science and applied probability. World
  Scientific, 2010.

\bibitem{avram_pistosrious}
F.~Avram, A.~E. Kyprianou, and M.~R. Pistorius.
\newblock {Exit Problems for Spectrally Negative L\'evy Processes and
  Applications to (Canadized) Russian Options}.
\newblock {\em The Annals of Applied Probability}, 14(1):215--238, 2004.

\bibitem{bertoin}
J.~Bertoin.
\newblock {\em {L\'e}vy Processes}.
\newblock Cambridge Tracts in Mathematics. Cambridge University Press,
  Cambridge, 1996.

\bibitem{bhattacharya}
R.~N. Bhattacharya and E.~C. Waymire.
\newblock {\em A Basic Course in Probability Theory}.
\newblock Universitext - Springer-Verlag. Springer, 2007.

\bibitem{biffis}
E.~Biffis and A.~E. Kyprianou.
\newblock A note on scale functions and the time value of ruin for {L\'e}vy
  insurance risk processes.
\newblock {\em Insurance: Mathematics and Economics}, 46:85--91, 2010.

\bibitem{bgt}
N.~H. Bingham, C.~M. Goldie, and J.~L. Teugels.
\newblock {\em Regular Variation}.
\newblock Encyclopedia of Mathematics and its Applications. Cambridge
  University Press, 1987.

\bibitem{brown}
M.~Brown, E.~A. Pek{\"o}z, and S.~M. Ross.
\newblock Some results for skip-free random walk.
\newblock {\em Probability in the Engineering and Informational Sciences},
  24:491--507, 2010.

\bibitem{buhlman}
H.~B{\"u}hlmann.
\newblock {\em Mathematical Methods in Risk Theory}.
\newblock Grundlehren der mathematischen Wissenschaft: A series of
  comprehensive studies in mathematics. Springer, 1970.

\bibitem{dickson}
D.~C.~M. Dickson and H.~R. Waters.
\newblock Recursive calculation of survival probabilities.
\newblock {\em ASTIN Bulletin}, 21(2):199--221, 1991.

\bibitem{doney}
R.~A. Doney and J.~Picard.
\newblock {\em Fluctuation theory for {L\'e}vy processes: Ecole d'Et{\'e} de
  Probabilit{\'e}s de Saint-Flour XXXV-2005}.
\newblock Number 1897 in Ecole d'Et{\'e} de Probabilit{\'e}s de Saint-Flour.
  Springer-Verlag, Berlin Heidelberg, 2007.

\bibitem{engelberg}
S.~Engelberg.
\newblock {\em {A Mathematical Introduction to Control Theory, Volume 2}}.
\newblock Series in Electrical And Computer Engineering. Imperial College
  Press, 2005.

\bibitem{hubalek}
F.~Hubalek and A.~E. Kyprianou.
\newblock {Old and New Examples of Scale Functions for Spectrally Negative
  L\'evy Processes}.
\newblock In R.~Dalang, M.~Dozzi, and F.~Russo, editors, {\em {Seminar on
  Stochastic Analysis, Random Fields and Applications VI}}, Progress in
  Probability, pages 119--145. Springer, 2011.
\newblock [Online version; accessed 10 July 2013 at
  \url{http://arxiv.org/pdf/0801.0393v2.pdf}.].

\bibitem{kallenberg}
O.~Kallenberg.
\newblock {\em Foundations of Modern Probability}.
\newblock Probability and Its Applications. Springer, New York Berlin
  Heidelberg, 1997.

\bibitem{karatzas}
I.~Karatzas and S.~E. Shreve.
\newblock {\em Brownian Motion and Stochastic Calculus}.
\newblock Graduate Texts in Mathematics. Springer, 1988.

\bibitem{kyprianou}
A.~E. Kyprianou.
\newblock {\em {Introductory Lectures on Fluctuations of {L\'e}vy Processes
  with Applications}}.
\newblock Springer-Verlag, Berlin Heidelberg, 2006.

\bibitem{marchal}
P.~Marchal.
\newblock {A Combinatorial Approach to the Two-Sided Exit Problem for
  Left-Continuous Random Walks}.
\newblock {\em Combinatorics, Probability and Computing}, 10:251--266, 2001.

\bibitem{vidmar:scales}
A.~Mijatovi{\'c}, M.~Vidmar, and S.~Jacka.
\newblock {Markov chain approximations to scale functions of L\'evy processes}.
\newblock arXiv:1310.1737, 2014.

\bibitem{norris}
J.~R. Norris.
\newblock {\em Markov chains}.
\newblock Cambridge series in statistical and probabilistic mathematics.
  Cambridge University Press, Cambridge, 1997.

\bibitem{parthasarathy}
K.~R. Parthasarathy.
\newblock {\em Probability Measures on Metric Spaces}.
\newblock Academic Press, New York and London, 1967.

\bibitem{quine}
M.~P. Quine.
\newblock On the escape probability for a left or right continuous random walk.
\newblock {\em {Annals of Combinatorics}}, 8:221--223, 2004.

\bibitem{revuzyor}
D.~Revuz and M.~Yor.
\newblock {\em Continuous Martingales and Brownian Motion}.
\newblock Springer-Verlag, Berlin Heidelberg, 1999.

\bibitem{rudin}
W.~Rudin.
\newblock {\em Real and complex analysis}.
\newblock International student edition. McGraw-Hill, 1970.

\bibitem{sato}
K.~I. Sato.
\newblock {\em {L\'e}vy Processes and Infinitely Divisible Distributions}.
\newblock Cambridge studies in advanced mathematics. Cambridge University
  Press, Cambridge, 1999.

\bibitem{spitzer}
F.~Spitzer.
\newblock {\em Principles of Random Walk}.
\newblock Graduate texts in mathematics. Springer, 2001.

\bibitem{vidmar}
M.~Vidmar.
\newblock Non-random overshoots of {L}\'evy processes.
\newblock {\em {Markov Processes and Related Fields}}, 21:39--56, 2015 (to
  appear).

\bibitem{vylder}
F.~De Vylder and M.~J. Goovaerts.
\newblock Recursive calculation of finite-time ruin probabilities.
\newblock {\em Insurance: Mathematics and Economics}, 7(1):1--7, 1988.

\end{thebibliography}

\end{document}